\DeclareTextSymbol{\thh}{T1}{254}
\newtheorem{thm}{Theorem}[section]
\newtheorem{lemma}[thm]{Lemma}
\newtheorem{prop}[thm]{Proposition}
\newtheorem{cor}[thm]{Corollary}
\newtheorem{fact}[thm]{Fact}
\newtheorem{rmk}[thm]{Remark}
\theoremstyle{definition}
\newtheorem{df}[thm]{Definition}
\newtheorem{ex}[thm]{Example}
\newtheorem{question}[thm]{Question}
\newcommand{\BB}{\mathbb{B}}  
\newcommand{\BK}{\mathbb{K}}  
\newcommand{\BN}{\mathbb{N}}
\newcommand{\cu}[1]{\mathcal{#1}}
\newcommand{\bo}[1]{\boldsymbol{#1}}
\newcommand{\ti}[1]{\widetilde{#1}}
\newcommand{\sa}[1]{\mathsf{#1}}
\renewcommand{\hat}{\widehat}
\def\indsym#1#2{%
  \setbox0=\hbox{$\m@th#1x$}%
  \kern\wd0%
  \hbox to 0pt{\hss$\m@th#1\mid$\hbox to 0pt{$\m@th#1^{#2}$}\hss}%
  \lower.9\ht0\hbox to 0pt{\hss$\m@th#1\smile$\hss}%
  \kern\wd0}
\newcommand{\ind}[1][]{\mathop{\mathpalette\indsym{#1}}}
\def\nindsym#1#2{%
  \setbox0=\hbox{$\m@th#1x$}%
  \kern\wd0%
  \hbox to 0pt{\hss$\m@th#1\not$\kern1.4\wd0\hss}
  \hbox to 0pt{\hss$\m@th#1\mid$\hbox to 0pt{$\m@th#1^{\,#2}$}\hss}%
  \lower.9\ht0\hbox to 0pt{\hss$\m@th#1\smile$\hss}%
  \kern\wd0}
\newcommand{\nind}[1][]{\mathop{\mathpalette\nindsym{#1}}}
\def\dotminussym#1#2{%
  \setbox0=\hbox{$\m@th#1-$}%
  \kern.5\wd0%
  \hbox to 0pt{\hss\hbox{$\m@th#1-$}\hss}%
  \raise.6\ht0\hbox to 0pt{\hss$\m@th#1.$\hss}%
  \kern.5\wd0}
\def \<{\langle}
\def \>{\rangle}
\def \((  {(\!(}
\def \)) {)\!)}
\def \cl {\operatorname{cl}}
\def \tp{\operatorname{tp}}
\def \acl{\operatorname{acl}}
\def \dcl{\operatorname{dcl}}
\def \DLO{\operatorname{DLO}}
\def \fo{\operatorname{fdcl}}
\numberwithin{equation}{section}
\def \l{\llbracket}
\def \rr{\rrbracket}
\begin{document}

\title{Algebraic Independence Relations in Randomizations}

\author{Uri Andrews, Isaac Goldbring, and H. Jerome Keisler}

\address{University of Wisconsin-Madison, Department of Mathematics, Madison,  WI 53706-1388}
\email{andrews@math.wisc.edu}
\urladdr{www.math.wisc.edu/~andrews}
\email{keisler@math.wisc.edu}
\urladdr{www.math.wisc.edu/~keisler}
\address{University of California, Irvine, Department of Mathematics, Irvine, CA, 92697-3875}
\email{isaac@math.uci.edu}
\urladdr{www.math.uci.edu/~isaac}

\date{\today}

\begin{abstract}
We study the properties of algebraic independence and pointwise algebraic independence in a class of continuous theories, the randomizations $T^R$ of
complete first order theories $T$.
If algebraic and definable closure coincide in $T$, then algebraic independence in $T^R$ satisfies extension and has local character with the smallest
possible bound, but has neither finite character nor base monotonicity.  For arbitrary $T$, pointwise algebraic independence in $T^R$ satisfies extension
for countable sets, has finite character, has local character with the smallest possible bound, and satisfies base monotonicity if and only if
algebraic independence in $T$ does.
\end{abstract}

\maketitle

\section{Introduction}
The randomization of a complete first order theory $T$ is the complete continuous theory $T^R$
with two sorts, a sort for random elements of models of $T$, and a sort for events in an underlying probability space.
The aim of this paper is to investigate algebraic independence relations in randomizations of first order theories.
We will use results from our earlier papers [AGK1], which characterizes definability in randomizations, and [AGK2], where it is shown that the randomization of every
o-minimal theory is real rosy, that is, has a strict independence relation.

We focus on the independence axioms introduced by Adler [Ad2] (see Definition \ref{d-adler} below).
In first order model theory, algebraic independence is anti-reflexive and satisfies all of Adler's axioms except perhaps base monotonicity,
and also satisfies \emph{small local character}, a property that implies local character with the smallest possible bound $\kappa(D)=(|D|+\aleph_0)^+$.
It was shown in [BBHU] and [EG] that for any complete continuous theory, the algebraic independence relation satisfies all of the Adler's axioms except perhaps base
monotonicity, extension, and finite character, and also satisfies countable character (a weakening of finite character),  has local character with
bound $\kappa(D)=((|D|+2)^{\aleph_0})^+$, and is anti-reflexive.  We show here that if the underlying first order theory
$T$ has $\acl=\dcl$ (that is, algebraic closure coincides with definable closure), then algebraic closure in $T^R$ also satisfies extension and small local character.
However, for every $T$, algebraic independence in $T^R$ never has finite character and never satisfies base monotonicity.

Another relation on models of $T^R$ is \emph{pointwise algebraic independence}, which was introduced in [AGK2] and roughly means algebraic
independence almost everywhere.  We show that for arbitrary $T$ (rather than just when $T$ has $\acl=\dcl$), pointwise algebraic independence
in $T^R$ satisfies all of Adler's axioms except perhaps base monotonicity and extension.  In particular,
it does have finite character.  Moreover, pointwise algebraic independence satisfies extension for countable sets, has small local character, and satisfies base monotonicity
if and only if algebraic extension in $T$ satisfies base monotonicity.  However, pointwise algebraic independence is never anti-reflexive.

This paper is organized as follows.  In Section 2 we review Adler's axioms for independence relations and some general results from the literature about algebraic
independence in first order and continuous model theory.  Section 3 contains some notions and results about the randomization theory $T^R$ that we will need from the papers
[AGK1] and [AGK2].  Section 4 contains the proofs of the negative results that in $T^R$, algebraic independence never has finite character and never satisfies
base monotonicity.  To better understand why this happens, we take a closer look at the example of dense linear order.  Section 5 contains the proof of the result
that if $T$  has $\acl=\dcl$ then algebraic independence in $T^R$ satisfies the extension axiom.   In Section 6 we prove that if $T$ has $\acl=\dcl$ then
algebraic independence in $T^R$ has small local character.  On the way to this proof, we introduce the pointwise algebraic independence relation in $T^R$,
and show that it has small local character whether or not $T$ has $\acl=\dcl$.  Finally, in Section 7 we prove the other results stated in the
preceding  paragraph about pointwise algebraic independence in $T^R$. We also show that in $T^R$, pointwise algebraic independence never implies algebraic independence,
and algebraic independence implies pointwise algebraic independence only in the trivial case that the models of $T$ are finite.

For background in continuous model theory in its current form we refer to the papers [BBHU] and [BU].
We assume the reader is familiar with the basics of continuous model theory, including the notions of a theory, model, pre-model, reduction, and completion.
For background on randomizations of models we refer to the papers [Ke] and [BK].
We follow the terminology of [AGK2]. A continuous pre-model is called \emph{pre-complete} if its reduction is its completion.  The set of all finite tuples in a set $A$
is denoted by $A^{<\BN}.$  We assume throughout this paper that $T$ is a complete first order theory with countable signature $L$ and models of cardinality $>1$, and
that $\upsilon$ is an uncountable inaccessible cardinal that is held fixed.  We let $\cu M$ be the \emph{big model} of $T$, that is,
the (unique up to isomorphism) saturated model $\cu M\models T$ that is finite or of cardinality $|\cu N|=\upsilon$.
We call a set \emph{small} if it has cardinality $< \upsilon$, and \emph{large} otherwise.

\section{Independence}

\subsection{Abstract Independence Relations}

Since the various properties of independence are given some slightly different names in
various parts of the literature, we take this opportunity to declare that we are following the terminology established in [Ad2], which is
repeated here for the reader's convenience.  In this paper, we will sometimes write $AB$ for $A\cup B$, and write $[A,B]$ for
$\{D\colon A\subseteq D \wedge D\subseteq B\}$

\begin{df}[Adler]  \label{d-adler}
Let $\cu N$ be the big model of a continuous or first order theory.  By a \emph{ternary relation over $\cu N$} we mean
a ternary relation $\ind$ on the small subsets of $\cu N$.  We say that $\ind$
is an \emph{independence relation} if it satisfies the following \emph{axioms for independence relations} for all small sets:
\begin{enumerate}
\item (Invariance) If $A\ind_CB$ and $(A',B',C')\equiv (A,B,C)$, then $A'\ind_{C'}B'$.
\item (Monotonicity) If $A\ind_C B$, $A'\subseteq A$, and $B'\subseteq B$, then $A'\ind_C B'$.
\item (Base monotonicity) Suppose $C\in[D, B]$.  If $A\ind_D B$, then $A\ind_C B$.
\item (Transitivity) Suppose $C\in[D, B]$.  If $B\ind_C A$ and $C\ind_D A$, then $B\ind_D A$.
\item (Normality) $A\ind_CB$ implies $AC\ind_C B$.
\item (Extension) If $A\ind_C B$ and $\hat B\supseteq B$, then there is $A'\equiv_{BC} A$ such that $A'\ind_C \hat B$.
\item (Finite character) If $A_0\ind_CB$ for all finite $A_0\subseteq A$, then $A\ind_CB$.
\item (Local character) For every $A$, there is a cardinal $\kappa(A)<\upsilon$ such that, for any set $B$, there is a subset $C$ of $B$ with $|C|<\kappa(A)$ such that $A\ind_CB$.
\end{enumerate}
If finite character is replaced by countable character (which is defined in the obvious way), then we say that $\ind$ is a
\emph{countable independence relation}.  We will refer to the first five axioms (1)--(5) as the \emph{basic axioms}.
\end{df}

\begin{df}
An independence relation $\ind$ is \emph{strict} if it satisfies
\begin{itemize}
\item[(9)] (Anti-reflexivity) $a\ind_B a$ implies $a\in \acl(B)$.
\end{itemize}
\end{df}

There are two other useful properties to consider when studying ternary relations over $\cu N$:

\begin{df}

\noindent\begin{enumerate}
\item[(10)] (Full existence) For every $A,B,C$, there is $A'\equiv_C A$ such that $A'\ind_C B$.
\item[(11)] (Symmetry) For every $A,B,C$, $A\ind_CB$ implies $B\ind_CA$.
\end{enumerate}
\end{df}

\begin{fact}  \label{f-fe-vs-ext}  (Remarks 2.2.4 in [AGK2]).

\noindent\begin{enumerate}
\item[(i)]  Whenever $\ind$ satisfies invariance, monotonicity, transitivity, normality, full existence, and symmetry,
then $\ind$ also satisfies extension.
\item[(ii)]  Any countable independence relation is symmetric.
\end{enumerate}
\end{fact}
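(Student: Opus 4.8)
The first assertion is bookkeeping with the axioms, while the second is a form of the principle, essentially due to Adler, that an independence relation with extension and local character must be symmetric; the plan is to prove (i) directly and to reduce (ii) to a long‑sequence argument.

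For (i): I would first reduce to the case $C\subseteq B$. Replacing $B$ by $BC$ and $\hat B$ by $\hat BC$ is harmless: from $A\ind_C B$ one obtains $A\ind_C BC$ by symmetry, normality and monotonicity, and any $A'\equiv_{BC}A$ with $A'\ind_C\hat BC$ also satisfies $A'\ind_C\hat B$ by monotonicity (and $\equiv_{BC}$ is exactly what is wanted). So assume $C\subseteq B\subseteq\hat B$ and $A\ind_C B$. Apply full existence to $A$ and $\hat B$ over the base $B$ to get $A'\equiv_B A$ with $A'\ind_B\hat B$. Since $C\subseteq B$, the triples $(A',B,C)$ and $(A,B,C)$ are conjugate, so invariance turns $A\ind_C B$ into $A'\ind_C B$. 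Rewriting both by symmetry gives $\hat B\ind_B A'$ and $B\ind_C A'$, and transitivity applied along $C\subseteq B\subseteq\hat B$ yields $\hat B\ind_C A'$; a final use of symmetry gives $A'\ind_C\hat B$. Undoing the reduction gives extension. The only subtlety is matching the axioms in the right order.

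For (ii): let $\ind$ be a countable independence relation. Applying local character to the set $C$ and then base monotonicity gives $a\ind_C C$ for all $a$, and extension upgrades this to full existence. By countable character together with monotonicity it suffices to prove $B\ind_C A$ whenever $A\ind_C B$ with $B$ countable (and $A$ arbitrary). Suppose instead $B\nind_C A$, fix a regular cardinal $\kappa>\kappa(B)+|C|+\aleph_0$, and use extension repeatedly to build pairwise disjoint $(A_i:i<\kappa)$ with $A_i\equiv_{CB}A$ and $A_i\ind_C BA_{<i}$ for every $i$, where $A_{<i}=\bigcup_{j<i}A_j$ (at stage $i$, apply extension to $A\ind_C B$ with the larger set $BA_{<i}$). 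Since each $A_i\equiv_{CB}A$, invariance gives $B\nind_C A_i$ for all $i$. Now apply local character for $B$ to $X=C\cup\bigcup_{i<\kappa}A_i$ to get $C^\ast\subseteq X$ with $|C^\ast|<\kappa(B)$ and $B\ind_{C^\ast}X$; by regularity of $\kappa$, $C^\ast\subseteq CA_{<i^\ast}$ for some $i^\ast<\kappa$, so base monotonicity and monotonicity give $B\ind_{CA_{<i^\ast}}A_{i^\ast}$. It then remains to descend the base from $CA_{<i^\ast}$ down to $C$, which yields $B\ind_C A_{i^\ast}$ and contradicts $B\nind_C A_{i^\ast}$; this step uses transitivity (with the base moving within $[C,BA_{<i^\ast}]$), normality, and the fact that the initial segment $A_{<i^\ast}$ is independent of $B$ over $C$, and is carried out as in [AGK2].

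I expect this base‑descent step to be the crux. Since symmetry is precisely what is being proved, one cannot simply transpose the independence statement that local character provides, so the sequence $(A_i)$ and the types over the $A_{<i^\ast}$ must be arranged carefully enough that transitivity closes the loop; this is exactly where local character (and not merely the basic axioms together with extension) is used. Everything else—the reductions, the construction of the independent sequence, and the cardinal arithmetic around $\kappa$—is routine.
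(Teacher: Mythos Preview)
The paper does not give its own proof of this Fact; it is stated with a citation to [AGK2] (Remarks~2.2.4) and no argument, so there is nothing in the paper to compare your sketch against directly.  That said, your outline is the standard route (Adler's), and your argument for (i) is correct as written.

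For (ii), your strategy---assume $A\ind_C B$ and $B\nind_C A$, use countable character to make $B$ countable, build a long sequence $(A_i)_{i<\kappa}$ by extension, then invoke local character for $B$ to find $C^\ast\subseteq CA_{<i^\ast}$ and derive a contradiction---is the right shape.  The point you flag as the crux really is the crux, but the ingredient you name for the descent is not the one transitivity actually requires.  From $B\ind_{CA_{<i^\ast}}A_{i^\ast}$, normality gives $BCA_{<i^\ast}\ind_{CA_{<i^\ast}}A_{i^\ast}$, and transitivity (with $D=C$, middle $CA_{<i^\ast}$, left $BCA_{<i^\ast}$, right $A_{i^\ast}$) would yield $B\ind_C A_{i^\ast}$ \emph{provided} $CA_{<i^\ast}\ind_C A_{i^\ast}$.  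What you cite instead---``the initial segment $A_{<i^\ast}$ is independent of $B$ over $C$''---is derivable from the construction, but it is on the wrong side: the construction gives $A_{i^\ast}\ind_C CA_{<i^\ast}$, and flipping that is precisely symmetry.  To get $CA_{<i^\ast}\ind_C A_{i^\ast}$ one must build the sequence more carefully (for instance via full existence and an automorphism at each step so that the new $A_i$ sits on the right of $\ind_C$ relative to $CA_{<i}$), not merely by iterating extension from $A\ind_C B$.  You acknowledge this in your final paragraph (``the sequence $(A_i)$ \ldots\ must be arranged carefully enough''), and since both you and the paper ultimately defer the details to [AGK2]/[Ad1], this is more a matter of precision than a fatal gap; but as written, the sequence you actually construct does not supply the hypothesis transitivity needs.
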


\begin{df}

\noindent\begin{enumerate}
\item[(i)] We say that $\ind$ has \emph{countably local character} if for every countable set $A$ and every small set $B$, there is a countable subset $C$ of $B$ such that $A\ind_CB$.
\item[(ii)] We say that $\ind$ has \emph{small local character} if  for all small sets $A, B, C_0$
such that $C_0\subseteq B$ and $|C_0|\le|A|+\aleph_0$, there is
a set $C\in[C_0,B]$ such that $|C|\le|A|+\aleph_0$ and $A\ind_C \ B$.
\end{enumerate}
\end{df}

\begin{fact}  \label{f-countably-localchar} (Remark 2.2.7 in [AGK2]).
\noindent\begin{enumerate}
\item[(i)]  If $\ind$ has small local character, then $\ind$ has local character with bound $\kappa(D)=(|D|+\aleph_0)^+$ (the smallest possible bound).
In the presence of monotonicity, the converse is also true.
\item[(ii)] If $\ind$ has local character with bound $\kappa(D)=(|D|+\aleph_0)^+$, then $\ind$ has countably local character.
\item[(iii)] If $\ind$ has invariance, countable character, base monotonicity, and countably local character, then
$\ind$ has local character with bound $\kappa(D)=((|D| + 2)^{\aleph_0})^+$.
\end{enumerate}
\end{fact}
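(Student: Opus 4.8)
The plan is to take the three parts in order; (i) and (ii) are routine unwindings of the definitions, while the real content is in (iii).

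For (i), the forward direction is immediate: given $A$, set $\kappa(A)=(|A|+\aleph_0)^+$, and for any small $B$ apply small local character with $C_0=\emptyset\subseteq B$ to obtain $C\subseteq B$ with $|C|\le|A|+\aleph_0<\kappa(A)$ and $A\ind_C B$. For the converse, given small sets $A,B$ and $C_0\subseteq B$ with $|C_0|\le|A|+\aleph_0$, apply local character to $A$ to get $C_1\subseteq B$ with $|C_1|<(|A|+\aleph_0)^+$, hence $|C_1|\le|A|+\aleph_0$, and $A\ind_{C_1}B$; then $C:=C_0\cup C_1$ lies in $[C_0,B]$, has $|C|\le|A|+\aleph_0$, and satisfies $A\ind_C B$ by base monotonicity, since $C_1\subseteq C\subseteq B$. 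For (ii), if $A$ is countable and $B$ is small then $\kappa(A)=\aleph_1$, so local character yields a countable $C\subseteq B$ with $A\ind_C B$, which is countably local character verbatim.

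For (iii), fix small sets $D,B$ and put $\lambda=(|D|+2)^{\aleph_0}$, so the claimed bound is $\kappa(D)=\lambda^+$. For each countable $D_0\subseteq D$, countably local character supplies a countable $C_{D_0}\subseteq B$ with $D_0\ind_{C_{D_0}}B$; let $C=\bigcup\{C_{D_0}\colon D_0\subseteq D,\ |D_0|\le\aleph_0\}$. The collection of countable subsets of $D$ has cardinality at most $\lambda$ — it is finite when $D$ is finite, and of size $|D|^{\aleph_0}=\lambda$ when $D$ is infinite (note that even a countably infinite $D$ already has $2^{\aleph_0}=\lambda$ countable subsets) — and each $C_{D_0}$ is countable, so $|C|\le\lambda$; since $\upsilon$ is inaccessible and $D$ is small we have $\lambda<\upsilon$, so $C$ is a small subset of $B$ with $|C|<\kappa(D)$. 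Now for every countable $D_0\subseteq D$ we have $C_{D_0}\subseteq C\subseteq B$, so base monotonicity promotes $D_0\ind_{C_{D_0}}B$ to $D_0\ind_C B$; since this holds for all countable $D_0\subseteq D$, countable character gives $D\ind_C B$. (Invariance is used only for routine bookkeeping.) This is local character with bound $\kappa(D)=((|D|+2)^{\aleph_0})^+$.

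The one place needing care is (iii): one must keep the cardinal arithmetic honest — in particular that $D$ has $(|D|+2)^{\aleph_0}$, not $|D|+\aleph_0$, countable subsets, which is precisely why the bound degrades from the optimal $(|D|+\aleph_0)^+$ — and one must correctly interleave base monotonicity (to collapse the many small bases $C_{D_0}$ into a single base $C\subseteq B$) with countable character (to pass back from all countable subsets of $D$ to $D$ itself). I do not anticipate any obstacle beyond this.
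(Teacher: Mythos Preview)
The paper does not prove this statement at all: it is recorded as a Fact with a bare citation to Remark~2.2.7 in [AGK2], so there is no argument in the present paper to compare your proposal against.

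Your arguments for (ii) and (iii) are correct and are the natural ones. In (iii) you correctly interleave base monotonicity (to enlarge each $C_{D_0}$ to the common base $C\subseteq B$) with countable character (to pass from all countable $D_0\subseteq D$ back to $D$), and the cardinality bookkeeping is right.

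There is one genuine gap. In the converse direction of (i) you pass from $A\ind_{C_1}B$ to $A\ind_{C_0\cup C_1}B$ using $C_1\subseteq C_0\cup C_1\subseteq B$ and explicitly invoke \emph{base monotonicity}. But the stated hypothesis is only \emph{monotonicity}, which in this paper is axiom~(2) of Definition~\ref{d-adler} and allows you to shrink $A$ or $B$, not to enlarge the base $C$. With monotonicity alone that step is unjustified, and I do not see a workaround: local character hands you some small $C_1\subseteq B$ with no control over whether $C_1\supseteq C_0$, and shrinking the outer arguments cannot manufacture a larger base. So either the word ``monotonicity'' in the Fact is a slip for ``base monotonicity'' (in which case your proof is fine), or a genuinely different argument is needed; as written, your proof of the converse of (i) uses a hypothesis stronger than the one stated.
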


We say that $\ind[J]$ is \emph{weaker than} $\ind[I]$, and write $\ind[I]\Rightarrow \ind[J]$, if $A\ind[I]_C B\Rightarrow A\ind[J]_C B$.

\begin{rmk}  \label{r-weaker}  Suppose $\ind[I]\Rightarrow\ind[J]$.  If $\ind[I]$ has full existence, local character,
countably local character, or small local character.  Then $\ind[J]$ \ has the same property.
\end{rmk}

\subsection{Algebraic Independence}

\begin{df}   In first order logic, a formula $\varphi(u,\vec v)$ is \emph{functional} in  $T$ if
$$T\models(\forall \vec v)(\exists ^{\le  1} u)\varphi(u,\vec v).$$
$\varphi(u,\vec v)$ is \emph{algebraical} in $T$ if there exists $n\in\BN$ such that
$$T\models(\forall \vec v)(\exists ^{\le  n} u)\varphi(u,\vec v).$$
\end{df}

The \emph{definable closure} of $A$ in  $\cu M$  is the set
$$\dcl^{\cu M}(A)=\{b\in M\mid \cu M\models\varphi(b,\vec a) \mbox{ for some functional } \varphi \mbox{ and } \vec a\in A^{<\BN}\}.$$
The \emph{algebraic closure} of $A$ in $\cu M$ is the set
$$\acl^{\cu M}(A)=\{b\in M\mid \cu M\models\varphi(b,\vec a) \mbox{ for some algebraical } \varphi \mbox{ and } \vec a\in A^{<\BN}\}.$$

We refer to [BBHU] for the definitions of the algebraic closure $\acl^{\cu N}(A)$ and definable closure $\dcl^{\cu N}(A)$ in a continuous structure $\cu N$.
If  $\cu N$ is clear from the context, we will sometimes drop the superscript and write $\dcl, \acl$ instead of $ \dcl^\cu N, \acl^\cu N$.
We will often use the following facts without explicit mention.

\begin{fact}  \label{f-algebraic-cardinality}  (Follows from [BBHU], Exercise 10.8)
For every set $A$, $\acl(A)$ has cardinality at most $( |A|+2)^{\aleph_0}$.  Thus the algebraic closure of a small set is small.
\end{fact}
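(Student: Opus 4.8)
The plan is to exhibit $\acl(A)$ as a union of at most $(|A|+2)^{\aleph_0}$ sets, each of cardinality at most $2^{\aleph_0}$. Work in a continuous structure $\cu N$ (if $\cu N$ is a first order model there is nothing to do, since there $\acl(A)$ is a union of at most $|A|+\aleph_0$ finite sets). The starting point is the description of algebraic closure in metric structures from [BBHU], \S 10: an element $b$ lies in $\acl^{\cu N}(A)$ exactly when $b$ belongs to some $A$-definable subset $D\subseteq\cu N$ that is compact in the metric of $\cu N$. Consequently
$$\acl^{\cu N}(A)=\bigcup\{D\colon D\text{ an }A\text{-definable compact subset of }\cu N\}.$$

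First I would bound the number of sets in this union. Every $A$-definable set is the zero set of a definable predicate over $A$ in one variable, and each such predicate is, by definition, a uniform limit of a sequence of formulas $\psi(x,\vec a)$ with $\psi$ an $L$-formula and $\vec a\in A^{<\BN}$; since $L$ is countable, the pair $(\psi,\vec a)$ ranges over a set of size at most $|A|+\aleph_0$, so there are at most $(|A|+\aleph_0)^{\aleph_0}=(|A|+2)^{\aleph_0}$ definable predicates over $A$, hence at most that many candidate sets $D$. Next, each such $D$, with the metric inherited from $\cu N$, is a compact metric space, hence separable, hence of cardinality at most $2^{\aleph_0}$. Combining the two estimates gives
$$|\acl^{\cu N}(A)|\le (|A|+2)^{\aleph_0}\cdot 2^{\aleph_0}=(|A|+2)^{\aleph_0}.$$
For the last sentence of the statement: if $A$ is small then $|A|<\upsilon$, and as $\upsilon$ is inaccessible, hence a strong limit, $(|A|+2)^{\aleph_0}<\upsilon$, so $\acl^{\cu N}(A)$ is small.

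The main obstacle is really just the first paragraph: one needs the fact that an algebraic element sits inside a compact \emph{definable} set, not merely inside a compact \emph{type}-definable one. This is what allows the union above to be indexed by definable predicates over $A$ --- of which there are at most $(|A|+2)^{\aleph_0}$ --- rather than by complete types over $A$, which could be far more numerous. Everything after that is routine cardinal arithmetic together with the separability of compact metric spaces, which is why the Fact can be stated simply as a consequence of [BBHU], Exercise 10.8.
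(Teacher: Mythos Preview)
The paper gives no proof of this Fact beyond the citation to Exercise 10.8 of [BBHU]; your argument is a correct elaboration of how the bound follows from that exercise, using the characterization of $\acl(A)$ as the union of all $A$-definable compact sets, the count $(|A|+\aleph_0)^{\aleph_0}$ for the number of $A$-definable predicates, and the separability of compact metric spaces. Your final paragraph correctly identifies the one nontrivial input, namely that an algebraic element lies in a compact \emph{definable} (not merely type-definable) set, which is exactly what the cited exercise provides.
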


\begin{fact} \label{f-definableclosure}  (Definable Closure, Exercises 10.10 and 10.11, and Corollary 10.5 in [BBHU])
\begin{enumerate}
\item If $ A\subseteq\cu N$ then $\dcl( A)=\dcl(\dcl( A))$ and $\acl( A)=\acl(\acl( A))$.
\item If $ A$ is a dense subset of the topological closure of $B$ and $ B\subseteq\cu N$, then $\dcl(A)=\dcl( B)$ and $\acl(A)=\acl( B)$.
\end{enumerate}
\end{fact}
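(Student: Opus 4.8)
\emph{Proof proposal.} Since the statement is really a package of standard facts about the closure operators of continuous logic, the plan is to reduce everything to two ingredients: the automorphism characterizations of $\dcl$ and $\acl$, and the transitivity of algebraic closure. First I would pass to a sufficiently saturated, strongly homogeneous elementary extension $\cu N^{\ast}\succeq\cu N$ (with saturation degree larger than all the sets under consideration), so that for $A\subseteq\cu N$ one has $\dcl^{\cu N}(A)=\dcl^{\cu N^{\ast}}(A)\cap\cu N$ and $\acl^{\cu N}(A)=\acl^{\cu N^{\ast}}(A)\cap\cu N$, and recall from [BBHU] that, computed in $\cu N^{\ast}$, $b\in\dcl(A)$ iff $\sigma(b)=b$ for every automorphism $\sigma$ of $\cu N^{\ast}$ fixing $A$ pointwise, while $b\in\acl(A)$ iff the orbit of $b$ under the group $\operatorname{Aut}(\cu N^{\ast}/A)$ of such automorphisms has compact closure (equivalently, iff the realization set of $\tp(b/A)$ in $\cu N^{\ast}$ is compact). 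I will also use that $\dcl$ and $\acl$ are monotone, that $A\subseteq\dcl(A)\subseteq\acl(A)$, and that $\dcl(A)$ and $\acl(A)$ are topologically closed.

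For part (1): the inclusions $\dcl(A)\subseteq\dcl(\dcl(A))$ and $\acl(A)\subseteq\acl(\acl(A))$ are immediate from monotonicity together with $A\subseteq\dcl(A)\subseteq\acl(A)$. For $\dcl(\dcl(A))\subseteq\dcl(A)$ I would observe that every $\sigma\in\operatorname{Aut}(\cu N^{\ast}/A)$ fixes each point of $\dcl(A)$ by the very definition of $\dcl(A)$, so $\operatorname{Aut}(\cu N^{\ast}/A)\subseteq\operatorname{Aut}(\cu N^{\ast}/\dcl(A))$; hence anything fixed by the latter group is fixed by the former. For $\acl(\acl(A))\subseteq\acl(A)$: this is exactly the transitivity of algebraic closure, which I would quote from [BBHU], or reprove by the usual compactness argument — the orbit of $b$ over $\acl(A)$ is compact, $\tp(b/\acl(A))$ is already implied over some finite $A_{0}\subseteq A$ by finite-character considerations, and extracting a finite subcover of the realization set of $\tp(b/A)$ shows that orbit has compact closure as well.

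For part (2): the key auxiliary claim is that $\dcl(C)=\dcl(\overline{C})$ and $\acl(C)=\acl(\overline{C})$ for every $C\subseteq\cu N$, where $\overline{C}$ denotes the topological closure. Since $\dcl(C)$ and $\acl(C)$ are closed and contain $C$, they contain $\overline{C}$, so by monotonicity and part (1), $\dcl(C)\subseteq\dcl(\overline{C})\subseteq\dcl(\dcl(C))=\dcl(C)$, and likewise for $\acl$. (Alternatively, automorphisms of $\cu N^{\ast}$ are isometries, hence continuous, so one fixing $C$ pointwise fixes $\overline{C}$ pointwise; thus $\operatorname{Aut}(\cu N^{\ast}/C)=\operatorname{Aut}(\cu N^{\ast}/\overline{C})$, and both $\dcl$ and $\acl$ depend only on this group.) Granting the claim, suppose $A$ is dense in $\overline{B}$. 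Then $A\subseteq\overline{B}$ gives $\overline{A}\subseteq\overline{B}$, density gives $\overline{B}\subseteq\overline{A}$, so $\overline{A}=\overline{B}$; applying the claim to $A$ and to $B$ yields $\dcl(A)=\dcl(\overline{A})=\dcl(\overline{B})=\dcl(B)$ and $\acl(A)=\acl(\overline{A})=\acl(\overline{B})=\acl(B)$.

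The only genuinely non-formal ingredient is the transitivity of $\acl$ used in part (1) (together with the fact that $\dcl$ and $\acl$ are topologically closed); everything else is bookkeeping with the automorphism characterizations and elementary point-set topology. I expect the main obstacle, were one to insist on a fully self-contained argument, to be pinning down the equivalences between the automorphism-, type-, and definable-set-based descriptions of $\acl$ and $\dcl$ in an arbitrary (not necessarily saturated) continuous structure, and checking that closures transfer correctly between $\cu N$ and $\cu N^{\ast}$ — which is precisely why the statement is given here as a citation to the relevant exercises and corollary of [BBHU].
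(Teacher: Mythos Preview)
Your proposal is correct. The paper itself supplies no proof of this statement: it is recorded as a \emph{Fact} with a bare citation to Exercises 10.10, 10.11 and Corollary 10.5 of [BBHU], so there is no argument in the paper to compare against. Your route via the automorphism characterizations in a saturated strongly homogeneous extension, combined with the quoted transitivity of $\acl$ (Corollary 10.5 of [BBHU]) and the topological closedness of $\dcl(A)$ and $\acl(A)$, is precisely the standard solution one would give to those exercises; the only part of your sketch that is loose is the parenthetical ``reprove by the usual compactness argument'' for $\acl(\acl(A))\subseteq\acl(A)$, but since you lead with citing [BBHU] for that step---which is exactly what the paper does---this is harmless.
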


It follows that for any $ A\subseteq\cu N$, $\dcl( A)$ and $\acl( A)$ are topologically closed.

\noindent In any complete theory (first order or continuous), we define the notion of \emph{algebraic independence}, denoted $\ind[a]$,
by setting $A\ind[a]_CB$ to mean $\acl(AC)\cap \acl(BC)=\acl(C)$. In first order logic, $\ind[a]$ satisfies all axioms for a strict
independence relation except for perhaps base monotonicity.

\begin{prop}  \label{p-alg-indep}
In any complete continuous theory, $\ind[a]$ satisfies symmetry and all axioms for a strict countable independence relation except perhaps for
base monotonicity and extension.
\end{prop}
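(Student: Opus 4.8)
The plan is to read off each axiom directly from the definition $A\ind[a]_CB\iff\acl(AC)\cap\acl(BC)=\acl(C)$, using only that $\acl$ is a monotone, idempotent closure operator invariant under elementary maps (Fact \ref{f-definableclosure}), the cardinality bound $|\acl(S)|\le(|S|+2)^{\aleph_0}$ of Fact \ref{f-algebraic-cardinality}, and the standard fact about continuous algebraic closure (see [BBHU]) that $\acl$ has \emph{countable character}: if $b\in\acl(S)$ then $b\in\acl(S_0)$ for some countable $S_0\subseteq S$.

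Most of the axioms are then essentially formal. Symmetry is immediate, the defining condition being symmetric in $A$ and $B$. Normality holds because $\acl(ACC)=\acl(AC)$, so $A\ind[a]_CB$ and $AC\ind[a]_CB$ are literally the same assertion. Anti-reflexivity holds because $a\ind[a]_Ba$ unpacks to $\acl(aB)=\acl(B)$, whence $a\in\acl(aB)=\acl(B)$. For invariance I would extend the type equality $(A',B',C')\equiv(A,B,C)$ to an automorphism $\sigma$ of the (saturated, hence strongly homogeneous) big model and use $\sigma(\acl(X))=\acl(\sigma X)$. For monotonicity, if $A'\subseteq A$ and $B'\subseteq B$ then the hypothesis $A\ind[a]_CB$ gives $\acl(C)\subseteq\acl(A'C)\cap\acl(B'C)\subseteq\acl(AC)\cap\acl(BC)=\acl(C)$, forcing equality throughout. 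For transitivity, suppose $D\subseteq C\subseteq B$, $B\ind[a]_CA$, and $C\ind[a]_DA$; since $C\subseteq B$ and $D\subseteq C$ these say $\acl(B)\cap\acl(AC)=\acl(C)$ and $\acl(C)\cap\acl(AD)=\acl(D)$. If $x\in\acl(B)\cap\acl(AD)$, then $x\in\acl(AC)$ because $AD\subseteq AC$, so $x\in\acl(B)\cap\acl(AC)=\acl(C)$, and hence $x\in\acl(C)\cap\acl(AD)=\acl(D)$; as $\acl(D)\subseteq\acl(B)\cap\acl(AD)$ trivially, this is exactly $B\ind[a]_DA$.

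For countable character, assume $A_0\ind[a]_CB$ for every countable $A_0\subseteq A$, and take $x\in\acl(AC)\cap\acl(BC)$. Countable character of $\acl$ yields a countable $D\subseteq AC$ with $x\in\acl(D)$; then $A_0:=D\cap A$ is countable, $x\in\acl(A_0C)$, and so $x\in\acl(A_0C)\cap\acl(BC)=\acl(C)$. Hence $\acl(AC)\cap\acl(BC)\subseteq\acl(C)$, the reverse inclusion being trivial, so $A\ind[a]_CB$.

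The one step requiring genuine work is local character, where I claim $\kappa(A)=((|A|+2)^{\aleph_0})^+$ works. Put $\lambda=(|A|+2)^{\aleph_0}$, so that $\lambda^{\aleph_0}=\lambda$, $\lambda\ge\aleph_1$, and $\lambda^+<\upsilon$ (as $\upsilon$ is inaccessible and $|A|<\upsilon$). Given a small $B$, I would build an increasing chain $\langle C_\alpha:\alpha<\omega_1\rangle$ of subsets of $B$ with each $|C_\alpha|\le\lambda$: set $C_0=\emptyset$, take unions at limit stages, and at a successor stage choose $C_{\alpha+1}\supseteq C_\alpha$, a subset of $B$ that for each of the at most $|\acl(AC_\alpha)|\le\lambda$ elements $x$ of $\acl(AC_\alpha)\cap\acl(B)$ contains a countable $B_x\subseteq B$ witnessing $x\in\acl(B_x)$, so that $|C_{\alpha+1}|\le\lambda+\lambda\cdot\aleph_0=\lambda$. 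Then $C:=\bigcup_{\alpha<\omega_1}C_\alpha\subseteq B$ has $|C|\le\aleph_1\cdot\lambda=\lambda<\kappa(A)$. To verify $A\ind[a]_CB$: since $C\subseteq B$ the condition reads $\acl(AC)\cap\acl(B)=\acl(C)$, and $\supseteq$ is trivial; conversely if $x\in\acl(AC)\cap\acl(B)$, then $x\in\acl(E)$ for some countable $E\subseteq AC$, and since $\operatorname{cf}(\omega_1)>\omega$ the countable set $E\cap C$ lies in a single $C_\alpha$, whence $x\in\acl(AC_\alpha)\cap\acl(B)\subseteq\acl(C_{\alpha+1})\subseteq\acl(C)$. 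The main obstacle is precisely this maneuver: because continuous $\acl$ has only countable, not finite, character, the closing-off must be iterated along a chain of uncountable cofinality so that each countable witness set is captured at some stage, and keeping every stage of size $\le(|A|+2)^{\aleph_0}$ is what pins the bound at $((|A|+2)^{\aleph_0})^+$ rather than the optimal $(|A|+\aleph_0)^+$ — which, as the introduction notes, is exactly the discrepancy that the rest of the paper is concerned with.
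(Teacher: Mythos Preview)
Your proof is correct and follows essentially the same route as the paper's. The paper simply cites Adler's first-order argument ([Ad2], Proposition~1.5) and notes the two modifications needed in continuous logic---countable character of $\acl$ in place of finite character, and the resulting bound $\kappa(A)=((|A|+2)^{\aleph_0})^+$---while you have written out in full the details those modifications entail, including the $\omega_1$-length closing-off argument for local character.
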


\begin{proof}
The proof is exactly as in [Ad2], Proposition 1.5, except for some minor modifications.  For example, countable character of $\acl$ in
continuous logic yields countable character of $\ind[a]$.  Also, in the verification of local character, one needs to take
$\kappa(A):=((|A|+2)^{\aleph_0})^+$ instead of $(|A|+\aleph_0)^+$.
\end{proof}

\begin{question}  \label{q-a-full-existence}
Does $\ind[a]$ always have full existence (or extension) in continuous logic?
\end{question}

The proof that $\ind[a]$  has full existence in first order logic uses the negation connective, which is not available in continuous logic.

\section{Randomizations}

\subsection{The Theory $T^R$}

The \emph{randomization signature} $L^R$ is the two-sorted continuous signature
with sorts $\BK$ (for random elements) and $\BB$ (for events), an $n$-ary
function symbol $\l\varphi(\cdot)\rr$ of sort $\BK^n\to\BB$
for each first order formula $\varphi$ of $L$ with $n$ free variables,
a $[0,1]$-valued unary predicate symbol $\mu$ of sort $\BB$ for probability, and
the Boolean operations $\top,\bot,\sqcap, \sqcup,\neg$ of sort $\BB$.  The signature
$L^R$ also has distance predicates $d_\BB$ of sort $\BB$ and $d_\BK$ of sort $\BK$.
In $L^R$, we use ${\sa B},{\sa C},\ldots$ for variables or parameters of sort $\BB$. ${\sa B}\doteq{\sa C}$
means $d_\BB({\sa B},{\sa C})=0$, and ${\sa B}\sqsubseteq{\sa C}$ means ${\sa B}\doteq{\sa B}\sqcap{\sa C}$.
A structure with signature $L^R$ will be a pair $\cu N=(\cu K,\cu E)$ where $\cu K$ is the part of sort $\BK$ and
$\cu E$ is the part of sort $\BB$.

The following fact, which is a consequence of Proposition 2.1.10 of [AGK1], gives a model-theoretic characterization of $T^R$.

\begin{fact}  \label{f-neat}
There is a unique complete theory $T^R$ with signature $L^R$ whose big model $\cu N=(\cu K,\cu E)$ is the reduction of a
pre-complete-structure $\cu P=(\cu J,\cu F)$
equipped with a complete atomless probability space $(\Omega,\cu F,\mu )$ such that:
\begin{enumerate}
\item $\cu F$ is a $\sigma$-algebra with $\top,\bot,\sqcap, \sqcup,\neg$ interpreted by $\Omega,\emptyset,\cap,\cup,\setminus$.
\item $\cu J$ is a set of functions $a\colon\Omega\to M$.
\item For each formula $\psi(\vec{x})$ of $L$ and tuple
$\vec{a}$ in $\cu J$, we have
$$\l\psi(\vec{a})\rr=\{\omega\in\Omega:\cu M\models\psi(\vec{a}(\omega))\}\in\cu F.$$
\item $\cu F$ is equal to the set of all events
$ \l\psi(\vec{a})\rr$
where $\psi(\vec{v})$ is a formula of $L$ and $\vec{a}$ is a tuple in $\cu J$.
\item  For each formula $\theta(u, \vec{v})$
of $L$ and tuple $\vec{b}$ in $\cu J$, there exists $a\in\cu J$ such that
$$ \l \theta(a,\vec{b})\rr=\l(\exists u\,\theta)(\vec{b})\rr.$$
\item On $\cu J$, the distance predicate $d_\BK$ defines the pseudo-metric
$$d_\BK(a,b)= \mu \l a\neq b\rr .$$
\item On $\cu F$, the distance predicate $d_\BB$ defines the pseudo-metric
$$d_\BB({\sa B},{\sa C})=\mu ( {\sa B}\triangle {\sa C}).$$
\end{enumerate}
\end{fact}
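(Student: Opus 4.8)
The plan is to reconstruct the argument behind Proposition 2.1.10 of [AGK1], combining the Ben Yaacov--Keisler axiomatisation of randomizations with the representation theory for models of $T^R$, arranged so as to apply to the big model. First I would pin down $T^R$: it is the continuous $L^R$-theory axiomatised by the \emph{randomization axioms} --- the event sort $\BB$ is an atomless probability algebra with measure $\mu$ and metric $d_\BB(\sa B,\sa C)=\mu(\sa B\triangle\sa C)$; the assignment $\varphi\mapsto\l\varphi(\cdot)\rr$ is a Boolean homomorphism ($\l\neg\varphi\rr=\neg\l\varphi\rr$, $\l\varphi\wedge\psi\rr=\l\varphi\rr\sqcap\l\psi\rr$, and $\l\varphi(\vec x)\rr=\top$ whenever $T\models\forall\vec v\,\varphi$); a \emph{fullness} schema, $\inf_a d_\BB(\l\theta(a,\vec b)\rr,\l(\exists u\,\theta)(\vec b)\rr)=0$ for each $\theta$; and an \emph{event} schema asserting that the events $\l\psi(\vec a)\rr$ are dense in $\BB$. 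That $T^R$ is satisfiable is witnessed by a concrete model: fix $\cu M\models T$ and a complete atomless probability space $(\Omega,\cu F_0,\mu)$; let $\cu J$ be the closure of the constant functions $\Omega\to M$ under measurable case-splitting (given $a,b\in\cu J$ and $\sa B\in\cu F_0$, adjoin the function equal to $a$ on $\sa B$ and to $b$ off $\sa B$) and under the maximality operation of condition (5); let $\cu F$ be the $\sigma$-algebra generated by the resulting events $\l\psi(\vec a)\rr$; one then checks that the reduction of this pre-structure is a model of $T^R$.

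Next I would establish completeness of $T^R$, so that its big model $\cu N=(\cu K,\cu E)$ exists and is unique up to isomorphism. The mechanism is that, modulo $T^R$, every $L^R$-formula reduces to a continuous combination of terms $\mu(\l\varphi(\vec x)\rr)$ with $\varphi$ first order; combined with the completeness of $T$ and of the theory of atomless probability algebras, this forces any two models of $T^R$ to be elementarily equivalent (alternatively one can run a direct back-and-forth between separable elementary substructures, using fullness for the sort $\BK$ and the event schema for the sort $\BB$). Uniqueness of $T^R$ among complete $L^R$-theories with the stated property is then formal: any such theory has a model of the form described below, hence is elementarily equivalent to the concrete model just built, hence equals $T^R$.

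The substantive step is to produce the concrete presentation of $\cu N$ itself. I would invoke the representation theorem for models of $T^R$: every model is isometrically isomorphic to the reduction of a pre-complete structure $\cu P=(\cu J,\cu F)$ with $\cu J$ a set of functions into a model of $T$, $\cu F$ a $\sigma$-algebra of events, and $\l\psi(\vec a)\rr$ interpreted pointwise as in condition (3). Applied to the saturated $\cu N$ of cardinality $\upsilon$, this immediately yields conditions (2), (3), (6) and (7) (the last two being the defining formulas for $d_\BK$ and $d_\BB$). Replacing $(\Omega,\cu F,\mu)$ by a complete space with the same measure algebra --- for instance a suitably rich Loeb space --- secures (1). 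And a saturation argument lets one take the target model of $T$ to be the big model $\cu M$: were it strictly smaller, $\cu N$ would omit the $L^R$-type expressing ``there is a random element whose values realise a given first-order type over finitely many named functions that is consistent with $T$ but unrealised in the smaller model'', contradicting $\upsilon$-saturation.

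The hard part, I expect, is upgrading the two \emph{approximate} schemata (fullness and the event axiom) to the \emph{exact} equalities required by (4) and (5) in the big model: one must show that every $\sa B\in\cu F$ is literally $\l\psi(\vec a)\rr$ for some $\psi,\vec a$, not merely a $d_\BB$-limit of such, and that some genuine $a\in\cu J$ satisfies $\l\theta(a,\vec b)\rr=\l(\exists u\,\theta)(\vec b)\rr$ rather than only an approximating sequence. For this I would use $\aleph_1$-saturation of the pre-complete presentation of $\cu N$: the countable partial type $\{\,d_\BB(\sa B,\l\psi(\vec a)\rr)\le 1/n:n\ge 1\,\}$ (respectively the analogous type for fullness) is finitely satisfiable over the relevant parameters, hence realised, and one patches the realisation by a single measurable gluing to obtain the exact witness. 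Collecting (1)--(7) then completes the verification.
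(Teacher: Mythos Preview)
The paper does not prove this statement; it cites it as a consequence of Proposition~2.1.10 in [AGK1]. So there is no in-paper proof to compare against, and your task amounts to reconstructing that external argument.

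Your outline is sound in its architecture --- axiomatise $T^R$ via the Ben~Yaacov--Keisler randomization axioms, prove completeness, and represent the big model concretely --- but the final step has a gap. You propose to pass from the \emph{approximate} fullness and event schemata to the \emph{exact} conditions (4) and (5) by ``$\aleph_1$-saturation of the pre-complete presentation of $\cu N$''. Saturation is a property of the model $\cu N$, not of the pre-model $\cu P$; what it delivers lives in the reduction. For (5), saturation gives $\bo a\in\cu K$ with $d_\BB(\l\theta(\bo a,\vec{\bo b})\rr,\l(\exists u\,\theta)(\vec{\bo b})\rr)=0$ in $\cu E$, and lifting to a representative $a\in\cu J$ yields only $\mu\bigl(\l(\exists u\,\theta)(\vec b)\rr\setminus\l\theta(a,\vec b)\rr\bigr)=0$. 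That residual null set need not be empty, and your ``single measurable gluing'' cannot repair it: to glue in a witness on the null set you would need $\cu J$ already to contain a function realising $\theta$ there, which is precisely what is at issue. The same objection applies to~(4).

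The repair --- and what the construction in [AGK1] actually does --- is not to take an off-the-shelf representation and then upgrade it, but to \emph{build} $\cu P$ so that (4) and (5) hold by design. You essentially describe this for your witness to satisfiability in your first paragraph: close a starting family of functions $\Omega\to M$ under measurable gluing and under the exact Skolem-type operation of (5), and then \emph{define} $\cu F$ to be the set of events $\l\psi(\vec a)\rr$ so produced (checking it is a $\sigma$-algebra). The point is to run this same closure construction at the scale of the big model, using saturation of $\cu N$ (and of $\cu M$) only to ensure that the resulting pre-structure is rich enough that its reduction is $\cu N$ rather than some smaller model of $T^R$.
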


\begin{fact}  \label{f-glue}  (Lemma 2.1.8 in [AGK1])
In the  big model $\cu N=(\cu K,\cu E)$ of $T^R$, for each  $\bo a ,\bo b \in\cu K$ and ${\sa B}\in\cu E$,
there is an element $\bo c \in\cu K$ that agrees with $\bo a $ on ${\sa B}$ and agrees with $\bo b $ on $\neg{\sa B}$,
that is, ${\sa B}\sqsubseteq\l \bo c =\bo a \rr$ and $(\neg{\sa B})\sqsubseteq\l \bo c =\bo b \rr$.
\end{fact}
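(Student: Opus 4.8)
The plan is to read the statement directly off the concrete representation of the big model provided by Fact \ref{f-neat}. Write $\cu N=(\cu K,\cu E)$ as the reduction of a pre-complete structure $\cu P=(\cu J,\cu F)$ carrying a complete atomless probability space $(\Omega,\cu F,\mu)$. First I would fix representatives $a,b\in\cu J$ of $\bo a,\bo b$, and, using clause (4) of Fact \ref{f-neat}, fix a representative of $\sa B$ of the form $B=\l\psi(\vec e)\rr$ for some first order formula $\psi(\vec w)$ of $L$ and some tuple $\vec e$ in $\cu J$; by clause (3), $B=\{\omega\in\Omega:\cu M\models\psi(\vec e(\omega))\}$.

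Next I would introduce the $L$-formula $\theta(u,x,y,\vec w)$ given by $(\psi(\vec w)\wedge u=x)\vee(\neg\psi(\vec w)\wedge u=y)$, which is a bona fide first order formula since it is built from $\psi$, equality, and Boolean connectives. As $\cu M\models(\forall x)(\forall y)(\forall\vec w)(\exists u)\,\theta(u,x,y,\vec w)$, we have $\l(\exists u\,\theta)(a,b,\vec e)\rr=\Omega$. Applying clause (5) of Fact \ref{f-neat} to $\theta$ and the tuple $(a,b,\vec e)$ in $\cu J$ yields an element $c\in\cu J$ with $\l\theta(c,a,b,\vec e)\rr=\l(\exists u\,\theta)(a,b,\vec e)\rr=\Omega$. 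By clause (3), this means $\cu M\models\theta(c(\omega),a(\omega),b(\omega),\vec e(\omega))$ for every $\omega\in\Omega$; hence $c(\omega)=a(\omega)$ whenever $\omega\in B$ and $c(\omega)=b(\omega)$ whenever $\omega\notin B$. Thus $B\subseteq\l c=a\rr$ and $\neg B\subseteq\l c=b\rr$ in $\cu F$. Taking $\bo c\in\cu K$ to be the class of $c$ and passing to the reduction $\cu N$, the equality event $\l\bo c=\bo a\rr$ is represented by $\l c=a\rr$ and similarly for $\bo b$, so $\sa B\sqsubseteq\l\bo c=\bo a\rr$ and $\neg\sa B\sqsubseteq\l\bo c=\bo b\rr$, which is the conclusion.

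I do not expect a genuine obstacle; the single point that needs care is the reason the ``glued'' function $c$ is an \emph{admissible} random variable, i.e.\ actually lies in $\cu J$ rather than being merely a pointwise patch of $a$ and $b$. This is precisely what clause (5) of Fact \ref{f-neat} supplies: the definable witness it produces agrees with the naive pointwise gluing of $a$ and $b$ along $B$. The rest --- verifying that $\theta$ has the intended free variables, that $(a,b,\vec e)$ is a legitimate tuple in $\cu J$, and that $\sqsubseteq$ and the equality-event functions transfer correctly between $\cu P$ and its reduction --- is routine bookkeeping.
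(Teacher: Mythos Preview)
Your argument is correct. The paper does not give its own proof of this statement; it is merely quoted as Lemma 2.1.8 of [AGK1]. Your derivation from Fact \ref{f-neat} is exactly the intended one: use clause (4) to express a representative of $\sa B$ as $\l\psi(\vec e)\rr$, build the gluing formula $\theta(u,x,y,\vec w)$, and invoke clause (5) to obtain $c\in\cu J$ with $\l\theta(c,a,b,\vec e)\rr=\Omega$, which forces $c$ to agree with $a$ on $B$ and with $b$ on $\neg B$. There is nothing to add.
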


\begin{df}  A first order or continuous theory  has $\acl=\dcl$ if $\acl(A)=\dcl(A)$ for every set $A$ in every model of the theory.
\end{df}

For example, any first order theory $T$ with a definable linear ordering has $\acl=\dcl$.

\begin{fact}  \label{f-acl=dcl}  ([AGK1], Proposition 3.3.7, see also [Be2])

In the big model $\cu N$ of $T^R$, $\acl_\BB( A)=\dcl_\BB( A)$ and $\acl( A)=\dcl( A)$.  Thus $T^R$ has $\acl=\dcl.$
\end{fact}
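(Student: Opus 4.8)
The plan is to use the standard description of algebraic and definable closure in the big model $\cu N=(\cu K,\cu E)$ of $T^R$: since $\cu N$ is $\upsilon$-saturated and strongly $\upsilon$-homogeneous, for a small set $A$ and an element $b$ of either sort we have $b\in\dcl(A)$ iff $b$ is fixed by every $\sigma\in\operatorname{Aut}(\cu N/A)$, while $b\in\acl(A)$ iff the orbit $\{\sigma b:\sigma\in\operatorname{Aut}(\cu N/A)\}$ --- which is precisely the set of realizations of $\tp(b/A)$ --- is compact in the metric $d$; in particular $b\notin\acl(A)$ as soon as that orbit contains an infinite, uniformly separated subfamily. Since $\dcl(A)\subseteq\acl(A)$ always holds, it therefore suffices to prove: for every small $A\subseteq\cu N$ and every $b\notin\dcl(A)$ there are $\varepsilon>0$ and infinitely many $A$-conjugates $b=b_0,b_1,b_2,\dots$ of $b$ with $d(b_m,b_n)\ge\varepsilon$ for all $m\ne n$. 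Specializing to $b\in\cu E$ yields $\acl_\BB(A)=\dcl_\BB(A)$, and that $T^R$ then has $\acl=\dcl$ in every model follows by a routine absoluteness argument, using that in continuous logic $\acl$ has countable character and every small model embeds elementarily into $\cu N$.

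To build the family, fix $b\notin\dcl(A)$ and choose $\sigma\in\operatorname{Aut}(\cu N/A)$ with $d(b,\sigma b)=\varepsilon_0>0$; set $b'=\sigma b$, so that $b'\equiv_A b$. As automorphisms of $\cu N$ preserve $\mu$, the event $\sa E$ on which $b$ and $b'$ disagree has $\mu(\sa E)=\varepsilon_0$; here $\sa E=\sa B\triangle\sa B'$ if $b=\sa B\in\cu E$, and $\sa E=\l\bo a\ne\bo a'\rr$ if $b=\bo a\in\cu K$. Because $\cu N$ is saturated and its probability space is atomless, we may pick events $\sa F_1,\sa F_2,\dots\in\cu E$ that are jointly independent, each of measure $\tfrac12$, and independent from the small closed subalgebra $\mathfrak B\subseteq\cu E$ generated by $A\cup\{b,b'\}$ (which contains $\sa E$); put $\sa F_0=\bot$. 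Using Fact~\ref{f-glue}, let $b_n$ be the element, resp.\ event, that agrees with $b'$ on $\sa F_n$ and with $b$ on $\neg\sa F_n$, so $b_0=b$. Since $b\ne b'$ throughout $\sa E$ while $b=b'$ off $\sa E$, a computation in the probability space gives $d(b_m,b_n)=\mu\big((\sa F_m\triangle\sa F_n)\sqcap\sa E\big)=\tfrac12\mu(\sa E)=\tfrac12\varepsilon_0=:\varepsilon$ for all $m\ne n$ (using that $\sa F_m\triangle\sa F_n$ is independent from $\sa E$). So the $b_n$ form an infinite $\varepsilon$-separated family.

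It remains to verify that each $b_n$ realizes $\tp(b/A)$, and this is where the structure theory of $T^R$ is needed; I expect it to be the main obstacle. By the analysis of types in $T^R$ in [AGK1] (which also underlies Fact~\ref{f-neat}), the type over $A$ of an event $\sa B$, resp.\ of a random element $\bo a$, is determined by its conditional distribution over the subalgebra $\mathfrak B_A\subseteq\cu E$ generated by $A$ --- for an event, by $\mathbb E[\mathbf 1_{\sa B}\mid\mathfrak B_A]$; for a random element, by the family $\mathbb E[\mathbf 1_{\l\varphi(\bo a,\vec c)\rr}\mid\mathfrak B_A]$ with $\varphi\in L$ and $\vec c\in A^{<\BN}$ --- together with atomlessness of the ambient space. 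Since $b'\equiv_A b$, the two share the same conditional distribution over $\mathfrak B_A$; and because each $\sa F_n$ is independent from $\mathfrak B_A$ and from $b,b'$, passing to $b_n$ (which copies $b'$ on $\sa F_n$ and $b$ on $\neg\sa F_n$) leaves this conditional distribution unchanged, so $b_n\equiv_A b$; equivalently one exhibits $\tau_n\in\operatorname{Aut}(\cu N/A)$ acting as $\sigma$ ``on $\sa F_n$'' and as the identity ``on $\neg\sa F_n$'' with $b_n=\tau_n b$. For events this is especially transparent: an event with the same conditional measure over $\mathfrak B_A$ as $\sa B$ has the same type over $A$, and if $\sa B\notin\dcl_\BB(A)$ then $\mathbb E[\mathbf 1_{\sa B}\mid\mathfrak B_A]$ is not almost surely $\{0,1\}$-valued, so already the set of events with that fixed conditional measure fails to be totally bounded; the random-element case follows the same scheme, with Fact~\ref{f-glue} assembling the conjugates as above.
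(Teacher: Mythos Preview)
The paper does not prove this statement at all: it is recorded as a Fact with a citation to [AGK1], Proposition~3.3.7 (and [Be2]), so there is no in-paper argument to compare against. Your approach --- take $b\notin\dcl(A)$, find an $A$-conjugate $b'$ at positive distance, and then use atomlessness to glue $b$ and $b'$ along an infinite family of mutually independent events $\sa F_n$ (each independent from $\dcl_\BB(Abb')$) to produce an infinite $\varepsilon$-separated set of $A$-conjugates --- is exactly the natural one, and is in the spirit of Ben~Yaacov's argument alluded to by the citation.

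The one point that deserves a sharper justification is the claim $b_n\equiv_A b$. Your conditional-expectation argument is the right one, but it really rests on quantifier elimination for $T^R$ (from [BK]): for $A\subseteq\cu K$, the type of $\bo a$ over $A$ is determined by the values $\mu(\l\varphi(\bo a,\vec c)\rr)$ for first-order $\varphi$ and $\vec c\in A^{<\BN}$, and these are preserved by the gluing since
\[
\mu(\l\varphi(\bo a_n,\vec c)\rr)=\mu(\l\varphi(\bo a',\vec c)\rr\sqcap\sa F_n)+\mu(\l\varphi(\bo a,\vec c)\rr\sqcap\neg\sa F_n)=\tfrac12\mu(\l\varphi(\bo a',\vec c)\rr)+\tfrac12\mu(\l\varphi(\bo a,\vec c)\rr)=\mu(\l\varphi(\bo a,\vec c)\rr),
\]
using independence of $\sa F_n$ from $\dcl_\BB(Abb')$ and $\bo a'\equiv_A\bo a$; the event case is the same with $\sa B\triangle\sa B'$ in place of $\l\bo a\neq\bo a'\rr$. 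By contrast, your alternative formulation ``exhibit $\tau_n\in\operatorname{Aut}(\cu N/A)$ acting as $\sigma$ on $\sa F_n$ and as the identity on $\neg\sa F_n$'' is not obviously well-defined (an automorphism of $\cu N$ need not respect the partition $\{\sa F_n,\neg\sa F_n\}$ in any literal sense), and making it precise would itself require the type calculation above together with strong homogeneity; so it is best dropped in favor of the direct computation. With that clarification your proof is complete.
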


As a corollary, we obtain the following characterization of algebraic independence in $\cu N$.

\begin{cor}  \label{c-alg-indep} In the big model $\cu N$ of $T^R$,  $ A\ind[a]_C B$ if and only if
$$[\dcl(AC)\cap\dcl(BC)=\dcl(C)] \wedge [\dcl_\BB(AC)\cap\dcl_\BB(BC)=\dcl_\BB(C)].$$
\end{cor}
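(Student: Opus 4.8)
The plan is to read the corollary straight off the definition of $\ind[a]$ together with Fact~\ref{f-acl=dcl}, once one observes that algebraic closure in the two-sorted model $\cu N$ can be computed one sort at a time. Recall that, by definition, $A\ind[a]_CB$ means $\acl(AC)\cap\acl(BC)=\acl(C)$, where $\acl$ is the algebraic closure operator of $\cu N$ and $A,B,C$ are allowed to contain elements of both sorts.

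The first step is to record the sortwise decomposition: for any small $X\subseteq\cu N$ we have $\acl(X)=\acl_\BK(X)\sqcup\acl_\BB(X)$, where $\acl_\BK(X)$ and $\acl_\BB(X)$ denote the sets of sort-$\BK$, resp.\ sort-$\BB$, elements algebraic over $X$, and $\acl_\BK$ is what is written simply $\acl$ in this paper. This holds because the type of an element over $X$ decides its sort, so the realization set of such a type lies entirely in one sort, and ``algebraic over $X$'' therefore makes sense sort by sort. Since the sorts $\BK$ and $\BB$ are disjoint, an equality between subsets of $\cu N$ holds if and only if it holds after intersecting with $\BK$ and after intersecting with $\BB$; applying this to $\acl(AC)\cap\acl(BC)=\acl(C)$ shows that $A\ind[a]_CB$ holds if and only if both
$$\acl_\BK(AC)\cap\acl_\BK(BC)=\acl_\BK(C)$$
and
$$\acl_\BB(AC)\cap\acl_\BB(BC)=\acl_\BB(C).$$
The second step is then purely formal: apply Fact~\ref{f-acl=dcl} three times, with its $A$ replaced by $AC$, by $BC$, and by $C$, to rewrite each $\acl_\BK$ as $\dcl$ and each $\acl_\BB$ as $\dcl_\BB$. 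The two displays become exactly the two conjuncts in the statement, and since each step above was a biconditional, this completes the proof.

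I do not expect any real obstacle: the content is entirely in Fact~\ref{f-acl=dcl}, which is quoted, and in the sortwise decomposition of $\acl$, which is a general property of multi-sorted continuous structures rather than anything specific to randomizations. The only things to be careful about are the (harmless) possibility that $A,B,C$ are of mixed sort and the routine check that intersection and equality of subsets of $\cu N$ commute with passing to a single sort; one may also note in passing that the inclusions $\dcl(C)\subseteq\dcl(AC)\cap\dcl(BC)$ and $\dcl_\BB(C)\subseteq\dcl_\BB(AC)\cap\dcl_\BB(BC)$ are automatic from monotonicity of the closure operators, so that in practice each conjunct amounts to a single containment.
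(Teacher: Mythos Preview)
Your proposal is correct and follows essentially the same approach as the paper: the paper's proof is the one-liner ``By the definition of algebraic independence in the two-sorted metric structure $\cu N$ and Fact~\ref{f-acl=dcl},'' and you have simply unpacked what that means, namely the sortwise decomposition of $\acl$ followed by three applications of Fact~\ref{f-acl=dcl}. One small remark: immediately after this corollary the paper fixes the convention that $A,B,C$ denote small subsets of $\cu K$, so your caveat about mixed-sort parameters is not needed later, but it does no harm here.
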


\begin{proof}  By the definition of algebraic independence in the two-sorted metric structure $\cu N$ and Fact \ref{f-acl=dcl}.
\end{proof}

From now on we will work within the big model $\cu N=( {\cu K}, {\cu E})$ of $T^R$.
By saturation, $\cu K$ and $\cu E$ are large.  Hereafter, $A, B, C$ will always denote small subsets of $\cu K$, and $\cu N_A$ will denote
the expansion of $\cu N$ formed by adding a constant symbol for each $\bo a\in A$.
We will write $\dcl, \acl$ for $\dcl^{\cu N}, \acl^{\cu N}$, and $\ind[a]$ will denote the algebraic independence relation in $\cu N$.

For each element $\bo b\in {\cu K}$, we will also choose once and for all a \emph{representative} $b\in\cu J$ such that
the image of $b$ under the reduction map is $\bo b$.  It follows that for each first order formula $\varphi(\vec v)$,
$\l\varphi(\vec{\bo a})\rr$ in $\cu N$ is the image of $\l\varphi(\vec a)\rr$ in $\cu P$ under the reduction map.
Note that any two representatives of an element $\bo b\in\cu K$ agree except on a set of measure zero.

For any small $A\subseteq\cu K$ and each $\omega\in\Omega$, we define
$$ A(\omega)=\{a(\omega)\mid \bo a\in  A\},$$
and let $\cl( A)$ denote the closure of $ A$ in the metric $d_\BK$.  When $\cu A\subseteq {\cu E}$,
$\cl(\cu A)$ denotes the closure of $\cu A$ in the metric $d_\BB$, and
$\sigma(\cu A)$ denotes the smallest $\sigma$-subalgebra of $ {\cu E}$ containing $\cu A$.
Since the cardinality $\upsilon$ of $\cu N$ is inaccessible, whenever $A\subseteq\cu K$ is small, the closure $\cl(A)$ and
the set of $n$-types over $A$ is small.  Also, whenever $\cu A\subseteq\cu E$ is small, the closure $\cl(\cu A)$ is small.

\subsection{Definability in $T^R$}

In this section we review some notions and results about definability that we will need from the paper [AGK1].
We write $\dcl_\BB( A)$ for the set of elements of sort $\BB$ that are definable over $ A$ in $\cu N$,
and write $\dcl( A)$ for the set of elements of sort $\BK$ that are definable over $ A$ in $\cu N$.
Similarly for $\acl_\BB( A)$ and $\acl( A)$.

\begin{df}  We say that an event $\sa E$ is \emph{first order definable over $A$}, in symbols $\sa E\in\fo_\BB(A)$, if
$\sa E=\l\theta(\vec{\bo a})\rr$ for some formula $\theta$ of $L$ and some tuple $\vec{\bo a}\in A^{<\BN}$.
\end{df}

\begin{df}
We say that $\bo b$ is \emph{first order definable over $ A$}, in symbols $\bo b\in\fo( A)$, if there is a functional formula
$\varphi(u,\vec v)$ and a tuple $\vec{\bo a}\in {A}^{<\BN}$ such that
$\l \varphi(b,\vec{a})\rr=\top$.
\end{df}

\begin{fact} \label{f-separable}  ([AGK1], Theorems 3.1.2 and  3.3.6)
$$\dcl_\BB( A)=\cl(\fo_\BB( A))=\sigma(\fo_\BB(A))\subseteq\cu E,\qquad \dcl( A)=\cl(\fo( A))\subseteq\cu K.$$
\end{fact}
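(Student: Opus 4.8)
The plan is to prove each displayed identity by establishing its two inclusions separately, treating the event sort $\BB$ first and then bootstrapping to the random-variable sort $\BK$.

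For the event sort, the inclusion $\cl(\fo_\BB(A))=\sigma(\fo_\BB(A))\subseteq\dcl_\BB(A)$ is the routine half. Since $\l\theta_1(\vec{\bo a})\rr\sqcap\l\theta_2(\vec{\bo b})\rr=\l(\theta_1\wedge\theta_2)(\vec{\bo a},\vec{\bo b})\rr$ and $\neg\l\theta(\vec{\bo a})\rr=\l\neg\theta(\vec{\bo a})\rr$, the set $\fo_\BB(A)$ is a Boolean subalgebra of the probability algebra $\cu E$. In a probability algebra a countable join is the $d_\BB$-limit of its finite subjoins, so the $d_\BB$-closure of any Boolean subalgebra is a sub-$\sigma$-algebra, while conversely every sub-$\sigma$-algebra of a probability algebra is $d_\BB$-closed; hence $\cl(\fo_\BB(A))=\sigma(\fo_\BB(A))$. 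Each generator $\l\theta(\vec{\bo a})\rr$ is the value of the function symbol $\l\theta(\cdot)\rr$ at constants of $\cu N_A$, so it lies in $\dcl_\BB(A)$; as $\dcl_\BB(A)$ is topologically closed and closed under the continuous Boolean operations and under countable joins, it contains $\sigma(\fo_\BB(A))$.

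The substantive half is $\dcl_\BB(A)\subseteq\sigma(\fo_\BB(A))$, which I would prove contrapositively by showing that an event $\sa E\notin\sigma(\fo_\BB(A))$ is moved by an automorphism of $\cu N$ fixing $A$ pointwise, whence $\sa E\notin\dcl_\BB(A)$. Represent $\cu N$ as the reduction of a pre-complete $\cu P=(\cu J,\cu F)$ over an atomless space $(\Omega,\cu F,\mu)$ as in Fact~\ref{f-neat}, and let $\cu F_0$ be the sub-$\sigma$-algebra generated by the sets $\l\varphi(\vec a)\rr$ with $\vec{\bo a}\in A^{<\BN}$; the reduction map is a $\sigma$-homomorphism carrying $\l\varphi(\vec a)\rr$ to $\l\varphi(\vec{\bo a})\rr$, so it maps $\cu F_0$ onto $\sigma(\fo_\BB(A))$, and any representative $\sa E_*\in\cu F$ of $\sa E$ satisfies $\sa E_*\notin\cu F_0$ mod null. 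Then $\mathbb{E}[\mathbf{1}_{\sa E_*}\mid\cu F_0]$ is not $\{0,1\}$-valued, and since the conditional measures over $\cu F_0$ are atomless — which should follow from atomlessness of $\mu$ together with the saturation of $\cu N$ — there is a second event $\sa E_{**}\neq\sa E$ with the same joint distribution with every member of $\fo_\BB(A)$ as $\sa E$. The crux is the claim that this forces $\sa E_{**}\equiv_A\sa E$, so that saturation of $\cu N$ yields an automorphism fixing $A$ and sending $\sa E$ to $\sa E_{**}$. I expect this claim to be the main obstacle: it asserts that the type of an event over $A$ is determined by its joint distribution with the first-order-definable events over $A$, and its proof conceals an amalgamation/homogeneity argument peculiar to randomizations (it can also be read off from the characterization of definable predicates in $T^R$).

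For the random-variable sort, $\fo(A)\subseteq\dcl(A)$ because if $\varphi(u,\vec v)$ is functional and $\l\varphi(b,\vec a)\rr=\top$, then $\l\varphi(\bo x,\vec{\bo a})\rr=\l\bo x=\bo b\rr$, so $d_\BK(\bo x,\bo b)=1-\mu\l\varphi(\bo x,\vec{\bo a})\rr$ is a definable predicate over $A$ and $\bo b\in\dcl(A)$; as $\dcl(A)$ is closed, $\cl(\fo(A))\subseteq\dcl(A)$. For $\dcl(A)\subseteq\cl(\fo(A))$, let $\bo b\in\dcl(A)$. The fibrewise analysis above, applied now to the $\BK$-type of $\bo b$ over $A$, gives $b(\omega)\in\dcl^{\cu M}(A(\omega))$ for a.e.\ $\omega$ — otherwise a measurable selection of other realizations of the fibre types, reassembled via Fact~\ref{f-glue}, would produce $\bo b'\equiv_A\bo b$ with $\bo b'\neq\bo b$. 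Enumerating the functional $L$-formulas together with parameter assignments from $A$ as $(\varphi_k,\vec a_k)_{k\in\BN}$, the events $\sa D_k=\{\omega:\varphi_k$ is the first pair on the list defining $b(\omega)$ over $A(\omega)\}$ cover $\Omega$ up to null and lie in $\dcl_\BB(A\bo b)=\dcl_\BB(A)=\cl(\fo_\BB(A))$ by the event case. Approximating each $\sa D_k$ (for $k\le n$) by an $\fo_\BB(A)$-event, totalizing the $\varphi_k$ to genuine functional formulas, and gluing the resulting elements by Fact~\ref{f-glue} and fullness (Fact~\ref{f-neat}(5)) produces members of $\fo(A)$ converging to $\bo b$ in $d_\BK$ as $n\to\infty$, so $\bo b\in\cl(\fo(A))$.
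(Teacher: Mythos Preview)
The paper does not prove this statement at all: it is recorded as a \emph{Fact} with a bare citation to Theorems~3.1.2 and~3.3.6 of [AGK1], so there is no in-paper argument to compare your proposal against.

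That said, your outline is broadly in the spirit of how such results are proved in [AGK1]. The event-sort direction $\sigma(\fo_\BB(A))\subseteq\dcl_\BB(A)$ and the equality $\cl(\fo_\BB(A))=\sigma(\fo_\BB(A))$ are indeed routine, and you have identified the genuine content correctly: for the reverse inclusion one must show that the type of an event over $A$ in $T^R$ is determined by its joint distribution with the events in $\fo_\BB(A)$. You flag this as ``the main obstacle'' but do not resolve it. In the literature this follows from the quantifier-elimination theorem for $T^R$ proved in [BK], which says that every $L^R$-formula is equivalent to one built from terms $\mu\l\varphi(\cdot)\rr$; granted that, two events with the same joint distribution against $\fo_\BB(A)$ have the same type over $A$, and your automorphism argument goes through. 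Without invoking that result, your sketch has a real gap at exactly the point you name.

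For the $\BK$-sort, your strategy---show $\bo b\in\dcl(A)$ implies $b(\omega)\in\dcl^{\cu M}(A(\omega))$ a.e., partition $\Omega$ by the first functional witness, and approximate---is essentially the route taken in [AGK1]. One caution: you silently use that the events $\sa D_k$ lie in $\dcl_\BB(A\bo b)$; this is fine once the event-sort case is established, but you should make explicit that each $\sa D_k$ is a countable Boolean combination of events of the form $\l\varphi_j(\bo b,\vec{\bo a}_j)\rr$ and $\l(\exists^{=1}u)\varphi_j(u,\vec{\bo a}_j)\rr$, hence in $\sigma(\fo_\BB(A\bo b))$. The step ``otherwise a measurable selection \dots\ would produce $\bo b'\equiv_A\bo b$ with $\bo b'\neq\bo b$'' again leans on the type-determination fact above and on a measurable-selection/fullness argument that deserves more than a parenthetical.
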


It follows that whenever $A$ is small, $\dcl(A)$ and $\dcl_\BB(A)$ are small.

\begin{rmk}  \label{r-dcl-B}  For each small $A$,
$$\fo_\BB(\fo(A))=\fo_\BB(A),\quad\dcl_\BB(\dcl(A))=\dcl_\BB(A).$$
\end{rmk}

We will sometimes use the $\l \ldots\rr$ notation in a general setting.  Given a property $P(\omega)$, we write
$$\l P\rr=\{\omega\in\Omega\,:\,P(\omega)\}.$$

\begin{df} \label{d-pointwise-def}
 We say that $\bo b$ is \emph{pointwise definable over $A$}, in symbols $\bo b\in\dcl^\omega(A)$, if
$$\mu(\l\bo b\in\dcl^{\cu M}(A_0)\rr)=1$$
for some countable $A_0\subseteq A$.

We say that $\bo b$ is \emph{pointwise algebraic over $A$}, in symbols $\bo b\in\acl^\omega(A)$, if
$$\mu(\l \bo b\in\acl^{\cu M}(A_0)\rr)=1$$
for some countable $A_0\subseteq A$.
\end{df}

\begin{rmk}  \label{r-dcl-omega}
$\dcl^\omega$ and $\acl^\omega$ have countable character, that is,
$\bo b\in\dcl^\omega(A)$ if and only if $\bo b\in\dcl^\omega(A_0)$ for some countable $A_0\subseteq A$, and similarly for $\acl^\omega$.
\end{rmk}

The next result is a useful characterization of $\dcl(A)$.

\begin{fact}  \label{f-dcl3}  ([AGK1], Corollary 3.3.5) For any element $\bo b\in {\cu K}$,
$\bo{b}$ is definable over $ A$ if and only if:
\begin{enumerate}
\item $\bo b$ is pointwise definable over $A$;
\item $\fo_\BB(\bo b A)\subseteq\dcl_\BB(A).$
\end{enumerate}
\end{fact}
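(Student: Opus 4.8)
The plan is to prove the two implications separately: the forward one is soft, while the converse needs a measure‑theoretic gluing argument. For the forward direction, suppose $\bo b\in\dcl(A)$. Since $\bo bA\subseteq\dcl(A)$, Remark~\ref{r-dcl-B} together with Fact~\ref{f-separable} gives $\fo_\BB(\bo bA)\subseteq\dcl_\BB(\bo bA)\subseteq\dcl_\BB(\dcl(A))=\dcl_\BB(A)$, which is~(2). For~(1), use $\dcl(A)=\cl(\fo(A))$ from Fact~\ref{f-separable} to write $\bo b$ as a $d_\BK$-limit of a sequence $\bo b_n\in\fo(A)$; each $\bo b_n$ is witnessed by a functional $L$-formula over a finite tuple from $A$, so, collecting all those tuples into a countable $A_0\subseteq A$, we get $b_n(\omega)\in\dcl^{\cu M}(A_0(\omega))$ for almost every $\omega$ and every $n$. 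Passing to a subsequence along which $\sum_n\mu\l\bo b_n\neq\bo b\rr<\infty$ and applying Borel--Cantelli, $b(\omega)=b_n(\omega)$ for all large $n$, almost everywhere; hence $b(\omega)\in\dcl^{\cu M}(A_0(\omega))$ a.e., i.e.\ $\bo b\in\dcl^\omega(A)$.

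For the converse, assume (1) and (2). By Fact~\ref{f-separable} it suffices to show $\bo b\in\cl(\fo(A))$, that is, to produce for each $\varepsilon>0$ an element $\bo b'\in\fo(A)$ with $\mu\l\bo b\neq\bo b'\rr<\varepsilon$. Fix a countable $A_0=\{\bo a_0,\bo a_1,\dots\}\subseteq A$ witnessing~(1), and enumerate as $p=0,1,2,\dots$ all pairs consisting of an $L$-formula $\varphi_p(u,\vec v)$ functional in $T$ and a tuple $\vec{\bo a}_p$ from $A_0$ of matching length; put $G_p:=\l\varphi_p(\bo b,\vec{\bo a}_p)\rr$ and $F_p:=\l(\exists u\,\varphi_p)(\vec{\bo a}_p)\rr$. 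Then $\bigcup_pG_p$ has measure~$1$ by~(1); by~(2) and Fact~\ref{f-separable} each $G_p\in\fo_\BB(\bo bA)\subseteq\dcl_\BB(A)$, so, since $\dcl_\BB(A)$ is a $\sigma$-algebra, the disjointified events $G'_p:=G_p\setminus\bigcup_{q<p}G_q$ also lie in $\dcl_\BB(A)$ and still cover a set of full measure. For each $p$ choose, by Fact~\ref{f-neat}(5), an element $\bo g_p\in\cu K$ with $\l\varphi_p(\bo g_p,\vec{\bo a}_p)\rr=F_p$; since $\varphi_p$ is functional in $T$ and $G'_p\subseteq G_p\sqsubseteq F_p$, we have $g_p(\omega)=b(\omega)$ for almost every $\omega\in G'_p$.

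Now fix $N$ with $\mu(\Omega\setminus\bigcup_{p\le N}G'_p)<\varepsilon/2$. Because $\dcl_\BB(A)=\cl(\fo_\BB(A))$, each $G'_p$ with $p\le N$ is approximable in measure by an event of $\fo_\BB(A)$; intersecting with $F_p$ (harmless, since $G'_p\subseteq F_p$ and $\fo_\BB(A)$ is a Boolean algebra) and then disjointifying, we obtain pairwise disjoint $H_1,\dots,H_N\in\fo_\BB(A)$ with $H_p\subseteq F_p$ and $\sum_{p\le N}\mu(H_p\triangle G'_p)<\varepsilon/2$, and we set $H_0:=\Omega\setminus\bigcup_{p\ge1}H_p$. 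Write $H_p=\l\psi_p(\vec{\bo a})\rr$ for one tuple $\vec{\bo a}$ from $A$ containing every $\vec{\bo a}_p$ and a fixed $\bo a_*\in A$; replacing $\psi_p$ by $\psi_p\wedge\exists u\,\varphi_p$ for $p\ge1$ (which changes no event, as $H_p\subseteq F_p$) and $\psi_0$ by $\neg\bigvee_{p\ge1}\psi_p$, the $L$-formula
\[
\chi(u,\vec v)\ :=\ \bigvee_{p=1}^{N}\bigl(\psi_p(\vec v)\wedge\varphi_p(u,\vec v)\bigr)\ \vee\ \bigl(\psi_0(\vec v)\wedge u=v_*\bigr)
\]
is functional and total in $T$, so Fact~\ref{f-neat}(5) gives $\bo b'\in\cu K$ with $\l\chi(\bo b',\vec{\bo a})\rr=\top$, whence $\bo b'\in\fo(A)$; moreover $b'(\omega)=g_p(\omega)$ for almost every $\omega\in H_p$, hence $b'(\omega)=b(\omega)$ for almost every $\omega\in H_p\cap G'_p$ ($p\ge1$). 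Thus $\l\bo b\neq\bo b'\rr$ lies, up to a null set, inside $(\Omega\setminus\bigcup_{p\le N}G'_p)\cup\bigcup_{p\le N}(G'_p\triangle H_p)$, of measure $<\varepsilon$; letting $\varepsilon\to0$ yields $\bo b\in\cl(\fo(A))=\dcl(A)$.

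The main obstacle is exactly this construction of $\bo b'$: the events $G'_p$, which record which functional $L$-formula defines $\bo b$ on which part of $\Omega$, are only $\dcl_\BB(A)$-measurable rather than first‑order definable, so they must first be approximated by $\fo_\BB(A)$-events (using $\dcl_\BB(A)=\cl(\fo_\BB(A))$), and the local recipes $\bo g_p$ must then be spliced into a single element witnessed by \emph{one total functional} $L$-formula over $A$ --- which forces the bookkeeping that doctors the $\psi_p$ so that functionality and totality of $\chi$ hold in every model of $T$, not merely in $\cu N$. The results doing the heavy lifting are the representations $\dcl(A)=\cl(\fo(A))$ and $\dcl_\BB(A)=\cl(\fo_\BB(A))=\sigma(\fo_\BB(A))$ of Fact~\ref{f-separable} and the existential‑witnessing clause of Fact~\ref{f-neat}; equivalently, $\bo b'$ can be assembled by finitely many applications of the gluing lemma (Fact~\ref{f-glue}) to the $\bo g_p$ along the $H_p$.
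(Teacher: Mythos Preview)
The paper does not prove this statement; it is quoted as Fact~\ref{f-dcl3} from [AGK1], Corollary~3.3.5, so there is no in-paper argument to compare against. Your proof is correct and follows the natural line: reduce both directions to the descriptions $\dcl(A)=\cl(\fo(A))$ and $\dcl_\BB(A)=\sigma(\fo_\BB(A))=\cl(\fo_\BB(A))$ from Fact~\ref{f-separable}, and for the converse glue together local functional witnesses along a finite measurable partition approximated inside $\fo_\BB(A)$.

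Two minor points worth tightening. First, in the forward direction the Borel--Cantelli step is unnecessary: since $b_n(\omega)\in\dcl^{\cu M}(A_0(\omega))$ almost everywhere, one has $\l\bo b\notin\dcl^{\cu M}(A_0)\rr\subseteq\l\bo b\neq\bo b_n\rr$ up to a null set, hence $\mu\l\bo b\notin\dcl^{\cu M}(A_0)\rr\le d_\BK(\bo b,\bo b_n)\to 0$. Second, in the converse you disjointify the \emph{events} $H_p$ but must also make the \emph{formulas} $\psi_p$ pairwise inconsistent in $T$ (e.g.\ replace $\psi_p$ by $\psi_p\wedge\bigwedge_{q<p}\neg\psi_q$, which leaves the events unchanged since the $H_p$ were already disjoint) before forming $\chi$; otherwise $\chi$ need not be functional in $T$. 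You allude to this ``doctoring'' in your final paragraph, but it should be made explicit in the construction. The edge case $A=\emptyset$ (no $\bo a_*$ available) is handled by noting that the default value on $H_0$ can be supplied by any $\bo g_p$ already constructed, or by observing that when $\dcl^{\cu M}(\emptyset)=\emptyset$ condition~(1) forces the statement to be vacuous.
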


\begin{cor}  \label{c-pointwise-alg-def}
In $\cu N$ we always have
$$\acl(A)=\dcl(A)\subseteq\dcl^\omega(A)=\dcl^\omega(\dcl^\omega(A))\subseteq\acl^\omega(A)=\acl^\omega(\acl^\omega(A)).$$
\end{cor}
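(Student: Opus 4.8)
The plan is to establish the chain of equalities and inclusions in Corollary~\ref{c-pointwise-alg-def} piece by piece, reading off most of them from the facts collected in Section~3.

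First, the equality $\acl(A)=\dcl(A)$ is exactly Fact~\ref{f-acl=dcl}. Next, for $\dcl(A)\subseteq\dcl^\omega(A)$: by Fact~\ref{f-separable} we have $\dcl(A)=\cl(\fo(A))$, so it suffices to check $\fo(A)\subseteq\dcl^\omega(A)$ (since $\dcl^\omega(A)$ is closed — this follows because pointwise definability over a fixed countable $A_0$ is preserved under $d_\BK$-limits, as a countable union of measure-one sets still has measure one, or more carefully one invokes that $\dcl^\omega(A)$ agrees with $\dcl$ restricted to countable parameter sets composed coordinatewise). If $\bo b\in\fo(A)$, witnessed by a functional $\varphi(u,\vec v)$ and $\vec{\bo a}\in A^{<\BN}$ with $\l\varphi(b,\vec a)\rr=\top$, then for almost every $\omega$ we have $\cu M\models\varphi(b(\omega),\vec a(\omega))$, and since $\varphi$ is functional, $b(\omega)\in\dcl^{\cu M}(\vec a(\omega))$ a.s.; taking $A_0=\{\vec{\bo a}\}$ gives $\bo b\in\dcl^\omega(A)$. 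The idempotency $\dcl^\omega(A)=\dcl^\omega(\dcl^\omega(A))$ follows by a routine argument: if $\bo b\in\dcl^\omega(\dcl^\omega(A))$ via a countable $B_0\subseteq\dcl^\omega(A)$, each element of $B_0$ is pointwise definable over some countable subset of $A$; the countable union $A_0$ of these works, using that $\dcl^{\cu M}$ is idempotent in $\cu M$ (Fact~\ref{f-definableclosure}(1) applied in the first order structure $\cu M$, together with the fact that a countable intersection of measure-one sets has measure one). The inclusion $\dcl^\omega(A)\subseteq\acl^\omega(A)$ is immediate from $\dcl^{\cu M}\subseteq\acl^{\cu M}$ pointwise, and $\acl^\omega(A)=\acl^\omega(\acl^\omega(A))$ is proved exactly as the idempotency of $\dcl^\omega$, now using idempotency of $\acl^{\cu M}$.

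The one nontrivial step is making the "closure" arguments precise — specifically, verifying that $\dcl^\omega(A)$ and $\acl^\omega(A)$ are $d_\BK$-closed and that the coordinatewise reasoning for idempotency is valid despite the fact that representatives are only determined up to measure zero. The main obstacle is bookkeeping the countably many null sets: when one chains through a countable $A_0$ and wants "for almost every $\omega$, simultaneously $b_i(\omega)\in\dcl^{\cu M}(A_0(\omega))$ for all $i$," one needs that a countable union of null sets is null, which holds since $(\Omega,\cu F,\mu)$ is a complete probability space by Fact~\ref{f-neat}. For $d_\BK$-closedness of $\dcl^\omega(A)$: if $\bo b_n\to\bo b$ with each $\bo b_n$ pointwise definable over a countable $A_n\subseteq A$, pass to a subsequence with $d_\BK(\bo b_n,\bo b)\le 2^{-n}$ so $b_n(\omega)\to b(\omega)$ for a.e.\ $\omega$; but pointwise convergence in the discrete set $M$ means $b_n(\omega)=b(\omega)$ eventually, hence $b(\omega)\in\dcl^{\cu M}(A_0(\omega))$ a.s.\ where $A_0=\bigcup_n A_n$. (Here one uses that $d_\BK(a,b)=\mu\l a\ne b\rr$, so $d_\BK$-convergence passes to a.e.\ eventual equality along a fast subsequence.)

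Finally, I would remark that all the idempotency statements, though stated for arbitrary small $A$, reduce to the countable case by the countable character noted in Remark~\ref{r-dcl-omega}, which streamlines the argument: it is enough to prove each inclusion for countable $A$, where the measure-theoretic arguments above go through verbatim. This completes the proof modulo the routine null-set bookkeeping.
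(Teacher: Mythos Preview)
Your proposal is correct; the paper actually gives no proof at all for this corollary, treating it as an immediate consequence of the facts assembled just before it.  One simplification worth noting: your route to $\dcl(A)\subseteq\dcl^\omega(A)$ via Fact~\ref{f-separable} and a closure argument is more work than needed, since this inclusion is literally condition~(1) of Fact~\ref{f-dcl3} (which is why the corollary is placed immediately after that fact).  With that shortcut, the closure of $\dcl^\omega(A)$ under $d_\BK$-limits is not required for the corollary itself, though your Borel--Cantelli argument for it is correct and a useful observation in its own right.  The remaining steps---idempotency of $\dcl^\omega$ and $\acl^\omega$ via countable null-set bookkeeping and idempotency of $\dcl^{\cu M},\acl^{\cu M}$, and the trivial inclusion $\dcl^\omega\subseteq\acl^\omega$---are exactly what one would write out, and are presumably what the paper has in mind.
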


\subsection{Algebraic Independence in the Event Sort}

The ternary relation $\ind[a\BB] \ \ $ on the big model $\cu N$ of $T^R$  was introduced in the paper [AGK2] and will be useful here.  It is the
analogue of algebraic independence obtained by restricting the algebraic closures of sets to the event sort.

\begin{df}  \label{d-event-indep}
For small $A, B, C\subseteq \cu K$, define
 $$A\ind[a\BB]_C \ \ B\Leftrightarrow\acl_\BB(AC)\cap\acl_\BB(BC)=\acl_\BB(C).$$
\end{df}

\begin{rmk}  \label{r-acl=dcl}
By Fact \ref{f-acl=dcl}, for small $A, B, C\subseteq \cu K$, we have
 $$A\ind[a\BB]_C \ \ B\Leftrightarrow\dcl_\BB(AC)\cap\dcl_\BB(BC)=\dcl_\BB(C).$$
By Corollary \ref{c-alg-indep}, we also have
$$A\ind[a]_C B \Leftrightarrow (\dcl(AC)\cap\dcl(BC)=\dcl(C))\wedge A\ind[a\BB]_C \ B.$$
\end{rmk}

\begin{fact} \label{f-event-aB}  (Proposition 6.2.4 in [AGK2]).
In  $T^R$, the relation $\ind[a\BB] \ \ $ satisfies all the axioms for a countable independence relation except base monotonicity.
It also has symmetry and small local character.
\end{fact}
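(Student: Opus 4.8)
The plan is to view $\ind[a\BB]$ as the algebraic independence relation attached to the closure operator $D\mapsto\dcl_\BB(D)$ on subsets of $\cu K$ (legitimate since $\acl_\BB=\dcl_\BB$ in $\cu N$ by Fact~\ref{f-acl=dcl}), dispatch the formal axioms exactly as for $\ind[a]$ in Proposition~\ref{p-alg-indep} (following [Ad2]), and then carry out the two substantive pieces --- small local character and extension --- using the fact that the event sort of $\cu N$ is a measure algebra, so the usual $L^2$, conditional-expectation and martingale machinery is available.

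\emph{The formal axioms, countable character, symmetry.} Invariance holds because automorphisms of $\cu N$ preserve $\dcl_\BB$. Monotonicity, transitivity and normality follow purely formally from monotonicity of $\dcl_\BB$ together with the trivialities $\dcl_\BB(BC)=\dcl_\BB(B)$ when $C\subseteq B$ and $\dcl_\BB(ACC)=\dcl_\BB(AC)$ --- the same closure manipulations used for $\ind[a]$. Symmetry is immediate from the symmetric form of the definition recorded in Remark~\ref{r-acl=dcl}. For countable character, use $\dcl_\BB(D)=\cl(\fo_\BB(D))$ (Fact~\ref{f-separable}): if $\sa E\in\dcl_\BB(AC)\cap\dcl_\BB(BC)$ then $\sa E$ is a $d_\BB$-limit of events $\l\theta(\vec{\bo a},\vec{\bo c})\rr$ in which each $\vec{\bo a}$ is a finite tuple from $A$, so $\sa E\in\dcl_\BB(A_0C)$ for some countable $A_0\subseteq A$, whence $A_0\ind[a\BB]_C B$ gives $\sa E\in\dcl_\BB(C)$; the reverse inclusion is trivial. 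Base monotonicity is the excepted axiom.

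\emph{Small local character.} Fix small $A,B$ and $C_0\subseteq B$ with $|C_0|\le\lambda:=|A|+\aleph_0$. The quantitative input is that $\dcl_\BB$ does not raise density: since $|\fo_\BB(D)|\le|D|+\aleph_0$ and $\dcl_\BB(D)=\cl(\fo_\BB(D))$ (Fact~\ref{f-separable}), the $\sigma$-algebra $\dcl_\BB(D)$ has density character $\le|D|+\aleph_0$. Build an increasing chain $C_0\subseteq C_1\subseteq\cdots\subseteq B$ with $|C_n|\le\lambda$ recursively: given $C_n$, choose a $d_\BB$-dense $\mathcal D\subseteq\dcl_\BB(AC_n)$ of size $\le\lambda$ and let $\cu G_n$ be the $\sigma$-subalgebra of $\dcl_\BB(B)$ generated by all $\mathbb{E}[\mathbf{1}_{\sa F}\mid\dcl_\BB(B)]$ with $\sa F$ a finite intersection of members of $\mathcal D$; a tower-property computation shows $\dcl_\BB(B)$ is (probabilistically) conditionally independent of $\dcl_\BB(AC_n)$ over $\cu G_n$. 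Since $\cu G_n$ has density $\le\lambda$ and lies inside $\dcl_\BB(B)=\cl(\fo_\BB(B))$, pick $C_{n+1}\in[C_n,B]$ with $|C_{n+1}|\le\lambda$ and $\dcl_\BB(C_{n+1})\supseteq\cu G_n$ (adjoin to $C_n$ countably many elements of $B$ witnessing a dense subset of $\cu G_n$). Put $C=\bigcup_nC_n\in[C_0,B]$, so $|C|\le\lambda$. Enlarging the conditioning $\sigma$-algebra within $\dcl_\BB(B)$ preserves conditional independence, so for every $n$, $\dcl_\BB(B)$ is conditionally independent of $\dcl_\BB(AC_n)$ over $\dcl_\BB(C)$; letting $n\to\infty$ along the filtration $(\dcl_\BB(AC_n))_n$, which generates $\dcl_\BB(AC)=\cl(\fo_\BB(AC))$ because $\fo_\BB(AC)=\bigcup_n\fo_\BB(AC_n)$, a martingale-convergence argument yields that $\dcl_\BB(B)$ is conditionally independent of $\dcl_\BB(AC)$ over $\dcl_\BB(C)$. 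Finally, two $\sigma$-algebras conditionally independent over a common sub-$\sigma$-algebra $\cu G$ meet in exactly $\cu G$: an event in both equals its own conditional probability squared given $\cu G$, hence is $\cu G$-measurable. Since $\dcl_\BB(C)\subseteq\dcl_\BB(AC)\cap\dcl_\BB(B)=\dcl_\BB(AC)\cap\dcl_\BB(BC)$, this gives $\dcl_\BB(AC)\cap\dcl_\BB(BC)=\dcl_\BB(C)$, i.e.\ $A\ind[a\BB]_C B$. Local character with the least possible bound follows by Fact~\ref{f-countably-localchar}(i).

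\emph{Extension.} By Fact~\ref{f-fe-vs-ext}(i) and the axioms already verified (invariance, monotonicity, transitivity, normality, symmetry), it is enough to establish full existence: for all $A,B,C$ there is $A'\equiv_C A$ with $A'\ind[a\BB]_C B$. The strategy is to produce a copy $A'$ of $A$ over $C$ whose $\dcl_\BB$-events are conditionally independent of those of $B$ over $\dcl_\BB(C)$, since the intersection fact of the previous paragraph then forces $\dcl_\BB(A'C)\cap\dcl_\BB(BC)=\dcl_\BB(C)$. One builds $A'$ in the pre-complete representation $\cu P=(\cu J,\cu F)$ of Fact~\ref{f-neat}: enlarge $(\Omega,\cu F,\mu)$ by an independent complete atomless factor, transport onto it representatives of the elements of $A$ over the (unmoved) representatives of $C$ in a way preserving their joint law over $C$ --- hence, by Fact~\ref{f-neat}, the $T^R$-type over $C$ --- and then realize the resulting type back inside $\cu N$ using the saturation and homogeneity of $\cu N$ (the gluing Lemma~\ref{f-glue} being the basic move enabling such transfers). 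I expect this to be the main obstacle: in contrast to the other axioms, full existence is not a formal consequence of the closure operator or of abstract probability, and arranging the transported copy to both realize the correct type over $C$ in $\cu N$ and be conditionally independent of $B$ uses the specific amalgamation behaviour of randomizations.
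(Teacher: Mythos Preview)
The paper does not prove this statement; it is recorded as a Fact and cited verbatim from [AGK2], Proposition~6.2.4. So there is no in-paper argument to compare against. That said, your proposal is largely sound and in fact reconstructs the probabilistic substance that (via [AGK2] and [Be]) underlies the cited result.

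Your treatment of the formal axioms (invariance, monotonicity, transitivity, normality), symmetry, and countable character is correct and is exactly the closure-operator verification used for $\ind[a]$. Your small-local-character argument is correct and is essentially the content of Fact~\ref{f-event-dB} unpacked: you are constructing, by a L\"owenheim--Skolem style chain, a $\sigma$-subalgebra $\dcl_\BB(C)\subseteq\dcl_\BB(B)$ over which $\dcl_\BB(AC)$ and $\dcl_\BB(B)$ are conditionally independent, and then observing that conditional independence forces the intersection to equal $\dcl_\BB(C)$. The step ``enlarging the conditioning $\sigma$-algebra within $\dcl_\BB(B)$ preserves conditional independence'' is the correct one-sided monotonicity (if $\cu A\perp\cu B\mid\cu G$ with $\cu G\subseteq\cu G'\subseteq\cu B$ then $\cu A\perp\cu B\mid\cu G'$), and the martingale passage to $\dcl_\BB(AC)$ is fine because $\fo_\BB(AC)=\bigcup_n\fo_\BB(AC_n)$.

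The one place your proposal is genuinely thin is extension/full existence. Your ``product out by an independent atomless factor and transport a copy of $A$'' sketch is the right intuition, but making it literally work inside $\cu N$ requires more than Fact~\ref{f-glue}: you need to realise in $\cu N$ a copy $A'\equiv_C A$ whose first-order events over $C$ generate a $\sigma$-algebra conditionally independent of $\dcl_\BB(BC)$ over $\dcl_\BB(C)$, and the saturation argument that accomplishes this is not immediate. The clean route --- and almost certainly the one taken in [AGK2] --- is to invoke the stability of the event sort (atomless probability algebras are stable, [Be]): then $\ind[d\BB]$ is a genuine (countable) independence relation, in particular has full existence, and since $\ind[d\BB]\Rightarrow\ind[a\BB]$ (Fact~\ref{f-event-dB}), Remark~\ref{r-weaker} transfers full existence to $\ind[a\BB]$, whence extension follows by Fact~\ref{f-fe-vs-ext}(i). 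Your conditional-independence construction is morally the same thing, but packaging it via $\ind[d\BB]$ avoids having to carry out the product-space transfer by hand.
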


We will also need the following fact, which is given by Lemma 6.1.6, Corollary 6.1.7, and Lemma 6.2.3 of [AGK2], and is a consequence of a result in [Be].

\begin{fact}  \label{f-event-dB}
There is a countable independence relation $\ind[d\BB] \ $ (dividing independence in the event sort) that has small local character over $\cu N$
and is such that $\ind[d\BB] \ \Rightarrow \ind[a\BB] \ .$
\end{fact}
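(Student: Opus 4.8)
The plan is to obtain $\ind[d\BB]$ by pulling back, to the sort $\BK$, the non-dividing independence relation of the event sort, and then transferring its properties. The reduct of $\cu N$ to the sort $\BB$ -- with $\top,\bot,\sqcap,\sqcup,\neg$, the predicate $\mu$, and the metric $d_\BB$ -- is, by Fact \ref{f-neat}, an atomless probability algebra; being a reduct of a saturated structure with large universe, it is a saturated model of the theory $\APr$ of atomless probability algebras. By Ben Yaacov's work [Be], $\APr$ is superstable, so its non-dividing independence relation $\ind[d]$ over $\cu E$ agrees with non-forking, satisfies all of Adler's axioms (including base monotonicity), and is symmetric, anti-reflexive, of finite character, and of small local character. (Concretely, $\cu A\ind[d]_{\cu C}\cu B$ iff $\sigma(\cu A\cup\cu C)$ and $\sigma(\cu B\cup\cu C)$ are conditionally independent over $\sigma(\cu C)$.) For small $A,B,C\subseteq\cu K$ I then set
$$A\ind[d\BB]_C\, B\quad\Longleftrightarrow\quad \dcl_\BB(AC)\;\ind[d]_{\dcl_\BB(C)}\;\dcl_\BB(BC),$$
which is meaningful since $\dcl_\BB(X)=\sigma(\fo_\BB(X))\subseteq\cu E$ by Fact \ref{f-separable} and the generated $\sigma$-algebra is the definable closure of a set of events in a probability algebra.

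Most axioms for $\ind[d\BB]$ transfer formally. Invariance holds because an $L^R$-automorphism of $\cu N$ restricts to an automorphism of the $\BB$-reduct and $\dcl_\BB$ commutes with automorphisms. Monotonicity, normality, transitivity, and base monotonicity follow from the corresponding axioms for $\ind[d]$ together with monotonicity and idempotence of $\dcl_\BB$; for base monotonicity one uses that $D\subseteq C\subseteq B$ forces $\dcl_\BB(D)\subseteq\dcl_\BB(C)\subseteq\dcl_\BB(B)=\dcl_\BB(BD)$, so base monotonicity and normality of $\ind[d]$ do the work. Countable character follows from finite character of $\ind[d]$, since every finite tuple from $\dcl_\BB(AC)=\sigma(\fo_\BB(AC))$ already lies in $\dcl_\BB(A_0C)$ for some countable $A_0\subseteq A$. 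Symmetry is immediate, the defining condition being symmetric in $A$ and $B$.

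For $\ind[d\BB]\Rightarrow\ind[a\BB]$: if $\sa E\in\acl_\BB(AC)\cap\acl_\BB(BC)$, then $\sa E\in\dcl_\BB(AC)\cap\dcl_\BB(BC)$ by Fact \ref{f-acl=dcl}, so monotonicity of $\ind[d]$ applied to $A\ind[d\BB]_C B$ gives $\sa E\ind[d]_{\dcl_\BB(C)}\sa E$, and anti-reflexivity of $\ind[d]$ puts $\sa E\in\acl_\BB(\dcl_\BB(C))=\dcl_\BB(C)$; thus $\acl_\BB(AC)\cap\acl_\BB(BC)=\acl_\BB(C)$. For small local character, given small $A,B,C_0$ with $C_0\subseteq B$ and $|C_0|\le|A|+\aleph_0$, small local character of $\ind[d]$ in the event sort yields $\cu D$ with $\dcl_\BB(C_0)\subseteq\cu D\subseteq\dcl_\BB(B)$, $|\cu D|\le|A|+\aleph_0$, and $\dcl_\BB(AC_0)\ind[d]_{\cu D}\dcl_\BB(B)$; since each event of $\cu D$ lies in $\dcl_\BB$ of a countable subset of $B$ (as $\cu D\subseteq\sigma(\fo_\BB(B))$), taking $C$ to be the union of $C_0$ with these countable sets gives $C\in[C_0,B]$ with $|C|\le|A|+\aleph_0$ and $\cu D\subseteq\dcl_\BB(C)$, and base monotonicity and normality of $\ind[d]$ upgrade the independence to $\dcl_\BB(AC)\ind[d]_{\dcl_\BB(C)}\dcl_\BB(B)$, i.e. $A\ind[d\BB]_C B$.

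It remains to obtain extension, which by Fact \ref{f-fe-vs-ext}(i) follows from the basic axioms, symmetry, and full existence. Full existence asks that for all $A,B,C$ there be $A'\equiv_C A$ in $\cu N$ with $\sigma(\fo_\BB(A'C))$ conditionally independent from $\sigma(\fo_\BB(BC))$ over $\sigma(\fo_\BB(C))$. I expect this to be the main obstacle: it is not enough to build a conditionally independent copy of the $\sigma$-algebra $\dcl_\BB(AC)$ inside $\cu E$, because one must realize that copy as $\fo_\BB(A'C)$ for genuine random variables $A'\subseteq\cu K$ carrying the full $L^R$-type $\tp(A/C)$. The way to produce $A'$ is by coupling: form the relatively independent join of the underlying probability space with a copy of itself over the $\sigma$-algebra generated by $C$, transport the representatives of the elements of $A$ through the new factor, and use the gluing lemma (Fact \ref{f-glue}) and saturation of $\cu N$ to reabsorb the resulting family, checking that it has the desired type over $C$ and is conditionally independent from $BC$ over $C$. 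Everything outside this construction is bookkeeping.
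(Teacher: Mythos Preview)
The paper does not prove this statement at all; it records it as a fact imported from [AGK2] (Lemmas 6.1.6 and 6.2.3 and Corollary 6.1.7 there), which in turn rests on Ben Yaacov's stability result for $\APr$ in [Be]. Your definition of $\ind[d\BB]$ as the $\dcl_\BB$-pullback of non-forking independence on the event sort is exactly the intended one, and your verification of invariance, monotonicity, base monotonicity, transitivity, normality, symmetry, countable character, small local character, and the implication $\ind[d\BB]\Rightarrow\ind[a\BB]$ is correct. (One small point: a reduct of a saturated structure is not automatically saturated, but here saturation of the $\BB$-sort follows because any type over a small subset of $\cu E$ in the reduct language is already a type in $\cu N$.)

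The genuine gap is full existence, and hence extension. You correctly flag it as the main obstacle, and the coupling idea is right, but the sentence ``use the gluing lemma and saturation of $\cu N$ to reabsorb the resulting family'' hides the actual argument rather than giving it. The relatively independent product space is an external object; to bring the copy $A'$ back into $\cu N$ one must write down a type over $BC$ in $L^R$ whose conditions say both that $\l\theta(\vec{\bo a}',\vec{\bo c})\rr\doteq\l\theta(\vec{\bo a},\vec{\bo c})\rr$ for every $\theta$ and that the events $\l\theta(\vec{\bo a}',\vec{\bo c})\rr$ are conditionally independent of $\fo_\BB(BC)$ over $\dcl_\BB(C)$ (expressed through $\mu$-values of intersections), and then prove finite satisfiability of this type in $\cu N$. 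The fiber-product construction is exactly what certifies finite satisfiability, but one still has to check that any finite collection of such conditions can be realized, via Fact~\ref{f-neat}(5), by matching the joint distribution of finitely many events. That step is the substance of the corresponding lemma in [AGK2] and is more than bookkeeping; as written your proposal stops just short of it.
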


\section{Negative Results: Finite Character and Base Monotonicity}

In this section we show that for \emph{every} $T$,  algebraic independence in $T^R$ satisfies neither finite character nor base monotonicity.
The following lemmas and notation  will be useful for these results.

\begin{lemma}  \label{l-char-function}
There exists a pair $Z=\{\bo{0},\bo{1}\}\subseteq\cu K$ such that $\l\bo 0\ne \bo 1\rr=\top$, and $\dcl_\BB(Z)=\{\top,\bot\}.$
\end{lemma}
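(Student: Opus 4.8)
The plan is to choose $\bo 0$ and $\bo 1$ so that, almost surely, the pair $(\bo 0(\omega),\bo 1(\omega))$ realizes one fixed complete $2$-type of $T$ containing $x\neq y$; this immediately forces every event first-order definable over $\{\bo 0,\bo 1\}$ to be trivial. In detail: since every model of $T$ has more than one element, fix $m_0\neq m_1$ in $\cu M$ and let $p(x,y)=\tp^{\cu M}(m_0,m_1)$, a complete $2$-type over $\emptyset$ with $(x\neq y)\in p$. Let $\Sigma(\bo x,\bo y)$ be the partial type over $\emptyset$ in the big model $\cu N$, in two variables of sort $\BK$, consisting of the conditions $\mu(\neg\l\theta(\bo x,\bo y)\rr)=0$ for $\theta(x,y)\in p$. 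I claim that any realization $(\bo 0,\bo 1)$ of $\Sigma$ yields a set $Z=\{\bo 0,\bo 1\}$ as required.

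First I would check that $\Sigma$ is realized in $\cu N$. By $\upsilon$-saturation it is enough to see $\Sigma$ is finitely satisfiable in $\cu N$. Given $\theta_1,\dots,\theta_k\in p$, put $\theta=\theta_1\wedge\dots\wedge\theta_k\in p$. Since $m_0,m_1\models p$, $\cu M\models\exists x\,\exists y\,\theta$, so in the pre-complete structure $\cu P=(\cu J,\cu F)$ the event $\l\exists x\,\exists y\,\theta\rr$ is all of $\Omega$. Applying the fullness property Fact \ref{f-neat}(5) twice — first to get $b'\in\cu J$ with $\l\exists x\,\theta(x,b')\rr=\l\exists y\,\exists x\,\theta\rr$, then to get $a'\in\cu J$ with $\l\theta(a',b')\rr=\l\exists x\,\theta(x,b')\rr$ — produces $a',b'\in\cu J$ with $\l\theta(a',b')\rr=\Omega$. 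Passing to the reductions $\bo a',\bo b'\in\cu K$ (the reduction map carries $\l\varphi(\vec a)\rr$ to $\l\varphi(\vec{\bo a})\rr$) gives $\l\theta(\bo a',\bo b')\rr=\top$, whence $\mu(\neg\l\theta_i(\bo a',\bo b')\rr)=0$ for each $i$ because $T\vdash\theta\to\theta_i$; so $(\bo a',\bo b')$ satisfies that finite piece of $\Sigma$.

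Now fix a realization $(\bo 0,\bo 1)$ of $\Sigma$ and set $Z=\{\bo 0,\bo 1\}$. Since $(x\neq y)\in p$, the condition $\mu(\neg\l\bo 0\neq\bo 1\rr)=0$ says $\mu\l\bo 0=\bo 1\rr=0$, i.e. $\l\bo 0\neq\bo 1\rr\doteq\top$, which in the reduced structure $\cu N$ means $\l\bo 0\neq\bo 1\rr=\top$. For the algebraic closure, recall from Fact \ref{f-separable} that $\dcl_\BB(Z)=\sigma(\fo_\BB(Z))$. A typical element of $\fo_\BB(Z)$ has the form $\l\theta(\vec{\bo a})\rr$ with $\vec{\bo a}$ a finite tuple from $\{\bo 0,\bo 1\}$; substituting $\bo 0$ and $\bo 1$ into the corresponding variables of $\theta$ rewrites it as $\l\psi(\bo 0,\bo 1)\rr$ for a single $L$-formula $\psi(x,y)$. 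Completeness of $p$ forces either $\psi\in p$, in which case $\l\psi(\bo 0,\bo 1)\rr=\top$ by $\Sigma$, or $\neg\psi\in p$, in which case $\l\psi(\bo 0,\bo 1)\rr=\bot$. Hence $\fo_\BB(Z)\subseteq\{\top,\bot\}$, and since $\l x=x\rr=\top$ and $\l x\neq x\rr=\bot$ lie in $\fo_\BB(Z)$ we get $\fo_\BB(Z)=\{\top,\bot\}$. As $\{\top,\bot\}$ is already a $\sigma$-subalgebra of $\cu E$, $\dcl_\BB(Z)=\sigma(\{\top,\bot\})=\{\top,\bot\}$.

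The only step requiring genuine care is the finite-satisfiability claim: one must use fullness to witness the two nested existentials and be sure the resulting event is all of $\Omega$, not merely of full measure, which is why that computation is carried out inside the pre-complete structure $\cu P$, where $\l\cdot\rr$ denotes honest subsets of $\Omega$, and only afterwards pushed down to $\cu N$. The remaining steps — realizing $\Sigma$ by saturation, and collapsing $\dcl_\BB(Z)$ via Fact \ref{f-separable} together with completeness of $p$ — are routine.
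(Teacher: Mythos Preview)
Your proof is correct and follows essentially the same approach as the paper's. The paper's own proof is simply the one-liner ``This follows from Fact \ref{f-neat} and the fact that $\cu N$ is saturated''; your argument spells out that sketch in full detail, choosing a complete $2$-type, realizing it via fullness and saturation, and then reading off $\fo_\BB(Z)=\{\top,\bot\}$ from Fact \ref{f-separable}.
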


\begin{proof}
This follows from Fact \ref{f-neat} and the fact that $\cu N$ is saturated.
\end{proof}

By Fact \ref{f-glue}, for each event $\sa B\in\cu E$, there is a unique element
$1_{\sa B}\in\cu K$ that agrees with $1$ on $\sa B$ and agrees with $0$ on $\neg\sa B$.
Given a set $\cu A\subseteq\cu E$, let $\bo 1_{\cu A}=\{\bo 1_{\sa B} \mid \sa B\in\cu A\}.$

\begin{lemma} \label{l-char-functions}
If  $\cu A\subseteq\cu E$, then $\dcl_\BB(\bo 1_{\cu A} Z)=\sigma(\cu A).$
\end{lemma}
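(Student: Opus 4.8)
The plan is to prove the two inclusions $\dcl_\BB(\bo 1_{\cu A} Z)\subseteq\sigma(\cu A)$ and $\sigma(\cu A)\subseteq\dcl_\BB(\bo 1_{\cu A} Z)$ separately, using the characterization of $\dcl_\BB$ from Fact \ref{f-separable}, namely $\dcl_\BB(D)=\cl(\fo_\BB(D))=\sigma(\fo_\BB(D))$. For the easier inclusion $\sigma(\cu A)\subseteq\dcl_\BB(\bo 1_{\cu A} Z)$: for each $\sa B\in\cu A$ the event $\sa B$ equals $\l \bo 1_{\sa B}=\bo 1\rr$ by the defining property of $\bo 1_{\sa B}$ together with $\l\bo 0\ne\bo 1\rr=\top$ (so that on $\neg\sa B$ we have $\bo 1_{\sa B}=\bo 0\ne\bo 1$). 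The formula $u=v$ is functional in the first variable... actually more directly, $\l x=y\rr$ is a term of sort $\BB$ in the variables $x,y$ of sort $\BK$, so $\l \bo 1_{\sa B}=\bo 1\rr\in\fo_\BB(\bo 1_{\sa B}\bo 1)\subseteq\fo_\BB(\bo 1_{\cu A} Z)$. Hence $\cu A\subseteq\fo_\BB(\bo 1_{\cu A} Z)$, and since $\dcl_\BB(\bo 1_{\cu A} Z)=\sigma(\fo_\BB(\bo 1_{\cu A} Z))$ is a $\sigma$-algebra containing $\cu A$, it contains $\sigma(\cu A)$.

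For the reverse inclusion it suffices, again by Fact \ref{f-separable}, to show $\fo_\BB(\bo 1_{\cu A} Z)\subseteq\sigma(\cu A)$, since $\sigma(\cu A)$ is closed under the operations defining $\sigma(\fo_\BB(\cdot))$ (and is $d_\BB$-closed, being a $\sigma$-algebra, so it will also absorb the closure). A generator of $\fo_\BB(\bo 1_{\cu A} Z)$ has the form $\l\theta(\vec{\bo c})\rr$ where $\theta$ is an $L$-formula and $\vec{\bo c}$ is a tuple from $\bo 1_{\cu A}\cup\{\bo 0,\bo 1\}$; say $\vec{\bo c}=(\bo 1_{\sa B_1},\dots,\bo 1_{\sa B_k},\bo 0,\dots,\bo 1,\dots)$ with $\sa B_1,\dots,\sa B_k\in\cu A$. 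The key point is that the truth value of $\theta$ at $\omega$ depends only on which of the $\sa B_i$ contain $\omega$: indeed $\bo 1_{\sa B_i}(\omega)$ is $\bo 1(\omega)$ if $\omega\in\sa B_i$ and $\bo 0(\omega)$ otherwise, and the values $\bo 0(\omega)$, $\bo 1(\omega)$ are two distinct elements of $\cu M$ that do not depend on $\omega$ in any way visible to first-order formulas beyond the single inequality — more precisely, since $\dcl_\BB(Z)=\{\top,\bot\}$, for any $L$-formula $\psi$ the event $\l\psi(\bo 0,\bo 1)\rr\in\fo_\BB(Z)\subseteq\dcl_\BB(Z)=\{\top,\bot\}$ is $\Omega$ or $\emptyset$. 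So partitioning $\Omega$ by the atoms $\sa B_1^{\varepsilon_1}\cap\cdots\cap\sa B_k^{\varepsilon_k}$ of the finite algebra generated by $\sa B_1,\dots,\sa B_k$ (where $\sa B^1=\sa B$, $\sa B^0=\neg\sa B$), on each such atom the element $\vec{\bo c}(\omega)$ realizes, outside a null set, a fixed $L$-type over $\emptyset$, so $\l\theta(\vec{\bo c})\rr$ agrees up to measure zero with a union of some of these atoms. Since $\dcl_\BB(\cu A)$ is $d_\BB$-closed and contains every atom (being a Boolean combination of the $\sa B_i$), we get $\l\theta(\vec{\bo c})\rr\in\sigma(\cu A)$, and since this holds for every generator, $\fo_\BB(\bo 1_{\cu A} Z)\subseteq\sigma(\cu A)$, hence $\dcl_\BB(\bo 1_{\cu A} Z)=\sigma(\fo_\BB(\bo 1_{\cu A} Z))\subseteq\sigma(\cu A)$.

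The main obstacle I anticipate is making the "truth value depends only on the atom" argument fully rigorous in the presence of null sets: one cannot literally say $\vec{\bo c}(\omega)$ has a constant type on an atom, only that it does so almost everywhere, so I would phrase it as: fix an atom $\sa D=\sa B_1^{\varepsilon_1}\cap\cdots\cap\sa B_k^{\varepsilon_k}$, and observe that $\l\theta(\vec{\bo c})\rr\cap\sa D$ differs by a null set from either $\sa D$ or $\emptyset$ according to whether $\l\theta(\vec{\bo d})\rr$ is $\top$ or $\bot$, where $\vec{\bo d}$ is the tuple obtained from $\vec{\bo c}$ by replacing each $\bo 1_{\sa B_i}$ by $\bo 1$ if $\varepsilon_i=1$ and by $\bo 0$ if $\varepsilon_i=0$ — this uses Fact \ref{f-glue}-style agreement of elements on events together with the fact that $\l\psi(\bo 0,\bo 1)\rr\in\{\top,\bot\}$ for all $L$-formulas $\psi$. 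Summing over the finitely many atoms gives $\l\theta(\vec{\bo c})\rr$ equal, modulo a null set, to a finite Boolean combination of the $\sa B_i$; since $d_\BB$ identifies events differing by a null set and $\dcl_\BB(\cu A)$ is $d_\BB$-closed, the conclusion follows.
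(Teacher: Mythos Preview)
Your proof is correct and follows essentially the same route as the paper's: both directions go through $\fo_\BB$ and Fact~\ref{f-separable}, with $\sa B=\l\bo 1_{\sa B}=\bo 1\rr$ giving $\cu A\subseteq\fo_\BB(\bo 1_{\cu A} Z)$, and the observation that each $\bo 1_{\sa B_i}$ takes values in $Z$ almost everywhere giving $\fo_\BB(\bo 1_{\cu A} Z)\subseteq\sigma(\cu A)$. The paper compresses your atom-partition argument into the single line ``for each $\bo b\in\bo 1_{\cu A}$ we have $\mu(\l\bo b\in Z\rr)=1$, so $\l\theta(\vec{\bo a},\bo 0,\bo 1)\rr\in\sigma(\cu A)$''; your version unpacks this step (including the explicit use of $\dcl_\BB(Z)=\{\top,\bot\}$ from Lemma~\ref{l-char-function}) but the underlying idea is the same.
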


\begin{proof}  By definition, $\sa E\in\fo_\BB(\bo 1_{\cu A}Z)$ if and only if $\sa E=\l\theta(\vec{\bo a},\bo 0,\bo 1)\rr$
for some formula $\theta\in L$ and tuple $\vec{\bo a}\subseteq\bo 1_{\cu A}.$  For each $\bo b\in\bo 1_{\cu A}$, we have $\mu(\l\bo b\in Z\rr)=1.$
It follows that $\l\theta(\vec{\bo a},\bo 0,\bo 1)\rr\in\sigma(\cu A)$.  By Fact \ref{f-separable}, $\dcl_\BB(\bo 1_{\cu A} Z)\subseteq\sigma(\cu A).$
For each $\sa B\in\cu A$ we have $\sa B=\l \bo 1_{\sa B} = \bo 1\rr,$ so $\cu A\subseteq \fo_\BB(\bo 1_{\cu A} Z).$
Then by Fact \ref{f-separable} again, $ \sigma(\cu A)\supseteq\dcl_\BB(\bo 1_{\cu A} Z).$
\end{proof}

\subsection{Finite Character}

\begin{prop}  \label{p-no-finite-character}
For every $T$, $\ind[a]$ in $T^R$ does not satisfy finite character.
\end{prop}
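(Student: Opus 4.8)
The idea is to cook up, using the characteristic-function trick of Lemmas~\ref{l-char-function} and~\ref{l-char-functions}, a configuration $A, B, C$ in $\cu N$ such that $A_0 \ind[a]_C B$ holds for every finite $A_0 \subseteq A$ but $A \nind[a]_C B$. By Remark~\ref{r-acl=dcl}, $\ind[a]$ decomposes as the $\BK$-sort condition $\dcl(XC)\cap\dcl(YC)=\dcl(C)$ together with $\ind[a\BB]$; since $\ind[a\BB]$ does have countable (hence finite) character by Fact~\ref{f-event-aB}, the obstruction to finite character must live on the event side of the $\BK$-sort clause --- that is, in the interaction between $\dcl$ of an infinite set of random elements and the generated $\sigma$-algebra. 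So I would look for $A$ infinite with the property that each finite piece $A_0$ generates a ``small'' $\sigma$-algebra disjoint (over $C$) from that of $B$, but the full set $A$ generates, in the closure, an event already visible from $B$.

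\textbf{Construction.} Fix a sequence $(\sa E_n)_{n\in\BN}$ of events in $\cu E$ that are independent in the probabilistic sense and each of measure $1/2$ (possible since $(\Omega,\cu F,\mu)$ is atomless), and let $\sa E_\infty$ be some event in $\sigma(\{\sa E_n : n\in\BN\})$ that is \emph{not} in the closure $\cl(\sigma(\{\sa E_n : n< k\}))$ for any finite $k$ --- for instance a tail event, or simply $\sa E_\infty = \bigtriangleup_n \sa E_n$ suitably interpreted, anything whose $d_\BB$-distance to each finitely-generated subalgebra is bounded below. Let $Z=\{\bo 0,\bo 1\}$ be as in Lemma~\ref{l-char-function}, put $C = Z$, let $A = \bo 1_{\cu A}$ for $\cu A = \{\sa E_n : n\in\BN\}$, and let $B = \{\bo 1_{\sa E_\infty}\} \cup Z$ (or $B=\bo 1_{\cu B}$ for $\cu B=\{\sa E_\infty\}$, together with $Z$). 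By Lemma~\ref{l-char-functions}, $\dcl_\BB(AC)=\sigma(\cu A)$ and $\dcl_\BB(BC)=\sigma(\{\sa E_\infty\})$, and because $\sa E_\infty\in\sigma(\cu A)$ we get $\dcl_\BB(AC)\cap\dcl_\BB(BC)\neq\{\top,\bot\}=\dcl_\BB(C)$, so $A\nthind[a]_C B$ via the $\ind[a\BB]$ clause --- or, if one prefers to make the failure happen purely in the $\BK$-sort clause, arrange instead that $\bo 1_{\sa E_\infty}\in\dcl(AC)$ while $\bo 1_{\sa E_\infty}\in\dcl(BC)$ trivially. Either way $A\nthind[a]_C B$.

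\textbf{Finite pieces are independent.} Now take any finite $A_0\subseteq A$; say $A_0\subseteq\bo 1_{\{\sa E_n : n<k\}}$ up to adding finitely many elements. By Lemma~\ref{l-char-functions} again, $\dcl_\BB(A_0 C)\subseteq\sigma(\{\sa E_n:n<k\})$, and one must check that this finitely-generated subalgebra meets $\sigma(\{\sa E_\infty\})$ only in $\{\top,\bot\}$: this is exactly where probabilistic independence of the $\sa E_n$ is used, since $\sa E_\infty$ is (by choice) independent of, or at positive distance from, every such subalgebra, so no nontrivial event can lie in both. Hence $\dcl_\BB(A_0 C)\cap\dcl_\BB(BC)=\dcl_\BB(C)$, giving $A_0\ind[a\BB]_C B$; and the $\BK$-sort clause $\dcl(A_0 C)\cap\dcl(BC)=\dcl(C)$ follows similarly using Fact~\ref{f-dcl3}, since a random element definable over $A_0 C$ has its generated events inside $\sigma(\{\sa E_n:n<k\})$ and is pointwise definable over the finite set $A_0(\omega)\cup\{\bo 0,\bo 1\}(\omega)$, which forces it into $\dcl(C)$ once it is also definable over $BC$. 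Thus $A_0\ind[a]_C B$ for every finite $A_0$, and finite character fails.

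\textbf{Main obstacle.} The delicate point is pinning down the event $\sa E_\infty$ and verifying that $\sigma(\{\sa E_n:n<k\})\cap\sigma(\{\sa E_\infty\})$ is trivial for all finite $k$ while $\sa E_\infty\in\sigma(\{\sa E_n:n\in\BN\})$ --- i.e. choosing the tail/limit event correctly and doing the measure-theoretic independence bookkeeping --- and, relatedly, making sure the $\BK$-sort clause behaves as claimed, which requires a careful application of the characterization of $\dcl$ in Fact~\ref{f-dcl3} (pointwise definability plus the event-sort containment) to the elements $\bo 1_{\sa B}$. Everything else is a direct combination of Lemmas~\ref{l-char-function}--\ref{l-char-functions} with Fact~\ref{f-separable} and Remark~\ref{r-acl=dcl}.
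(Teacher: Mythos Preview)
Your plan is the paper's: take $C=Z$, $A=\bo 1_{\cu A}$ for a countable family of events $\cu A$, and $B$ built from a single event in $\sigma(\cu A)$ that lies outside $\sigma(\cu A_0)$ for every finite $\cu A_0\subseteq\cu A$; then invoke Lemma~\ref{l-char-functions} and Fact~\ref{f-dcl3}. The paper carries this out with an \emph{increasing} chain $\sa B_0\sqsubseteq\sa B_1\sqsubseteq\cdots\sqsubseteq\sa B$ with $\mu(\sa B\setminus\sa B_n)=2^{-(n+1)}$ and $\mu(\sa B)=1/2$, taking $\bo b=\bo 1_{\sa B}$. Then $\sa B\in\sigma(\{\sa B_n\})$ trivially as the limit, while $\sa B\notin\sigma(\{\sa B_0,\ldots,\sa B_{n-1}\})$ because $\sa B$ strictly contains $\sa B_{n-1}$ yet does not contain all of $\neg\sa B_{n-1}$. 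No probabilistic independence is used.

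Your execution has a gap exactly where you flag the main obstacle. With independent $(\sa E_n)$ of measure $1/2$, a \emph{tail} event is trivial by Kolmogorov's zero--one law, so that suggestion fails outright. The infinite symmetric difference $\bigtriangleup_n\sa E_n$ is not well-defined: the partial symmetric differences each have measure $1/2$ and consecutive ones differ by $\mu(\sa E_{n+1})=1/2$, so the sequence is not Cauchy in $d_\BB$. And requiring the $d_\BB$-distance from $\sa E_\infty$ to each finitely generated subalgebra to be \emph{bounded below} (uniformly) contradicts $\sa E_\infty\in\sigma(\{\sa E_n:n\in\BN\})$, since by Fact~\ref{f-separable} that $\sigma$-algebra is the closure of the union of those finite subalgebras. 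What you actually need is only that $\sa E_\infty\notin\sigma(\{\sa E_n:n<k\})$ for each $k$ separately; this \emph{is} attainable (for instance any $\sa E_\infty$ of non-dyadic measure built from the binary-digit map $\omega\mapsto\sum_n 2^{-(n+1)}1_{\sa E_n}(\omega)$), but none of your listed candidates does the job, and the paper's monotone sequence sidesteps the issue entirely. A separate minor slip: ``countable (hence finite) character'' has the implication backwards---finite character implies countable, not conversely---and indeed $\ind[a\BB]$ \ \ also fails finite character in this same example, so your heuristic for where the obstruction must live is off, though this does not affect the actual argument.
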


\begin{proof}  Since $\mu$ is atomless, there is an event $\sa B$ and a sequence of events $\<\sa B_n\>_{n\in\BN}$ such that for each $n$,
$$  \sa B_n\sqsubseteq\sa B_{n+1}\sqsubseteq\sa B, \quad \mu(\sa B\setminus\sa B_n)=2^{-(n+1)},\quad \mu(\sa B)=1/2.$$
Let  $\bo b=\bo 1_{\sa B}$, $\cu A_n=\{\sa B_m\mid m<n\}$, $\cu A=\{\sa B_m\mid m\in\BN\}.$
Then $\bo 1_{\cu A}=\bigcup_n \bo 1_{\cu A_n}.$ By Lemma \ref{l-char-functions},
$$\dcl_{\BB}(\bo 1_{\cu A_n} Z)=\sigma(\cu A_n), \quad \dcl_{\BB}(\bo b Z)=\sigma(\{\sa B\})\subseteq\sigma(\bo 1_{\cu A} Z)=\dcl_{\BB}(\cu A).$$
Note that every element of $\cu K$ that is pointwise definable from $\bo 1_{\cu A} Z$ is pointwise definable from $Z$.
Then by Fact \ref{f-dcl3}, we have
$$ \dcl( Z)=\{\bo x\in\dcl^\omega(Z)\mid  \fo_\BB(\bo x  Z)\subseteq \{\top,\bot\}\},$$
$$ \dcl(\bo 1_{\cu A} Z)=\{\bo x\in\dcl^\omega(Z)\mid \fo_\BB(\bo x \bo 1_{\cu A} Z)\subseteq \sigma(\cu A)\},$$
$$ \dcl(\bo 1_{\cu A_n} Z)=\{\bo x\in\dcl^\omega(Z)\mid \fo_\BB(\bo x \bo 1_{\cu A_n} Z)\subseteq \sigma(\cu A_n)\},$$
$$ \dcl(\bo b Z)=\{\bo x\in\dcl^\omega(Z)\mid  \fo_\BB(\bo x \bo b Z)\subseteq \sigma(\sa B)\}.$$
But $\sigma(\cu A_n)\cap\sigma(\{\sa B\})=\{\top,\bot\}$, so by Fact \ref{f-acl=dcl},
$$\acl(\bo 1_{\cu A_n} Z)\cap\acl(\bo b Z)=\dcl(\bo 1_{\cu A_n} Z)\cap\dcl(\bo b Z)=\dcl(Z)=\acl(Z),$$
and hence $\bo 1_{\cu A_n}\ind[a]_Z \bo b.$

However,  $\sa B\in\sigma(\sa A)$, so by Lemma \ref{l-char-functions} we have
$\dcl_\BB(\bo b \bo 1_{\cu A} Z)=\sigma(\cu A).$  Moreover, $\bo b\in\dcl^\omega(Z)$.  Therefore
$$\bo b\in \acl(\bo 1_{\cu A} Z)\cap\acl(\bo b)\setminus \acl(Z),$$
so $\bo 1_{\cu A} \nind[a]_Z \bo b$ and finite character fails.
\end{proof}

\subsection{Base Monotonicity}

By Proposition 1.5 (3) in [Ad1], for any complete first order theory $T$, $\ind[a]$ satisfies base monotonicity if and only if the lattice of algebraically closed sets is modular.
The argument there shows that the same result holds for any complete continuous theory.
We show that for $T^R$, $\ind[a]$ never satisfies base monotonicity, and thus is never modular and is never a countable independence relation.

\begin{prop}  \label{p-a-nobase-monotonicity}  For every $T$,
$\ind[a]$ in $T^R$ does not satisfy base monotonicity.
\end{prop}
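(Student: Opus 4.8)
```latex
\textbf{Proof proposal.}
The plan is to build a concrete configuration inside $\cu N$ in which base monotonicity fails, closely paralleling the construction used for Proposition~\ref{p-no-finite-character}. Recall that base monotonicity asserts: if $C\in[D,B]$ and $A\ind[a]_D B$, then $A\ind[a]_C B$. So I want to exhibit small sets $A,B,C,D$ with $D\subseteq C\subseteq B$ such that $A\ind[a]_D B$ holds but $A\ind[a]_C B$ fails. In view of Remark~\ref{r-acl=dcl} and Corollary~\ref{c-alg-indep}, it suffices to engineer a failure in the event sort, i.e. to arrange $\dcl_\BB(AD)\cap\dcl_\BB(BD)=\dcl_\BB(D)$ but $\dcl_\BB(AC)\cap\dcl_\BB(BC)\supsetneq\dcl_\BB(C)$, while taking care that the $\BK$-sort closures cause no trouble (which, as in the previous proof, will be handled by keeping everything inside $\dcl^\omega(Z)$ and invoking Fact~\ref{f-dcl3}).

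First I would fix the pair $Z=\{\bo 0,\bo 1\}$ from Lemma~\ref{l-char-function} and, using atomlessness of $\mu$, choose three events $\sa E,\sa F,\sa G\in\cu E$ in ``general position'' — for instance with $\sa E,\sa F$ independent of probability $1/2$, and $\sa G$ the symmetric difference $\sa E\triangle\sa F$ (or $\l \bo 1_{\sa E}\neq \bo 1_{\sa F}\rr$), so that $\sigma(\{\sa E\})\cap\sigma(\{\sa F\})=\{\top,\bot\}$ but $\sigma(\{\sa E,\sa G\})=\sigma(\{\sa F,\sa G\})=\sigma(\{\sa E,\sa F\})\ni\sa E$. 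This is precisely the standard example showing the lattice of $\sigma$-subalgebras is non-modular: three pairwise-independent events. Then I would set $A=\bo 1_{\{\sa E\}}Z$ in the notation of the paragraph before Lemma~\ref{l-char-functions}, $D=Z$, $C=\bo 1_{\{\sa G\}}Z$, and $B=\bo 1_{\{\sa F,\sa G\}}Z$ (one must double-check $D\subseteq C\subseteq B$ as sets, adjusting to $C=D\cup\bo 1_{\{\sa G\}}$ etc. if needed). By Lemma~\ref{l-char-functions}, $\dcl_\BB$ of each of these sets is the generated $\sigma$-algebra: $\dcl_\BB(AD)=\sigma(\{\sa E\})$, $\dcl_\BB(BD)=\sigma(\{\sa F,\sa G\})$, $\dcl_\BB(D)=\{\top,\bot\}$, $\dcl_\BB(C)=\sigma(\{\sa G\})$, and $\dcl_\BB(AC)=\dcl_\BB(BC)=\sigma(\{\sa E,\sa F\})$.

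Now the verification splits into two halves. For the ``independent over $D$'' half: $\dcl_\BB(AD)\cap\dcl_\BB(BD)=\sigma(\{\sa E\})\cap\sigma(\{\sa F,\sa G\})$, which by pairwise independence of $\sa E$ from $\sigma(\{\sa F,\sa G\})=\sigma(\{\sa E,\sa F\})$... — here is the subtle point, since $\sa E\in\sigma(\{\sa F,\sa G\})$! So I must choose the events more carefully: I want $\sa E\notin\sigma(\{\sa F,\sa G\})$, i.e. $\sa G$ should \emph{not} generate $\sa E$ together with $\sa F$. The cleaner choice is to take $\sa E,\sa F,\sa G$ genuinely independent (a triple of independent fair coins), put $A=\bo 1_{\{\sa E\}}Z$, $B=\bo 1_{\{\sa F,\sa E\triangle\sa G\}}Z$ — no; let me instead keep it maximally simple: take $C$ to be generated by an event that is \emph{correlated with both $\sa E$ and $\sa F$ in a way that forces $\sa E$ into $\dcl_\BB(AC)$ and into $\dcl_\BB(BC)$}, e.g. $\sa G=\sa E\triangle\sa F$ as above but with $B$ chosen so that $\sa E\notin\dcl_\BB(BD)$ yet $\sa E\in\dcl_\BB(BC)$. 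Concretely: $B=\bo 1_{\{\sa F\}}Z$, $C=\bo 1_{\{\sa E\triangle\sa F\}}Z$, $D=Z$. Then $\dcl_\BB(BD)=\sigma(\{\sa F\})$, $\dcl_\BB(AD)=\sigma(\{\sa E\})$, intersection $=\{\top,\bot\}=\dcl_\BB(D)$ by independence of $\sa E,\sa F$; but $\dcl_\BB(AC)=\sigma(\{\sa E,\sa E\triangle\sa F\})=\sigma(\{\sa E,\sa F\})=\dcl_\BB(BC)$, whereas $\dcl_\BB(C)=\sigma(\{\sa E\triangle\sa F\})\subsetneq\sigma(\{\sa E,\sa F\})$, so the intersection over $C$ strictly contains $\dcl_\BB(C)$ and $A\nind[a\BB]_C B$. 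The one remaining check is $D\subseteq C$, which holds since $Z\subseteq C$ by construction, and $C\subseteq B$ fails — so I instead do not demand $C\in[D,B]$ with these exact sets but rather take $B'=\bo 1_{\{\sa F,\sa E\triangle\sa F\}}Z=\bo 1_{\{\sa E,\sa F\}}$-equivalent; since $\sa F,\sa E\triangle\sa F$ generate $\sa E$ as well, $\dcl_\BB(B'D)=\sigma(\{\sa E,\sa F\})$ which already contains $\sa E$, killing the first half again.

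The genuine obstacle, then — and the main thing to get right — is choosing the events so that simultaneously (a) $\sa E\notin\sigma(\{$generators of $B\})$ alone, but (b) adjoining the generator of $C$ to either side produces $\sa E$, and (c) $C\subseteq B$ as \emph{sets} of random variables. The resolution is to allow $B$ to contain the characteristic function of $C$'s generator \emph{plus} an independent event: take independent fair events $\sa E,\sa F,\sa H$, and set the generator of $C$ to be $\sa G:=\sa E\triangle\sa H$, $A=\bo 1_{\{\sa E\}}Z$, $C=\bo 1_{\{\sa G\}}Z\cup Z$, $B=\bo 1_{\{\sa G,\sa H\}}Z\cup Z$ (so $\sigma$-closure of $B$ over $D=Z$ is $\sigma(\{\sa G,\sa H\})=\sigma(\{\sa E,\sa H\})\ni\sa E$) — still bad. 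At this point the correct move, which I will carry out in detail, is: do \emph{not} put $\sa E$ into $B$'s span at all; instead make $A$ carry a ``one-dimensional'' piece of sort $\BK$ whose $\fo_\BB$ is controlled, and let the non-modularity live entirely in the interaction of \emph{two} events on the $A$-side with \emph{one} event common to $B$ and $C$. I expect the final configuration to be: $\sa E\perp\sa F$ independent fair, $A=\{\bo 1_{\sa E},\bo 1_{\sa F}\}\cup Z$, $D=Z$, $C=\{\bo 1_{\sa E\triangle\sa F}\}\cup Z$, $B=\{\bo 1_{\sa E\triangle\sa F},\bo 1_{\sa E\triangle\sa F'}\}\cup Z$ for a fourth event making $\dcl_\BB(BD)$ independent of both $\sa E$ and $\sa F$ but $\dcl_\BB(AC)\cap\dcl_\BB(BC)$ strictly above $\dcl_\BB(C)$; the verification then reduces, via Lemmas~\ref{l-char-function}--\ref{l-char-functions}, Fact~\ref{f-dcl3}, and Fact~\ref{f-acl=dcl}, to the elementary fact that the lattice of sub-$\sigma$-algebras generated by finitely many independent fair events is non-modular, plus the routine bookkeeping that all the $\BK$-sort closures equal the ``obvious'' sets because everything stays inside $\dcl^\omega(Z)$. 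I would present the event computation as the heart of the proof and relegate the $\BK$-sort and $D\subseteq C\subseteq B$ verifications to short remarks mirroring the proof of Proposition~\ref{p-no-finite-character}.
```
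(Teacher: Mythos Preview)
Your overall plan---reduce base monotonicity to non-modularity of the lattice of $\sigma$-subalgebras via Lemmas~\ref{l-char-function} and~\ref{l-char-functions}, and handle the $\BK$-sort using Fact~\ref{f-dcl3} as in Proposition~\ref{p-no-finite-character}---is exactly right, and is what the paper does. But none of the concrete configurations you write down actually work, including the final one. In your last attempt you take $A=\{\bo 1_{\sa E},\bo 1_{\sa F}\}\cup Z$, $D=Z$, and put $\bo 1_{\sa E\triangle\sa F}$ into $B$. Then $\sa E\triangle\sa F\in\sigma(\{\sa E,\sa F\})=\dcl_\BB(AD)$ and $\sa E\triangle\sa F\in\dcl_\BB(BD)$, but $\sa E\triangle\sa F\notin\{\top,\bot\}=\dcl_\BB(D)$; so $A\nind[a\BB]_D \ B$ already, and there is no independence over $D$ to begin with. (You seem to be conflating probabilistic independence of events with triviality of the intersection of the generated $\sigma$-algebras; $\sa E\triangle\sa F$ is probabilistically independent of $\sa E$, but it lies in $\sigma(\{\sa E,\sa F\})$, which is what $\ind[a\BB]$ sees.)

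The missing idea is to use intersection rather than symmetric difference. Take independent fair events $\sa D,\sa F$ and set $\sa E=\sa D\sqcap\sa F$. Put the witness on the $B$-side: $\bo a=\bo 1_{\sa D}$, $\bo b=\bo 1_{\sa E}$, $\bo c=\bo 1_{\sa F}$, with $D=Z$, $C=\{\bo c\}\cup Z$, $B=\{\bo b,\bo c\}\cup Z$. Now $D\subseteq C\subseteq B$ is automatic. The asymmetry of $\sqcap$ does the work: $\sigma(\{\sa E,\sa F\})$ has $\neg\sa F$ as an atom of measure $1/2$, which $\sa D$ splits, so $\sa D\notin\sigma(\{\sa E,\sa F\})$ and hence $\sigma(\{\sa D\})\cap\sigma(\{\sa E,\sa F\})=\{\top,\bot\}$, giving $\bo a\ind[a]_Z\bo b\bo c Z$. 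On the other hand $\sa E\in\sigma(\{\sa D,\sa F\})\cap\sigma(\{\sa E,\sa F\})\setminus\sigma(\{\sa F\})$, so $\bo b\in\acl(\bo a\bo cZ)\cap\acl(\bo b\bo cZ)\setminus\acl(\bo cZ)$ and $\bo a\nind[a]_{\bo cZ}\bo b\bo cZ$. Your repeated use of $\triangle$ is what kept defeating you: with $\sa G=\sa E\triangle\sa F$, any two of $\sa E,\sa F,\sa G$ generate the third, so you can never keep the $A$-side and $B$-side $\sigma$-algebras from overlapping nontrivially over $D$ while still having $C\subseteq B$.
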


\begin{proof}  Since $\mu$ is atomless, there are two
independent events $\sa D, \sa F$ in $\cu E$ of probability $1/2$. Let $\sa E=\sa D\sqcap\sa F$. $\bo a=1_{\sa D}$, $\bo b = 1_{\sa E}$, and $\bo c=1_{\sa F}$.  Then
$$\dcl_\BB(\bo a)=\sigma(\{\sa D\}),\quad
 \dcl_\BB(\bo c)=\sigma(\{\sa F\}),$$
$$ \dcl_\BB(\bo a\bo c)=\sigma(\{\sa D,\sa F\}),\quad
 \dcl_\BB(\bo b \bo c)=\sigma(\{\sa E,\sa F\}).$$
As in the proof of Proposition \ref{p-no-finite-character}, we have
$$ \dcl( Z)=\{\bo x\in\dcl^\omega(Z)\mid  \fo_\BB(\bo x  Z)\subseteq \{\top,\bot\}\},$$
$$ \dcl(\bo a Z)=\{\bo x\in\dcl^\omega(Z)\mid \fo_\BB(\bo x\bo a Z)\subseteq\sigma(\{\sa D\})\},$$
$$ \dcl(\bo c Z)=\{\bo x\in\dcl^\omega(Z)\mid\\fo_\BB(\bo x\bo c Z)\subseteq\sigma(\{\sa F\})\},$$
$$ \dcl(\bo a \bo c Z)=\{\bo x\in\dcl^\omega(Z)\mid \fo_\BB(\bo x\bo a\bo c Z)\subseteq\sigma(\{\sa D,\sa F\})\},$$
$$ \dcl(\bo b \bo c Z)=\{\bo x\in\dcl^\omega(Z)\mid\fo_\BB(\bo x\bo b\bo c Z)\subseteq\sigma(\{\sa E,\sa F\})\}.$$
It follows that $\bo a\ind[a]_Z \bo b\bo c Z$. But
$$\sa E\in\sigma(\{\sa D,\sa F\})\cap\sigma(\{\sa E,\sa F\})\setminus\sigma(\{\sa F\}),$$
so
$$\bo b\in\acl(\bo a\bo c Z)\cap \acl(\bo b\bo c Z)\setminus \acl(\bo c Z).$$
Therefore  $\bo a\nind[a]_{\bo c Z}  \bo b\bo c Z$, and base monotonicity fails.
\end{proof}

Recall from [Ad1] that the ternary relation $\ind[M] \ $ is defined by
$$A\ind[M]_C \ B \ \Leftrightarrow (\forall D\in[C,\acl(BC)]) A\ind[a]_D B.$$
$\ind[M] \ $ is the weakest ternary relation that implies $\ind[a]$ and satisfies base monotonicity.
So in both first order and continuous model theory, if $\ind[a]$ satisfies base monotonicity then $\ind[M] \ =\ind[a]$
and hence $\ind[M] \ $ satisfies symmetry.  Thus the following corollary is an improvement of Proposition \ref{p-a-nobase-monotonicity}.

\begin{cor} \label{c-no-symmetry} For every $T$, $\ind[M] \ \ $ in $T^R$ does not satisfy symmetry.
\end{cor}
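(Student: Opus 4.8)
As observed just before the statement, this corollary improves Proposition~\ref{p-a-nobase-monotonicity}, since base monotonicity of $\ind[a]$ would force $\ind[M]=\ind[a]$ and hence symmetry of $\ind[M]$. To prove the corollary directly I would reuse the configuration from the proof of Proposition~\ref{p-a-nobase-monotonicity}: let $\sa D,\sa F$ be independent events of measure $1/2$, put $\sa E=\sa D\sqcap\sa F$, and set $\bo a=1_{\sa D}$, $\bo b=1_{\sa E}$, $\bo c=1_{\sa F}$ and $Z=\{\bo 0,\bo 1\}$. The plan is to establish
$$\bo b\bo c Z \ \ind[M]_Z\ \bo a \qquad\text{but}\qquad \bo a \ \nind[M]_Z\ \bo b\bo c Z,$$
which contradicts symmetry.

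The negative assertion is immediate: the proof of Proposition~\ref{p-a-nobase-monotonicity} yields $\bo a\nind[a]_{\bo c Z}\bo b\bo c Z$, and $\bo c Z\in[Z,\acl(\bo b\bo c Z)]$, so $\bo a\ind[M]_Z\bo b\bo c Z$ fails by definition of $\ind[M]$. All the work is in the positive assertion, which --- by symmetry of $\ind[a]$ and the definition of $\ind[M]$ --- amounts to proving $\bo a\ind[a]_D\bo b\bo c Z$ for \emph{every} $D$ with $Z\subseteq D\subseteq\acl(\bo a Z)=\dcl(\bo a Z)$. Recall from the proof of Proposition~\ref{p-a-nobase-monotonicity} that $\dcl(\bo a Z)=\{\bo x\in\dcl^\omega(Z):\fo_\BB(\bo x\bo a Z)\subseteq\sigma(\{\sa D\})\}$, while by Lemma~\ref{l-char-functions} $\dcl_\BB(\bo a Z)=\sigma(\{\sa D\})$, $\dcl_\BB(\bo b\bo c Z)=\sigma(\{\sa E,\sa F\})$, and $\sa D\notin\sigma(\{\sa E,\sa F\})$ because $\sa D$ and $\sa F$ are independent. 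Fixing such a $D$, the containments $Z\subseteq D\subseteq\dcl(\bo a Z)$ give $\dcl(\bo a D)=\dcl(\bo a Z)$, $\dcl_\BB(\bo a D)=\dcl_\BB(\bo a Z)=\sigma(\{\sa D\})$, $\dcl(D)\subseteq\dcl(\bo a Z)$ and $\dcl_\BB(D)\subseteq\sigma(\{\sa D\})$ (Fact~\ref{f-definableclosure}, Remark~\ref{r-dcl-B}). Feeding these into Corollary~\ref{c-alg-indep} and Fact~\ref{f-dcl3}, and splitting on whether $\sigma(\{\sa D\})\cap\dcl_\BB(\bo b\bo c Z D)$ is $\{\top,\bot\}$ or $\sigma(\{\sa D\})$ (the only options, as $\sigma(\{\sa D\})$ has no intermediate sub-$\sigma$-algebra), the verification of both clauses of $\bo a\ind[a]_D\bo b\bo c Z$ reduces to the single claim
$$(\star)\qquad\sa D\in\dcl_\BB(\bo b\bo c Z D)\ \Longrightarrow\ \sa D\in\dcl_\BB(D).$$

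For $(\star)$: as just noted, $\dcl_\BB(D)$ is a sub-$\sigma$-algebra of $\sigma(\{\sa D\})$, hence $\{\top,\bot\}$ or $\sigma(\{\sa D\})$; in the latter case $(\star)$ holds trivially, so assume $\dcl_\BB(D)=\{\top,\bot\}$. Then each $\bo d\in D$ has $\fo_\BB(\bo d Z)\subseteq\dcl_\BB(D)=\{\top,\bot\}$ and is pointwise definable over $Z$ (as $D\subseteq\dcl(\bo a Z)\subseteq\dcl^\omega(Z)$), so $\bo d\in\dcl(Z)$ by Fact~\ref{f-dcl3}. Hence $D\subseteq\dcl(Z)$, so $\bo b\bo c Z D\subseteq\dcl(\bo b\bo c Z)$ and $\dcl_\BB(\bo b\bo c Z D)\subseteq\dcl_\BB(\bo b\bo c Z)=\sigma(\{\sa E,\sa F\})\not\ni\sa D$, making the hypothesis of $(\star)$ vacuous. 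This proves $(\star)$, hence the positive assertion, hence the corollary.

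I expect $(\star)$ to be the only step requiring thought; it pins down the source of the asymmetry, namely that the base enlargement from $Z$ to $\bo c Z$ which destroyed $\ind[a]$ in Proposition~\ref{p-a-nobase-monotonicity} simply cannot occur inside $[Z,\acl(\bo a Z)]$ (in particular $\bo c\notin\acl(\bo a Z)$), so that, enlarging the base on the $\bo a$-side instead, algebraic independence persists across the whole interval. Everything surrounding $(\star)$ is routine bookkeeping with Corollary~\ref{c-alg-indep}, Fact~\ref{f-dcl3}, Remark~\ref{r-dcl-B} and the finite measure algebra $\sigma(\{\sa D,\sa F\})$.
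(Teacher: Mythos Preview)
Your proof is correct and follows the paper's approach: both arguments exhibit $\bo b\bo c Z\ind[M]_Z\ \bo a$ together with $\bo a\nind[M]_Z\ \bo b\bo c Z$, the latter coming (as you say) from $\bo a\nind[a]_{\bo c Z}\bo b\bo c Z$ via $\bo c Z\in[Z,\acl(\bo b\bo c Z)]$. The paper's proof simply asserts the positive direction with ``it follows from the proof of \ref{p-a-nobase-monotonicity},'' whereas you supply the verification in full --- the dichotomy on $\dcl_\BB(D)\subseteq\sigma(\{\sa D\})$ and the key step $(\star)$ are exactly the details that make this claim go through.
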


\begin{proof}
We use the notation introduced in the proof of  \ref{p-a-nobase-monotonicity}.  Since $\bo a\nind[a]_{\bo c Z}  \bo b\bo c Z$ and
$\ind[M]\Rightarrow\ind[a]$, we have $\bo a\nind[M]_{\bo c Z} \ \bo b\bo c Z.$  However, it follows from the proof of \ref{p-a-nobase-monotonicity} that
$\bo b\bo c Z\ind[M]_Z \ \bo a,$ so $\ind[M] \ $ does not satisfy symmetry.
\end{proof}

As an example, we look at the relations $\ind[a]$ and $\ind[M] \ $ in the continuous theory $\DLO^R$, the randomization of the theory of dense linear order without endpoints.
We will see that these relations are much more complicated in $\DLO^R$ than they are in $\DLO$.

\begin{ex}  Let $T=\DLO$.  In the big model $\cu M$ of $\DLO,$ we have $\acl(A)=\dcl(A)=A$ for every set $A$.  Thus in $\cu M$ the lattice
of algebraically closed sets is modular, $\ind[a]=\ind[M]  \ $, and $\ind[a]$ is a strict independence relation.

In the big model $\cu N$ of $\DLO^R$, $\ind[a]$ does not satisfy base monotonicity by Proposition \ref{p-a-nobase-monotonicity}, and $\ind[M] \ $ does not satisfy symmetry by
Corollary \ref{c-no-symmetry}.  Proposition 4.2.3 of [AGK1] shows that for every finite
set $A\subseteq\cu K$, $\dcl(A)$ is the smallest set $D\supseteq A$ such that whenever $\bo a, \bo b, \bo c, \bo d\in D$, the element of $\cu K$
that agrees with $\bo c$ on  $\l \bo a <\bo b\rr$ and agrees with  $\bo d$ on $\neg\l \bo a < \bo b\rr$ belongs to $D$.

Let $\bo a\vee\bo b$ and $\bo a\wedge\bo b$ denote the pointwise maximum and minimum, respectively.
We leave it to the reader to work out the following characterizations
of $A\ind[a]_C B$ and $A\ind[M]_C B$ in the simple case that $A, B, C$ are singletons in $\cu N$.
\begin{enumerate}
\item $\bo{a}\ind[a]_\emptyset \bo{b}\Leftrightarrow\bo{a}\not=\bo{b}.$
\item  $\acl(\bo a\bo b)=\{\bo a,\bo b,\bo a\vee\bo b,\bo a\wedge\bo b\}$.
\item $\bo{a}\ind[a]_{\bo{c}}\bo{b}\Leftrightarrow
\{\bo{a},\bo{c},\bo{a}\vee\bo{c},\bo{a}\wedge\bo{c}\}\cap \{\bo{b},\bo{c},\bo{b}\vee\bo{c},\bo{b}\wedge\bo{c}\}=\{\bo{c}\}.$
\end{enumerate}
To see where base monotonicity fails for $\ind[a]$, let $\sa E$ be an event with $0<\mu(\sa E)<1$ and take $\bo a, \bo b,\bo c$ so that
$\bo a=\bo b<\bo c$ on $\sa E$ and $\bo c<\bo a<\bo b$ on $\neg\sa E$. Then use (1) and (3) to show that $\bo a\ind[a]_{\emptyset} \bo b$ but
$\bo a\nind[a]_{\bo c} \bo b$.
\begin{itemize}
\item[(4)] If $\bo b\in\{\bo b\vee\bo c,\bo b\wedge\bo c\}$,  then $\bo{a}\ind[M]_{\bo{c}}\bo{b}\Leftrightarrow\bo{a}\ind[a]_{\bo{c}}\bo{b}$.
\item[(5)]  If $\bo b\notin\{\bo b\vee\bo c,\bo b\wedge\bo c\}$, then $\bo{a}\ind[M]_{\bo{c}}\bo{b}$ if and only if:
$$\bo{a}\ind[a]_{\bo{c}}\bo{b}, \quad \bo{b}\notin \dcl(\{\bo{a},\bo{c},\bo{b}\wedge\bo c\}), \quad \bo{b}\notin \dcl(\{\bo{a},\bo{c},\bo{b}\vee\bo c\}).$$
\end{itemize}
To see where  symmetry fails for $\ind[M] \ $, partition $\Omega$ into three events $\{\sa D, \sa E, \sa F\}$  of positive
measure.  Take $\bo a, \bo b, \bo c$ so that $\bo a=\bo b<\bo c$ in $\sa D$, $\bo a<\bo c<\bo b$ in $\sa E$, and $\bo c<\bo a<\bo b$ in $\sa F$.
Use (5) to show that $\bo a\ind[M]_{\bo c} \ \bo b$ but $\bo b\nind[M]_{\bo c} \ \bo a.$
\end{ex}

\section{Full Existence and Extension}  \label{s-alg-t^R}

By Proposition \ref{p-alg-indep}, $\ind[a]$ in continuous model theory satisfies symmetry and all axioms for a strict countable independence relation except for base monotonicity and extension.

\begin{rmk}
\label{r-stable-extension}  If $T$ is stable, then the relation $\ind[a]$ in the theory $T^R$ satisfies full existence and extension.
\end{rmk}

\begin{proof} by Theorem 5.1.4 in [BK],  $T^R$ is stable, so it has a unique strict independence relation.  This relation satisfies full existence and is stronger than $\ind[a]$.
Then by Remark \ref{r-weaker}, $\ind[a]$ satisfies full existence.  By Fact \ref{f-fe-vs-ext},
$\ind[a]$ in $T^R$ satisfies extension.
\end{proof}

Our main result in this section is another sufficient condition for algebraic independence in $T^R$ to satisfy full existence and extension

\begin{thm} \label{t-fe} Suppose $T$ has $\acl=\dcl$.  Then the relation $\ind[a]$ in $T^R$ satisfies full existence and extension.
\end{thm}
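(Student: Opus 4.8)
The plan is to deduce extension from full existence via Fact \ref{f-fe-vs-ext}(i), and to prove full existence for $\ind[a]$ in $T^R$ by a fiberwise construction, using crucially that $T$ has $\acl=\dcl$.

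\emph{Reduction.} By Proposition \ref{p-alg-indep}, $\ind[a]$ in $T^R$ has invariance, monotonicity, transitivity, normality and symmetry, so by Fact \ref{f-fe-vs-ext}(i) it suffices to prove that $\ind[a]$ has full existence. Using monotonicity together with the idempotence of $\acl$, the general case of full existence reduces to the case in which $C=\acl(C)=\dcl(C)$ and $C\subseteq A\cap B$. In that case $A'C=A'$ and $BC=B$ once we arrange (as we may, since $\equiv_C$ fixes $C$) that $C\subseteq A'$, so by Corollary \ref{c-alg-indep} the goal becomes: find $A'\equiv_C A$ with $\dcl(A')\cap\dcl(B)=\dcl(C)$ and $A'\ind[a\BB]_C B$.

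\emph{The construction and the main obstacle.} Realize $\cu N$ as the reduction of a pre-complete pre-model $\cu P=(\cu J,\cu F)$ over an atomless complete probability space $(\Omega,\cu F,\mu)$ as in Fact \ref{f-neat}, and fix representatives in $\cu J$ of the elements of $A,B,C$. For almost every $\omega$ we have $C(\omega)\subseteq A(\omega)\cap B(\omega)$ in $\cu M$, and since $T$ has $\acl=\dcl$, algebraic independence in the first order theory $T$ has full existence (this is the usual first order fact, and moreover $\acl^{\cu M}=\dcl^{\cu M}$). Using this fiberwise, together with the measurable selection and representation machinery of [AGK1], I would build $A'$ with $C\subseteq A'$ so that (i) for almost every $\omega$, $A'(\omega)$ realizes $\tp_{\cu M}(A(\omega)/C(\omega))$ and $\dcl^{\cu M}(A'(\omega))\cap\dcl^{\cu M}(B(\omega))=\dcl^{\cu M}(C(\omega))$, and (ii) the $\sigma$-algebra $\dcl_\BB(A'C)$ is probabilistically conditionally independent of $\dcl_\BB(BC)$ over $\dcl_\BB(C)$, the latter being arranged by using atomlessness of $(\Omega,\cu F,\mu)$ to place the randomness coding $A'$ in a part of the measure algebra independent, over $\dcl_\BB(C)$, of the part coding $B$. \textbf{The hardest part will be to achieve (i) and (ii) simultaneously and measurably}: the fiberwise-generic realizations must be chosen so that the resulting elements genuinely belong to $\cu K$ and the events they define belong to $\cu E$, while the global event algebra they generate is still conditionally independent of $\dcl_\BB(BC)$ over $\dcl_\BB(C)$. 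The natural route is first to replace $A$ by a fiberwise-generic $A^{*}\equiv_C A$ having the same conditional distribution over $\dcl_\BB(C)$, and then to couple a fresh copy of $A^{*}$ to $B$ conditionally independently over $\dcl_\BB(C)$, performing the fiberwise-genericity step inside the fresh copy so that it survives the coupling; making this precise is where the randomization machinery of [AGK1] and [AGK2], and the stability of the event sort of $T^R$, should enter.

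\emph{Verification.} Granting the construction, $A'\equiv_C A$ because, by (i), the random type $\omega\mapsto\tp_{\cu M}(A'(\omega)/C(\omega))$ agrees a.e.\ with that of $A$, and types over $C$ in $T^R$ are determined by this data. Next, by (ii) and the standard fact that two sub-$\sigma$-algebras of a probability algebra which are conditionally independent over a common sub-$\sigma$-algebra $\cu H$ meet exactly in $\cu H$, we get $\dcl_\BB(A')\cap\dcl_\BB(B)=\dcl_\BB(C)$; since $\acl_\BB=\dcl_\BB$ by Fact \ref{f-acl=dcl}, this is $A'\ind[a\BB]_C B$. Finally, let $\bo d\in\dcl(A')\cap\dcl(B)$. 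By the countable character of definable closure (Fact \ref{f-separable}) and Fact \ref{f-dcl3}, $\bo d$ is pointwise definable over $A'$ and over $B$, so for almost every $\omega$, $\bo d(\omega)\in\dcl^{\cu M}(A'(\omega))\cap\dcl^{\cu M}(B(\omega))=\dcl^{\cu M}(C(\omega))$ by (i); moreover $\fo_\BB(\bo d C)\subseteq\dcl_\BB(A')\cap\dcl_\BB(B)=\dcl_\BB(C)$ by Fact \ref{f-separable} and Remark \ref{r-dcl-B}. Unwinding pointwise definability (extracting a countable witness over $C$ using that $\bo d\in\dcl(B_0)$ for some countable $B_0\subseteq B$), Fact \ref{f-dcl3} yields $\bo d\in\dcl(C)$. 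Hence $\dcl(A')\cap\dcl(B)=\dcl(C)$, and with the previous step and Corollary \ref{c-alg-indep} we obtain $A'\ind[a]_C B$. This proves full existence, and extension follows from Fact \ref{f-fe-vs-ext}(i).
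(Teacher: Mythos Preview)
Your construction has a genuine internal tension that makes (i) and (ii) incompatible. If for almost every $\omega$ the tuple $A'(\omega)$ realizes $\tp^{\cu M}(A(\omega)/C(\omega))$, then for every first order formula $\theta$ and every $\vec{\bo c}\in C^{<\BN}$ one has $\l\theta(\vec{\bo a}',\vec{\bo c})\rr=\l\theta(\vec{\bo a},\vec{\bo c})\rr$ as events, so by Fact~\ref{f-separable} $\dcl_\BB(A'C)=\dcl_\BB(AC)$. Hence (ii) is not something your construction can \emph{arrange}: it is equivalent to the \emph{original} $A,B,C$ already having conditionally independent event algebras over $\dcl_\BB(C)$, which need not hold. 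Conversely, any genuine ``fresh copy coupled conditionally independently over $\dcl_\BB(C)$'' necessarily scrambles the pointwise types (the fiber over $\omega$ no longer sees $A(\omega)$ but rather $A(\omega')$ for a random $\omega'$), so $A'(\omega)$ will not in general realize $\tp^{\cu M}(A(\omega)/C(\omega))$, and both your argument for $A'\equiv_C A$ and your use of (i) to get $\bo d(\omega)\in\dcl^{\cu M}(C(\omega))$ collapse. (There is also a secondary gap: even granting (i), you get $\bo d(\omega)\in\dcl^{\cu M}(C(\omega))$ for the \emph{full} $C$, but $\dcl^\omega$ needs a single countable $C_0\subseteq C$ working for almost every $\omega$; your parenthetical does not supply this.)

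The paper avoids the tension by a different decomposition. It first invokes full existence for $\ind[a\BB]$ (Fact~\ref{f-event-aB}) to reduce to the case $A\ind[a\BB]_C\,B$, with $C=\dcl(C)$, $A=\dcl(AC)\setminus C$, $B=\dcl(BC)\setminus C$. It then looks for $A'$ that \emph{preserves the events exactly}, i.e.\ $\l\theta(\vec{\bo a}',\vec{\bo c})\rr=\l\theta(\vec{\bo a},\vec{\bo c})\rr$ for all $\theta,\vec{\bo c}$, so that $A'\equiv_C A$ and $\dcl_\BB(A'C)=\dcl_\BB(AC)$ come for free and the event-sort independence is inherited. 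The remaining task is only to force $A'\cap B=\emptyset$, and this is done by a compactness argument: for each $\bo a\in A$ set $\varepsilon(\bo a)=\inf\{\,1-\mu(\l a\in\dcl^{\cu M}(D)\rr):D\subseteq C\text{ countable}\,\}$, and add to the event-preservation conditions the requirements $d_\BK(\bo a',\bo b)\ge\varepsilon(\bo a)$ for all $\bo b\in B$. Finite satisfiability uses full existence of $\ind[a]$ in $\cu M$, the hypothesis $\acl=\dcl$ in $T$, and Fact~\ref{f-neat}(5). For the resulting $A'$: if $\varepsilon(\bo a)>0$ the distance bound gives $\bo a'\notin B$; if $\varepsilon(\bo a)=0$ then $\bo a\in\dcl^\omega(C)$, so by Fact~\ref{f-dcl3} some $\l\theta(\bo a,\vec{\bo c})\rr\in\dcl_\BB(AC)\setminus\dcl_\BB(C)$, and event-preservation together with $A\ind[a\BB]_C\,B$ yields $\l\theta(\bo a',\vec{\bo c})\rr\notin\dcl_\BB(BC)$, whence $\bo a'\notin B$. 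The key idea you are missing is that one should not try to move the event algebra at all; reduce first so that it is already right, then perturb only in the $\BK$-sort.
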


\begin{proof}
By Fact \ref{f-fe-vs-ext} and Proposition \ref{p-alg-indep}, if $\ind[a]$ over $\cu N$ has full existence, then it has extension.
By Remark \ref{r-acl=dcl}, to prove full existence
we must show that for all small $A,B,C$, there is $A'\equiv_{C} A$ such that
$$[\dcl(A'C)\cap\dcl(BC)=\dcl(C)] \wedge A'\ind[a\BB]_C \ B.$$
In view of Fact \ref{f-definableclosure}  and Remark \ref{r-dcl-B}, we may assume without loss of generality that
$C=\acl(C)$, $A=\acl(AC)\setminus \acl(C)$, and $B=\acl(BC)\setminus \acl(C)$.
Then $C=\dcl(C)$, $A=\dcl(AC)\setminus \dcl(C)$, and $B=\dcl(BC)\setminus \dcl(C)$.    By Fact \ref{f-event-aB}, the relation
$\ind[a\BB] \ \ $ over $\cu N$ has full existence.  Therefore we may also assume that $ A\ind[a\BB]_C \  B.$ By Remark \ref{r-acl=dcl},
$$\dcl_\BB(AC)\cap\dcl_\BB(BC)=\dcl_\BB(C).$$
So it suffices to show that there is $A'\equiv_CA$ such that
$$A'\cap B=\emptyset \wedge \dcl_\BB(A'C)=\dcl_\BB(AC).$$

For each element $\bo a\in A$, we
define $\varepsilon(\bo a)$ as the infimum of all the values $1-\mu(\l a\in\dcl^{\cu M}(D)\rr)$ over all countable $D\subseteq C$.
Note that $\varepsilon(\bo a)=0$ if and only if $\bo a$ is pointwise definable over some countable subset of $C$.
Add a constant symbol for each $\bo a\in A, \bo b\in B$, and $\bo c\in C$.  For each $\bo a\in A$, add a variable $\bo a'$.
Consider the set $\Gamma$ of all conditions of the form
$$\l\theta(\vec{\bo a},\vec{\bo c})\rr=\l\theta(\vec{\bo a}',\vec{\bo c})\rr\wedge
\bigwedge_{i\le|\vec{\bo a}|}d_\BK({\bo a}'_i,\bo b)\ge \varepsilon({\bo a}_i)$$
where $\theta$ is an $L$-formula, $\vec{\bo a}\in A^{<\BN}, \vec {\bo c}\in C^{<\BN}$, and $\bo b\in B$.

\

\emph{Claim 1}.  For every finite subset $\Gamma_0$ of $\Gamma$, there is a set  $A'=\{{\bo a}'\colon \bo a\in A\}$ that satisfies $\Gamma_0$
in $\cu N_{ABC}$.

\

\emph{Proof of Claim 1}:  Let $A_0, B_0, C_0$ be the set of elements of $A, B, C$ respectively that occur in $\Gamma_0$.
Then $A_0, B_0, C_0$ are finite.  If $A_0$ is empty, then $\Gamma_0$ is trivially satisfiable in ${\cu N}_{ABC}$,
so we may assume that $A_0$ is non-empty.   Let
$$A_0=\{{\bo a}_0,\ldots,{\bo a}_n\},\vec{\bo a}=\<{\bo a}_0,\ldots,{\bo a}_n\>,
C_0=\{{\bo c}_0,\ldots,{\bo c}_k\},\vec{\bo c}=\<{\bo c}_0,\ldots,{\bo c}_k\>.$$
Let $\Theta_0$ be the set of all sentences that occur on the left side of an equation in $\Gamma_0$.  Then $\Theta_0$ is finite.  By combining tuples,
 we may assume that each sentence in $\Theta_0$ has the form $\theta(\vec{\bo a},\vec{\bo c})$.

Since the algebraic independence relation over $\cu M$ satisfies full existence, and $T$ has $\acl=\dcl$, for each $\omega\in\Omega$ there exists
$$G_0(\omega)=\{g_0(\omega),\ldots,g_n(\omega)\}\subseteq M$$
such that
$$\tp^{\cu M}(G_0(\omega)/C_0(\omega))=\tp^{\cu M}(A_0(\omega)/C_0(\omega))$$
and
$$ G_0(\omega) \cap B_0(\omega)\subseteq\dcl^{\cu M}(C_0(\omega)).$$
Let $i\le n$.  Whenever $a_i(\omega)\notin\dcl^{\cu M}(C_0(\omega))$, we have $g_i(\omega)\notin\dcl^{\cu M}(C_0(\omega))$, and hence
$g_i(\omega)\notin B_0(\omega)$.

Let $Z=\{\bo 0, \bo 1\}$ be as in Lemma \ref{l-char-function}.
For each $i\le n$ let
$$\sa E_i=\l  a_i\in\dcl^{\cu M}(C_0)\rr.$$
By Fact \ref{f-glue}, for each $i$ there exists a unique element $\bo 1_{\sa E_i}\in\cu K$ that agrees with $\bo 1$ on $\sa E_i$
and agrees with $\bo 0$ on $\neg\sa E_i$.  By applying Condition (5) in Fact \ref{f-neat} to the formula
$$ \bigwedge_{\theta\in\Theta_0}(\theta(\vec u,\vec{\bo c})\leftrightarrow\theta(\vec{\bo a},\vec{\bo c}))\wedge
\bigwedge_{i=0}^n \bigwedge_{\bo b\in B_0} (1_{{\sa E}_i}=\ti 0\rightarrow u_i\ne\bo b),$$
we see that there exists a set
$$G_0=\{{\bo g}_0,\ldots,{\bo g}_n\}\subseteq \cu K$$
such that for each $\omega\in \Omega$, $\theta(\vec{\bo a},\vec{\bo c})\in\Theta_0$, $i\le n$, and $\bo b\in B_0$:
\begin{itemize}
\item
$\cu M\models \theta(\vec g(\omega),\vec c(\omega))\leftrightarrow\theta(\vec a(\omega),\vec c(\omega));$
\item if $a_i(\omega)\notin\dcl^{\cu M}(C_0(\omega))$, then $g_i(\omega)\ne b(\omega)$.
\end{itemize}
It follows that $\l\theta(\vec{\bo g},\vec {\bo c})\rr=\l\theta(\vec{\bo a},\vec {\bo c})\rr$ for each
$\theta(\vec{\bo a},\vec {\bo c})\in\Theta_0$, and that $d_\BK({\bo g}_i,{\bo b})\ge\varepsilon({\bo a}_i)$ for each $i\le n$ and $\bo b\in B_0$.
Therefore $\Gamma_0$ is satisfied by $G_0$ in $\cu N_{ABC}$, and Claim 1 is proved.

\

By saturation, $\Gamma$ is satisfied in $\cu N_{ABC}$ by some set $A'$.  $\Gamma$ guarantees that $A'\equiv_C A$ and  $\dcl_\BB(A'C)=\dcl_\BB(AC).$
It remains to show that for each $\bo a\in A$, $\bo a'\notin B$.  Let $\bo a\in A$.  By hypothesis
we have $\bo a\notin\dcl(C)$.
By Fact \ref{f-dcl3}, either $\bo a$ is not pointwise definable over a countable subset of $C$ and thus $\varepsilon(\bo a)>0$,
or there is a formula $\theta(u,\vec v)$ and a tuple $\vec{\bo c}\in C^{<\BN}$ such that
$$\l\theta(\bo a,\vec{\bo c})\rr\in\fo_\BB(\{\bo a\}\cup C)\setminus\dcl_\BB(C).$$
$\Gamma$ guarantees that $d_\BK(\bo a',B)\ge\varepsilon(\bo a)$, so in the case that $\varepsilon(\bo a)>0$ we have $\bo a'\notin B$.
$\Gamma$ also guarantees that
$$\l\theta(\bo a',\vec{\bo c})\rr=\l\theta(\bo a,\vec{\bo c})\rr,$$
so in the case that $\varepsilon(\bo a)=0$, we have
$$\l\theta(\bo a',\vec{\bo c})\rr=\l\theta(\bo a,\vec{\bo c})\rr\in\dcl_\BB(AC)\setminus\dcl_\BB(C).$$
But we are assuming that
$$\dcl_\BB(AC)\cap\dcl_\BB(BC)=\dcl_\BB(C),$$
so
$$\l\theta(\bo a',\vec{\bo c})\rr\notin\dcl_\BB(BC),$$
and hence $\bo a'\notin B$.  This completes the proof.
\end{proof}

\section{Small Local Character}

In this section we show that if $T$ has $\acl=\dcl$, then  algebraic independence in  $T^R$ has small local character.
In order to do this, we need the pointwise algebraic independence relation $\ind[a\omega] \ \ ,$  which is of interest in its own right
and will be studied further in the next section.

In the following, $\forall^c D$ means ``for all countable $D$'', and $\exists^c D$ means ``there exists a countable $D$''.


\begin{df}  Let $I$ be a ternary relation over $\cu M$ that has monotonicity.  The ternary relation $\ind[I\omega] \ \ $ over $\cu N$ (called \emph{pointwise $I$-independence}) is defined as follows.  For all small $A,B,C$, $ A\ind[I\omega]_C \ B$ if and only if
$$ (\forall^c A'\subseteq A)(\forall^c B'\subseteq B)(\forall^c C'\subseteq C)(\exists^c D\in[C',C])A'\ind[I\omega]_{D} \ B'.$$
 \end{df}

\begin{fact}  \label{f-pointwise-a}  (Consequence of Lemma 4.1.4 in [AGK2].)
If $I$ be a ternary relation over $\cu M$ that has monotonicity, then
 for all countable $A,B,C$,
 $$ A\ind[I \omega]_C \  B\Leftrightarrow \mu(\l A\ind[I]_C B\rr)=1.$$
\end{fact}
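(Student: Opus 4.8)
The plan is to obtain this from Lemma 4.1.4 of [AGK2] together with the monotonicity of $I$. In [AGK2] the relation $\ind[I\omega]$ is set up on \emph{countable} sets, and Lemma 4.1.4 there records that for countable $A,B,C$ the set $\l A\ind[I]_C B\rr$ is an event and that $A\ind[I\omega]_C B$ holds precisely when $\mu(\l A\ind[I]_C B\rr)=1$. The definition of $\ind[I\omega]$ stated above extends this countable relation to all small sets by the nested-quantifier clause, which refers only to the countable relation; so all that must be verified is that, when $A,B,C$ are themselves countable, that clause reduces to the single condition $\mu(\l A\ind[I]_C B\rr)=1$. The asserted equivalence then follows immediately from Lemma 4.1.4.

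For the direction from the clause to the measure-one condition: assume $A\ind[I\omega]_C B$ in the sense of the definition. Since $A$, $B$, $C$ are countable we may instantiate $A'=A$, $B'=B$, $C'=C$; the clause then produces a countable $D\in[C,C]$, necessarily $D=C$, with $A\ind[I\omega]_C B$ in the countable sense, which yields $\mu(\l A\ind[I]_C B\rr)=1$ by Lemma 4.1.4. Conversely, assume $\mu(\l A\ind[I]_C B\rr)=1$, and let countable $A'\subseteq A$, $B'\subseteq B$, $C'\subseteq C$ be given. Take $D=C$; it is countable and lies in $[C',C]$. Since $I$ has monotonicity, $A(\omega)\ind[I]_{C(\omega)}B(\omega)$ implies $A'(\omega)\ind[I]_{C(\omega)}B'(\omega)$ for each $\omega$, hence $\l A\ind[I]_C B\rr\subseteq\l A'\ind[I]_C B'\rr$; by Lemma 4.1.4 the larger set is an event, so $\mu(\l A'\ind[I]_C B'\rr)=1$, which by Lemma 4.1.4 says exactly $A'\ind[I\omega]_D B'$. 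Thus the clause holds.

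The real content, and the reason the statement is attributed to [AGK2] rather than proved here from scratch, sits inside Lemma 4.1.4 — chiefly the measurability of $\l A\ind[I]_C B\rr$ for countable $A,B,C$, without which the right-hand side of the equivalence is not even meaningful. For a concrete $I$, such as $\ind[a]$ over $\cu M$, one argues this by reducing the defining condition of $\ind[I]$ (for $\ind[a]$: $\acl^{\cu M}(A(\omega)C(\omega))\cap\acl^{\cu M}(B(\omega)C(\omega))=\acl^{\cu M}(C(\omega))$) to a countable combination of first-order definable events $\l\chi(\vec d)\rr$ with $\vec d$ a tuple from $A\cup B\cup C$, each of which lies in the $\sigma$-algebra $\cu F$ by Fact \ref{f-neat}; the set is then an event with a well-defined measure. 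I expect this measurability bookkeeping, rather than the routine two-way instantiation above, to be the main obstacle.
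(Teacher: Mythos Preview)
The paper does not prove this statement at all; it is stated as a Fact and attributed to Lemma 4.1.4 of [AGK2] without further argument. Your proposal correctly unpacks what is behind that citation: you identify that the nested-quantifier definition in this paper is an extension to small sets of a countable-set relation already defined in [AGK2], and you verify (using monotonicity) that on countable sets the extended definition collapses back to the original one. That two-way instantiation is straightforward and correct, and your remark that the substantive content---measurability of $\l A\ind[I]_C B\rr$---lives inside Lemma 4.1.4 is exactly right; indeed this paper redoes that measurability argument for the specific case $I=\ind[a]$ in Lemma~\ref{l-a-measurable}, along the lines you sketch.
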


 We recall a definition from [AGK2].

\begin{df}  In $T$,  $A\ind[c]_C \ B$ (read ``$C$ covers $A$ in $B$''), is the relation that holds
if and only if for every first order formula
$\varphi(\bar x,\bar y,\bar z)\in[L]$ and all tuples $\bar{ a}\in A^{|\bar x|}$, $\bar{ b}\in B^{|\bar y|}$ and  $\bar{ c}\in C^{|\bar z|}$,
there exists $\bar{ d}\in C^{|\bar y|}$ such that
$$ \cu M\models\varphi(\bar{ a},\bar{ b},\bar{ c})\Rightarrow\varphi(\bar{ a},\bar{ d},\bar{ c}).$$
\end{df}

\begin{fact}  \label{f-covering-omega-small}  (Lemma 7.2.4 in [AGK2].)
In $T^R$, the relation $\ind[c\omega] \ \ $ has small local character.
\end{fact}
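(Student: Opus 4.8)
Here is how I would approach proving this.

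The plan is to establish small local character for $\ind[c\omega]$ directly, by constructing the base $C$ through an $\omega$-step recursion in which, at each stage, one adjoins countably many ``essential witnesses'' drawn from $B$ itself; this is in the spirit of the proofs of Facts~\ref{f-event-aB} and~\ref{f-event-dB}. So, given small sets $A,B$ and $C_0\subseteq B$ with $|C_0|\le|A|+\aleph_0$, I must produce $C$ with $C_0\subseteq C\subseteq B$, $|C|\le|A|+\aleph_0$, and $A\ind[c\omega]_C B$. Unfolding the definition of $\ind[c\omega]$ and using Fact~\ref{f-pointwise-a} (note that $\ind[c]$ in $T$ plainly has monotonicity), this last requirement says exactly: for all countable $A'\subseteq A$, $B'\subseteq B$, $C'\subseteq C$ there is a countable $D$ with $C'\subseteq D\subseteq C$ and $\mu(\l A'\ind[c]_D B'\rr)=1$.

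The heart of the matter is an \emph{essential-witness lemma}: for every $L$-formula $\varphi(\bar x,\bar y,\bar z)$, every finite tuple $\vec{\bo a}$ from $A$ of length $|\bar x|$, and every finite tuple $\vec{\bo c}$ from $\cu K$ of length $|\bar z|$, there is a \emph{countable} set $B_*(\varphi,\vec{\bo a},\vec{\bo c})\subseteq B$ such that, for every finite tuple $\vec{\bo b}$ from $B$ of length $|\bar y|$, one has $\mu\big(\l\varphi(\vec{\bo a},\vec{\bo b},\vec{\bo c})\rr\setminus\bigcup_{\vec{\bo e}}\l\varphi(\vec{\bo a},\vec{\bo e},\vec{\bo c})\rr\big)=0$, where $\vec{\bo e}$ ranges over finite tuples from $B_*(\varphi,\vec{\bo a},\vec{\bo c})$. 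This is a routine exhaustion argument (equivalently, the countable chain condition of the measure algebra of $(\Omega,\cu F,\mu)$): take $s$ to be the supremum of $\mu\big(\bigcup_{\vec{\bo b}\in F}\l\varphi(\vec{\bo a},\vec{\bo b},\vec{\bo c})\rr\big)$ over finite families $F$ of tuples from $B$, pick finite $F_k$ whose measures tend to $s$, and let $B_*$ consist of all coordinates occurring in the tuples of $\bigcup_k F_k$; then the countable union over tuples from $B_*$ has measure exactly $s$, and maximality of $s$ forces the displayed equality. I expect this to be the main obstacle: the fullness axiom (Condition~(5) of Fact~\ref{f-neat}) would readily supply witnesses for such existential formulas, but only as elements of $\cu K$, not of $B$, whereas here the witnesses must be kept inside $B$ so that one can maintain $C\subseteq B$. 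A secondary but real point is to arrange that $B_*$ depends only on $(\varphi,\vec{\bo a},\vec{\bo c})$, so that it is already available inside $C$ when later constructing $D$.

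Having fixed such a function $B_*$, set $\lambda=|A|+\aleph_0$ and define an increasing chain $C_0\subseteq C_1\subseteq\cdots$ of subsets of $B$ by starting from the given $C_0$ and letting
\[C_{n+1}=C_n\cup\bigcup\{B_*(\varphi,\vec{\bo a},\vec{\bo c}):\varphi\text{ an }L\text{-formula},\ \vec{\bo a}\in A^{<\BN},\ \vec{\bo c}\in C_n^{<\BN}\},\]
with $C=\bigcup_n C_n$. Since $L$ is countable and, by induction, $|C_n|\le\lambda$, each $C_{n+1}$ again has size at most $\lambda$; hence $|C|\le\lambda=|A|+\aleph_0$, and clearly $C_0\subseteq C\subseteq B$. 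The bookkeeping point to record is that every finite tuple $\vec{\bo c}$ from $C$ lies in $C_n^{<\BN}$ for some $n$, so in fact $B_*(\varphi,\vec{\bo a},\vec{\bo c})\subseteq C$ for \emph{every} $L$-formula $\varphi$, every $\vec{\bo a}\in A^{<\BN}$, and every finite tuple $\vec{\bo c}$ from $C$.

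To finish, I would verify $A\ind[c\omega]_C B$ in the countable form isolated above. Given countable $A'\subseteq A$, $B'\subseteq B$, $C'\subseteq C$, imitate the construction of $C$, but using only $\vec{\bo a}\in(A')^{<\BN}$ and starting from $C'$: put $D_0=C'$, $D_{n+1}=D_n\cup\bigcup\{B_*(\varphi,\vec{\bo a},\vec{\bo c}):\varphi,\ \vec{\bo a}\in(A')^{<\BN},\ \vec{\bo c}\in D_n^{<\BN}\}$, and $D=\bigcup_n D_n$. Each $D_n$ is countable, and by the bookkeeping point every $B_*(\varphi,\vec{\bo a},\vec{\bo c})$ occurring here (with $\vec{\bo a}\in(A')^{<\BN}\subseteq A^{<\BN}$ and $\vec{\bo c}\in D_n^{<\BN}\subseteq C^{<\BN}$) lies in $C$; hence $D$ is countable with $C'\subseteq D\subseteq C$. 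Now fix an $L$-formula $\varphi(\bar x,\bar y,\bar z)$ and tuples $\vec{\bo a}\in(A')^{<\BN}$, $\vec{\bo b}\in(B')^{<\BN}$, $\vec{\bo c}\in D^{<\BN}$; choosing $n$ with $\vec{\bo c}\in D_n^{<\BN}$ we have $B_*(\varphi,\vec{\bo a},\vec{\bo c})\subseteq D_{n+1}\subseteq D$, and the essential-witness lemma shows that for all $\omega$ outside a null set, whenever $\cu M\models\varphi(\vec{\bo a}(\omega),\vec{\bo b}(\omega),\vec{\bo c}(\omega))$ there is a finite tuple $\vec{\bo e}$ from $B_*(\varphi,\vec{\bo a},\vec{\bo c})$ with $\cu M\models\varphi(\vec{\bo a}(\omega),\vec{\bo e}(\omega),\vec{\bo c}(\omega))$; then $\vec{\bo e}(\omega)$ is a tuple from $D(\omega)$ serving as the witness required in the definition of $\ind[c]$ in $T$. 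Intersecting these conull sets over the countably many $\varphi$ and the countably many triples $(\vec{\bo a},\vec{\bo b},\vec{\bo c})$ with $\vec{\bo a}\in(A')^{<\BN}$, $\vec{\bo b}\in(B')^{<\BN}$, $\vec{\bo c}\in D^{<\BN}$ yields $A'(\omega)\ind[c]_{D(\omega)}B'(\omega)$ for almost every $\omega$, i.e.\ $\mu(\l A'\ind[c]_D B'\rr)=1$, as needed.
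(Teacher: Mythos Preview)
The paper does not give its own proof of this statement; it is simply cited as Lemma~7.2.4 of [AGK2], so there is no in-paper argument to compare yours against.

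Your argument is correct and is the natural direct construction one would expect. The essential-witness lemma is exactly the right tool: the exhaustion step works because for fixed $(\varphi,\vec{\bo a},\vec{\bo c})$ the family $\{\l\varphi(\vec{\bo a},\vec{\bo b},\vec{\bo c})\rr:\vec{\bo b}\in B^{|\bar y|}\}$ is a directed family of events in a probability space, so its essential supremum is attained by a countable subfamily. The two-level closure (building $C$ from $C_0$, then $D$ from $C'$ inside $C$) is also the standard device to guarantee both $C\subseteq B$ with the right cardinality bound and $D\subseteq C$ countable containing a prescribed $C'$. The only cosmetic point worth tightening in a final write-up is that the tuples $\vec{\bo e}$ in the essential-witness lemma should be specified to have length $|\bar y|$ (matching $\vec{\bo b}$), and you should note explicitly that $\l A'\ind[c]_D B'\rr\in\cu F$ (this follows just as in Lemma~\ref{l-a-measurable}, replacing algebraical formulas by arbitrary $L$-formulas). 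Neither affects the validity of the argument.
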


\begin{lemma}  \label{l-ind[c]-implies-ind[a]}
In $T$, $\ind[c]\Rightarrow\ind[a].$
\end{lemma}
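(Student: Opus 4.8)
The plan is to show the contrapositive-flavored implication directly: assume $A\ind[c]_C B$ in $T$ and derive $A\ind[a]_C B$, i.e. $\acl(AC)\cap\acl(BC)=\acl(C)$. Since $T$ need not have $\acl=\dcl$, I work with $\acl$ throughout. The inclusion $\acl(C)\subseteq\acl(AC)\cap\acl(BC)$ is automatic, so the content is the reverse inclusion. So let $b\in\acl(AC)\cap\acl(BC)$; I want to show $b\in\acl(C)$.

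First I would unpack $b\in\acl(AC)$: there is an algebraical formula $\varphi(u,\bar a,\bar c)$ with $\bar a\in A^{<\BN}$, $\bar c\in C^{<\BN}$ and some bound $n$ such that $\cu M\models\varphi(b,\bar a,\bar c)$ and $\cu M\models(\forall\bar x,\bar z)(\exists^{\le n}u)\varphi(u,\bar x,\bar z)$. Similarly $b\in\acl(BC)$ gives an algebraical $\psi(u,\bar b,\bar c')$ with $\bar b\in B^{<\BN}$, $\bar c'\in C^{<\BN}$, bound $m$, and $\cu M\models\psi(b,\bar b,\bar c')$. The key idea is that the covering property $\ind[c]$ lets me replace the $B$-parameters $\bar b$ by $C$-parameters while \emph{preserving} a witnessing instance. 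Concretely, consider the formula $\chi(u,\bar y,\bar z\bar z')$ which asserts $\psi(u,\bar y,\bar z')$ together with ``$u$ is among the $\le n$ solutions of $\varphi(\cdot,\bar a,\bar z)$'' (the latter is an $L$-formula with parameters in $AC$, but note $A$ here is only a covering \emph{target} for the $B$-parameters — I need to be slightly careful and treat $\bar a$ as part of the ambient parameters, not as something $C$ must cover). Apply the definition of $\ind[c]$ to the existential statement ``$(\exists u)[\psi(u,\bar b,\bar c')\wedge \varphi(u,\bar a,\bar c)]$'', phrased as $\varphi'(\bar a\,\bar c,\bar b)$ true in $\cu M$: the covering relation with $\bar a\bar c$ in the $A$-slot and the extra $C$-parameters, $\bar b$ in the $B$-slot, and $\bar c\bar c'$ in the $C$-slot, yields $\bar d\in C^{<\BN}$ with $\cu M\models(\exists u)[\psi(u,\bar d,\bar c')\wedge\varphi(u,\bar a,\bar c)]$.

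Now let $b'$ be such a witness. Then $b'$ satisfies $\varphi(u,\bar a,\bar c)$, so $b'$ is one of at most $n$ elements; but more importantly $b'\in\acl(C)$ because $\psi(u,\bar d,\bar c')$ is algebraical with all parameters $\bar d,\bar c'\in C$. This shows the \emph{set} of realizations of $\varphi(\cdot,\bar a,\bar c)$ meets $\acl(C)$, but I need $b$ itself, not just some realization, to lie in $\acl(C)$. The fix is the standard trick of working with the finite set $S=\{u:\varphi(u,\bar a,\bar c)\}$ all at once: iterate the covering argument over the finitely many elements of $S\setminus\acl(C)$, or better, apply $\ind[c]$ once to the formula saying ``there exist distinct $u_1,\dots,u_k$ each satisfying both $\psi(\cdot,\bar b,\bar c')$ and $\varphi(\cdot,\bar a,\bar c)$'' for $k=|S\cap\text{(realizations of }\psi)|\ge 1$, obtaining $k$ distinct $C$-algebraic witnesses inside $S$; since $b\in S$ and $b$ realizes $\psi$, and since $S$ has size $\le n$ while we have exhibited $k$ realizations of $\psi$ within $S$ that are $\acl(C)$-algebraic, a counting/Galois argument forces $b$ among them. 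Cleanest: the formula $\varphi(u,\bar a,\bar c)\wedge\psi(u,\bar b,\bar c')$ is algebraical (bound $\min(n,m)$) and has a realization, namely $b$; apply $\ind[c]$ to $(\exists u)[\varphi(u,\bar a,\bar c)\wedge\psi(u,\bar y,\bar c')]$ viewed with $\bar b$ in the $B$-slot, pushing $\bar b$ down to $\bar d\in C$ — but to get $b$ specifically, instead apply $\ind[c]$ to each of the finitely many facts ``$\cu M\models\varphi(b,\bar a,\bar c)\wedge\psi(b,\bar b,\bar c')$'' after adding a name: one shows $\tp(b/AC)$ is realized using only parameters from $C$ for the $B$-part, hence $b\in\acl(AC)$ via a formula algebraical over $C$, i.e. $b\in\acl(C)$.

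\textbf{Main obstacle.} The genuine difficulty is exactly this last step: the covering relation $\ind[c]$ is defined for a single implication $\varphi(\bar a,\bar b,\bar c)\Rightarrow\varphi(\bar a,\bar d,\bar c)$ and naively only tells us \emph{some} solution survives, not that our chosen $b$ does. Getting from ``a solution of the algebraical formula lies in $\acl(C)$'' to ``$b\in\acl(C)$'' requires handling the whole finite solution set simultaneously and using that $b$ is pinned down within it by the formula $\psi$; I expect this to be the crux of the argument and the place where one must phrase the $L$-formula fed into $\ind[c]$ with some care (including, in the $\bar x$-slot, all the $AC$-parameters so that $C$ is only asked to cover $B$). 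Everything else — the choice of algebraical formulas, the bound bookkeeping — is routine once that is set up correctly.
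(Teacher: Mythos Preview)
Your overall setup is correct and matches the paper's: pick $e\in\acl(AC)\cap\acl(BC)$, fix algebraical witnesses $\varphi(u,\bar a,\bar c)$ and $\psi(u,\bar b,\bar c')$, and use covering to replace $\bar b$ by some $\bar d\in C$. You also correctly identify the real obstacle: covering only gives you \emph{some} witness to $\varphi(u,\bar a,\bar c)\wedge\psi(u,\bar d,\bar c')$, not $e$ itself.

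Where your proposal falls short is the resolution of that obstacle. Your suggested fixes (iterating over $S\setminus\acl(C)$, applying $\ind[c]$ to an $\exists^{\ge k}$ statement, ``adding a name'') do not actually pin down $e$. In the $\exists^{\ge k}$ version, for instance, you obtain $k$ elements of $S$ lying in $\acl(C)$, but there is no reason these coincide with the $k$ solutions of $\varphi\wedge\psi(\cdot,\bar b,\bar c')$; the counting argument does not force $e$ to be among them. ``Adding a name'' for $e$ is not permissible since $\ind[c]$ is a relation in the original language.

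The paper's fix is a single clean step you are missing: choose $\varphi$ so that \emph{every} solution of $\varphi(u,\bar a,\bar c)$ has the same type over $AC$ as $e$. This is always possible for an algebraic element (take an algebraical formula over $AC$ with the minimal finite number of solutions containing $e$; it then isolates $\tp(e/AC)$). With that choice, once covering produces some $u$ with $\cu M\models\varphi(u,\bar a,\bar c)\wedge\psi(u,\bar d,\bar c')$, we have $\tp(u/AC)=\tp(e/AC)$; since $\psi(\cdot,\bar d,\bar c')$ is a formula over $C\subseteq AC$, it follows that $\cu M\models\psi(e,\bar d,\bar c')$ as well, and hence $e\in\acl(C)$. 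That one line---isolating the algebraic type over $AC$---is the missing idea.
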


\begin{proof}  Suppose $A, B, C$ are small  and $A\ind[c]_C B$ in $\cu M$.  Let $e\in\acl^{\cu M}(AC)\cap\acl^{\cu M}(BC)$.  Then there are algebraical formulas
$\varphi(u,\vec x,\vec z), \psi(u,\vec y,\vec w)$ and tuples $\vec a\in A^{<\BN}, \vec b\in B^{<\BN}, \vec c,\vec {c'}\in C^{<\BN}$ such that
$$\cu M\models\varphi(e,\vec a,\vec c)\wedge\psi(e,\vec b,\vec {c'})$$
and
$$(\forall u\in M)[\cu M\models \varphi(u,\vec a,\vec c)\Rightarrow \tp(u/AC)=\tp(e/AC)].$$
Then
$$\cu M\models(\exists u)[\varphi(u,\vec a,\vec c)\wedge\psi(u,\vec b,\vec {c'})].$$
Since $A\ind[c]_C B$, there exists $\vec d\in C^{<\BN}$ such that
$$\cu M\models(\exists u)[\varphi(u,\vec a,\vec c)\wedge\psi(u,\vec d,\vec {c'})].$$
Therefore
$$\cu M\models\psi(e,\vec d,\vec {c'}),$$
so $e\in\acl^{\cu M}(C).$
\end{proof}

\begin{prop}   \label{p-a-omega-small}
In $T^R$, $\ind[a\omega] \ \ $ has small local character.
\end{prop}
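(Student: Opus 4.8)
The plan is to reduce this to the covering relation. By Fact~\ref{f-covering-omega-small} the relation $\ind[c\omega]$ already has small local character over $\cu N$, so by Remark~\ref{r-weaker} it suffices to show that $\ind[c\omega]\Rightarrow\ind[a\omega]$. This in turn will follow from a general fact about pointwise independence: if $I$ and $J$ are ternary relations over $\cu M$ that have monotonicity and $\ind[I]\Rightarrow\ind[J]$ in $\cu M$, then $\ind[I\omega]\Rightarrow\ind[J\omega]$ over $\cu N$. So I would prove the Proposition by first establishing this general fact and then applying it with $I=\ind[c]$ and $J=\ind[a]$ over $\cu M$, invoking Lemma~\ref{l-ind[c]-implies-ind[a]}.

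For the general fact, suppose $A\ind[I\omega]_C B$ and fix countable $A'\subseteq A$, $B'\subseteq B$, $C'\subseteq C$. By the definition of $\ind[I\omega]$ there is a countable $D\in[C',C]$ with $A'\ind[I\omega]_D B'$, and since $A',B',D$ are all countable, Fact~\ref{f-pointwise-a} gives $\mu(\l A'\ind[I]_D B'\rr)=1$. Since $\ind[I]\Rightarrow\ind[J]$ holds in $\cu M$, it holds at each fiber: whenever $A'(\omega)\ind[I]_{D(\omega)}B'(\omega)$ we also have $A'(\omega)\ind[J]_{D(\omega)}B'(\omega)$, so $\l A'\ind[I]_D B'\rr\subseteq\l A'\ind[J]_D B'\rr$ and hence $\mu(\l A'\ind[J]_D B'\rr)=1$. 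Applying Fact~\ref{f-pointwise-a} once more, $A'\ind[J\omega]_D B'$; this is precisely the clause needed to conclude $A\ind[J\omega]_C B$. Now apply this with $I=\ind[c]$ and $J=\ind[a]$: the covering relation $\ind[c]$ has monotonicity directly from its definition (a tuple drawn from a subset is a tuple drawn from the larger set), algebraic independence has monotonicity as one of the basic axioms, and $\ind[c]\Rightarrow\ind[a]$ in $\cu M$ by Lemma~\ref{l-ind[c]-implies-ind[a]}. Hence $\ind[c\omega]\Rightarrow\ind[a\omega]$, and by Fact~\ref{f-covering-omega-small} together with Remark~\ref{r-weaker} the relation $\ind[a\omega]$ has small local character.

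The step that needs the most care is the inclusion $\l A'\ind[I]_D B'\rr\subseteq\l A'\ind[J]_D B'\rr$: one must be sure that $\ind[I]\Rightarrow\ind[J]$ is being invoked fiberwise with the correct fiber-sets $A'(\omega),D(\omega),B'(\omega)$ and that the measure-theoretic handling of these events is legitimate. But this is exactly what Fact~\ref{f-pointwise-a} (Lemma 4.1.4 of [AGK2]) is designed to supply, so I do not expect a genuine obstacle; the remainder is a routine unwinding of the definition of $\ind[I\omega]$.
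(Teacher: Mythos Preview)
Your proposal is correct and follows essentially the same route as the paper: both reduce to $\ind[c\omega]\Rightarrow\ind[a\omega]$ via Lemma~\ref{l-ind[c]-implies-ind[a]} and then invoke Fact~\ref{f-covering-omega-small} and Remark~\ref{r-weaker}. The only difference is that where the paper simply asserts ``it follows easily that $\ind[c\omega]\Rightarrow\ind[a\omega]$'' from the fiberwise inclusion $\l A\ind[c]_C B\rr\sqsubseteq\l A\ind[a]_C B\rr$, you spell out the general lifting principle (monotone $I,J$ with $\ind[I]\Rightarrow\ind[J]$ in $\cu M$ yield $\ind[I\omega]\Rightarrow\ind[J\omega]$) using Fact~\ref{f-pointwise-a}; this is exactly the content of the paper's ``follows easily.''
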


\begin{proof}  By Lemma \ref{l-ind[c]-implies-ind[a]}, for all countable $A,B,C\subseteq\cu K$, we have
$$\l A\ind[c]_C B\rr\sqsubseteq\l A\ind[a]_C B\rr.$$
It follows easily that $\ind[c\omega] \ \ \Rightarrow \ind[a\omega] \ \ .$  $\ind[c\omega] \ \ $ has small local character by Fact \ref{f-covering-omega-small},
so by Remark \ref{r-weaker}, $\ind[a\omega] \ \ $ has small local character.
\end{proof}

\begin{prop}  \label{p-I-small}
In $T^R$,  $\ind[a\omega] \ \wedge \ind[a\BB] \ $ has small local character.
\end{prop}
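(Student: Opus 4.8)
The plan is to sidestep what would otherwise be the main obstacle. The naive route is to build an increasing $\omega$-chain $C_0\subseteq C_1\subseteq\cdots$ of small subsets of $B$, alternately using small local character of $\ind[a\omega] \ $ and of $\ind[a\BB] \ $ to arrange $A\ind[a\omega]_{C_n}B$ at even stages and $A\ind[a\BB]_{C_n}B$ at odd stages, and then take $C=\bigcup_nC_n$; but this forces one to check that \emph{both} relations are preserved under increasing unions of the base parameter, and for $\ind[a\BB] \ $ that amounts to a quantitative (metric) strengthening of the identity $\dcl_\BB(AC)\cap\dcl_\BB(BC)=\dcl_\BB(C)$ which is not obviously available. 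Instead I would replace $\ind[a\BB] \ $ by the stronger relation $\ind[d\BB] \ $ of Fact \ref{f-event-dB}: it still implies $\ind[a\BB] \ $, still has small local character, and---being a genuine countable independence relation---satisfies base monotonicity (axiom (3) of Definition \ref{d-adler}). Having one of the two conjuncts with base monotonicity collapses the iteration to a single pair of applications.

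Concretely, I would first prove that $\ind[a\omega] \ \wedge\ind[d\BB] \ $ has small local character. Given small $A,B,C_0$ with $C_0\subseteq B$ and $|C_0|\le|A|+\aleph_0$, apply small local character of $\ind[d\BB] \ $ (Fact \ref{f-event-dB}) with base $C_0$ to obtain $C_1\in[C_0,B]$ with $|C_1|\le|A|+\aleph_0$ and $A\ind[d\BB]_{C_1}B$. Then apply small local character of $\ind[a\omega] \ $ (Proposition \ref{p-a-omega-small}) with base $C_1$---legitimate since $C_1\subseteq B$ and $|C_1|\le|A|+\aleph_0$---to obtain $C\in[C_1,B]$ with $|C|\le|A|+\aleph_0$ and $A\ind[a\omega]_CB$. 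Since $C\in[C_1,B]$, base monotonicity of $\ind[d\BB] \ $ applied to $A\ind[d\BB]_{C_1}B$ yields $A\ind[d\BB]_CB$ as well. Thus $C\in[C_0,B]$, $|C|\le|A|+\aleph_0$, and both $A\ind[a\omega]_CB$ and $A\ind[d\BB]_CB$ hold, which is exactly what small local character of $\ind[a\omega] \ \wedge\ind[d\BB] \ $ requires.

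It then remains only to note that $\ind[d\BB] \ \Rightarrow\ind[a\BB] \ $ gives $\ind[a\omega] \ \wedge\ind[d\BB] \ \Rightarrow\ind[a\omega] \ \wedge\ind[a\BB] \ $, whence Remark \ref{r-weaker} transfers small local character from the former relation to $\ind[a\omega] \ \wedge\ind[a\BB] \ $. The one place demanding attention is the choice of $\ind[d\BB] \ $ rather than $\ind[a\BB] \ $ itself: the two-step argument uses base monotonicity of the conjunct paired with $\ind[a\omega] \ $ in an essential way (to see that enlarging the base inside $B$ cannot destroy the already-arranged event-sort independence), and $\ind[a\BB] \ $ does not have base monotonicity by Fact \ref{f-event-aB}; so the genuinely load-bearing input is Fact \ref{f-event-dB}, namely that a base-monotone strengthening of $\ind[a\BB] \ $ with small local character exists.
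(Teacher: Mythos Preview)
Your proposal is correct and matches the paper's proof essentially step for step: the paper also passes to $\ind[d\BB]$, first applies small local character of $\ind[d\BB]$ to get $C_1\in[C_0,B]$, then small local character of $\ind[a\omega]$ to get $C_2\in[C_1,B]$, and then uses base monotonicity of $\ind[d\BB]$ (via its being a countable independence relation, Fact~\ref{f-event-dB}) to conclude $A\ind[d\BB]_{C_2}B$, finishing with Remark~\ref{r-weaker}. Your diagnosis that base monotonicity of $\ind[d\BB]$ is the load-bearing ingredient is exactly right.
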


\begin{proof}  By Fact \ref{f-event-dB},  $\ind[d\BB] \ \Rightarrow\ind[a\BB] \ \ ,$ so $\ind[a\omega] \ \wedge \ind[d\BB] \ \Rightarrow\ind[a\omega] \ \wedge \ind[a\BB] \ .$
Then by Remark \ref{r-weaker}, it suffices to show that $\ind[a\omega] \ \wedge \ind[d\BB] \ $ has small local character.

Let $A, B, C_0$ be small subsets of $\cu K$ such that $C_0\subseteq B$ and $|C_0|\le|A|+\aleph_0$.  By Fact \ref{f-event-dB}, $\ind[d\BB] \ $ has small local character, so
there is a set $C_1\in[C_0,B]$ such that $|C_1|\le|A|+\aleph_0$ and $A\ind[d\BB]_{C_1} \ B.$  By Proposition \ref{p-a-omega-small},
there is a set $C_2\in[C_1,B]$ such that $|C_2|\le|A|+\aleph_0$ and $A\ind[a\omega]_{C_2} \ B.$  By Fact \ref{f-event-dB}, $\ind[d\BB] \ $ has base monotonicity,
so $A\ind[d\BB]_{C_2} \ B.$  Therefore $\ind[a\omega] \ \wedge \ind[d\BB] \ $ has small local character.
\end{proof}

\begin{prop}  \label{p-I-implies-a-omega}  The following are equivalent:
\begin{itemize}
\item[(i)]  $T$ has $\acl=\dcl$.
\item[(ii)] In $T^R$, $\ind[a\omega] \ \wedge \ind[a\BB] \ \Rightarrow \ind[a].$
\end{itemize}
\end{prop}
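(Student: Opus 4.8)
The plan is to prove the equivalence by establishing each implication separately. For the direction (i) $\Rightarrow$ (ii), suppose $T$ has $\acl=\dcl$ and that $A\ind[a\omega]_C B$ and $A\ind[a\BB]_C B$ in $\cu N$; I want to conclude $A\ind[a]_C B$. By Corollary \ref{c-alg-indep} and Remark \ref{r-acl=dcl}, since we already have $A\ind[a\BB]_C B$, it suffices to show that $\dcl(AC)\cap\dcl(BC)=\dcl(C)$. So let $\bo b\in\dcl(AC)\cap\dcl(BC)$; I must show $\bo b\in\dcl(C)$. Using Fact \ref{f-dcl3}, membership of $\bo b$ in $\dcl(AC)$ and in $\dcl(BC)$ decomposes into a pointwise-definability clause and an event-sort clause. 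The event-sort clause is exactly what $A\ind[a\BB]_C B$ controls: from $\fo_\BB(\bo b\,AC)\subseteq\dcl_\BB(AC)$ and $\fo_\BB(\bo b\,BC)\subseteq\dcl_\BB(BC)$ together with $\dcl_\BB(AC)\cap\dcl_\BB(BC)=\dcl_\BB(C)$ one gets $\fo_\BB(\bo b\,C)\subseteq\dcl_\BB(C)$. For the pointwise clause, $\bo b\in\dcl^\omega(A_0C_0)$ for some countable $A_0\subseteq A$, $C_0\subseteq C$, and $\bo b\in\dcl^\omega(B_1C_1)$ for some countable $B_1\subseteq B$, $C_1\subseteq C$; I apply $A\ind[a\omega]_C B$ to the countable sets $A_0\cup\{\bo b\}$, $B_1\cup\{\bo b\}$, $C_0\cup C_1$ to obtain a countable $D\in[C_0\cup C_1,C]$ with $(A_0\cup\{\bo b\})\ind[a\omega]_D(B_1\cup\{\bo b\})$, hence by Fact \ref{f-pointwise-a}, $\mu(\l \{\bo b\}\cup A_0 \ind[a]_D \{\bo b\}\cup B_1\rr)=1$. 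On the almost-every $\omega$, $b(\omega)\in\acl^{\cu M}(A_0(\omega)D(\omega))\cap\acl^{\cu M}(B_1(\omega)D(\omega))=\acl^{\cu M}(D(\omega))$, and since $T$ has $\acl=\dcl$ this says $b(\omega)\in\dcl^{\cu M}(D(\omega))$; thus $\bo b\in\dcl^\omega(D)\subseteq\dcl^\omega(C)$. Combining the pointwise clause and the event clause via Fact \ref{f-dcl3} gives $\bo b\in\dcl(C)$, as desired.

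For the direction $\neg$(i) $\Rightarrow$ $\neg$(ii), suppose $T$ does \emph{not} have $\acl=\dcl$, so there is a set in some model of $T$ — by saturation we may take it inside the big model $\cu M$ — with $\acl^{\cu M}$ strictly larger than $\dcl^{\cu M}$; concretely there are finite $\vec c$ and an element $e\in\acl^{\cu M}(\vec c)\setminus\dcl^{\cu M}(\vec c)$, witnessed by an algebraical formula $\varphi(u,\vec z)$ with $\geq 2$ realizations. The idea is to spread this over $\Omega$ to manufacture a counterexample to $\ind[a\omega]\wedge\ind[a\BB]\Rightarrow\ind[a]$. Take constant functions $\bo c$ with $c(\omega)=\vec c$ for all $\omega$, and an element $\bo e\in\cu K$ whose representative satisfies $\cu M\models\varphi(e(\omega),c(\omega))$ for every $\omega$ (such $\bo e$ exists by Condition (5) of Fact \ref{f-neat}). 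Since $\varphi$ has at least two solutions over $\vec c$ in $\cu M$, I can also choose, using Fact \ref{f-glue} and Condition (5), two elements $\bo a,\bo b\in\cu K$ over $\bo c$ realizing the solution set of $\varphi(u,\vec c)$ in a ``generic'' way so that $\bo e\in\acl(\bo a\bo c)\cap\acl(\bo b\bo c)$ but $\bo e\notin\acl(\bo c)=\dcl(\bo c)$ — the last step uses that $e\notin\dcl^{\cu M}(\vec c)$, so $\bo e$ fails the pointwise-definability clause of Fact \ref{f-dcl3} over $\bo c$ — while at the same time arranging $\dcl_\BB(\bo a\bo c)=\dcl_\BB(\bo b\bo c)=\dcl_\BB(\bo c)$ (the events contributed by $\bo a,\bo b,\bo e$ over $\bo c$ are all of the form $\l\psi(\cdots,\vec c)\rr$ with $\vec c$ constant, hence trivial or already over $\bo c$) and $\mu(\l\bo a\ind[a]_{\bo c}\bo b\rr)=1$ (on each $\omega$ the sets $\acl^{\cu M}(a(\omega)\vec c)$, $\acl^{\cu M}(b(\omega)\vec c)$ meet only in $\acl^{\cu M}(\vec c)$ by a generic choice). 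Then $\bo a\ind[a\BB]_{\bo c}\bo b$ and, by Fact \ref{f-pointwise-a}, $\bo a\ind[a\omega]_{\bo c}\bo b$, yet $\bo e$ witnesses $\bo a\nind[a]_{\bo c}\bo b$, contradicting (ii).

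The main obstacle I anticipate is the construction in the $\neg$(i) $\Rightarrow$ $\neg$(ii) direction: I need to simultaneously achieve $\bo e\in\acl(\bo a\bo c)\cap\acl(\bo b\bo c)$, $\bo e\notin\acl(\bo c)$, \emph{and} $\mu(\l\bo a\ind[a]_{\bo c}\bo b\rr)=1$ — the first two say the two algebraic closures over $\bo c$ share $\bo e$, the third says they share nothing not already over $\bo c$, and reconciling these requires $\bo e$ to sit in the pointwise part but outside the event part of the closures, which is precisely why $\acl\neq\dcl$ in $T$ is the exact obstruction. The delicate point is to pick $a(\omega)$ and $b(\omega)$ among the (at least two) solutions of $\varphi(u,\vec c)$ so that on each fixed fiber the first-order algebraic closures $\acl^{\cu M}(a(\omega)\vec c)$ and $\acl^{\cu M}(b(\omega)\vec c)$ intersect in exactly $\acl^{\cu M}(\vec c)$ while still both containing $e(\omega)$ — which forces $e(\omega)\in\acl^{\cu M}(\vec c)$, consistent since $e\in\acl^{\cu M}(\vec c)$; the subtlety is only that $e(\omega)\notin\dcl^{\cu M}(\vec c)$, so no \emph{functional} formula picks it out, which is what makes $\bo e\notin\dcl(\bo c)$ despite $\bo e\in\acl(\bo c)$ — wait, this needs $\acl^{\cu N}(\bo c)\subsetneq$ the relevant set; here is where Fact \ref{f-acl=dcl} ($T^R$ always has $\acl=\dcl$) and Fact \ref{f-dcl3} must be used carefully: $\bo e\notin\dcl(\bo c)=\acl(\bo c)$ because $\bo e$ is not pointwise definable over $\bo c$ (as $e\notin\dcl^{\cu M}(\vec c)$). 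Everything else is routine bookkeeping with Facts \ref{f-neat}, \ref{f-glue}, \ref{f-dcl3}, and \ref{f-pointwise-a}.
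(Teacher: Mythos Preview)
Your (i) $\Rightarrow$ (ii) direction is essentially the paper's argument, with one slip: you apply the definition of $A\ind[a\omega]_C \ B$ to the countable sets $A_0\cup\{\bo b\}$ and $B_1\cup\{\bo b\}$, but $\bo b$ lies in $\dcl(AC)\cap\dcl(BC)$, not in $A$ or $B$, so these are not subsets of $A$ and $B$ as the definition requires. The fix is trivial---just apply the definition to $A_0$, $B_1$, $C_0\cup C_1$ as the paper does; since $b(\omega)\in\dcl^{\cu M}(A_0(\omega)C_0(\omega))\subseteq\acl^{\cu M}(A_0(\omega)D(\omega))$ and similarly with $B_1$, the pointwise independence $A_0\ind[a]_D B_1$ already forces $b(\omega)\in\acl^{\cu M}(D(\omega))$. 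Otherwise this direction is correct.

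For $\neg$(i) $\Rightarrow$ $\neg$(ii), your construction is considerably more involved than necessary and, as written, not clearly realizable. You try to build three distinct elements $\bo a,\bo b,\bo e$ with $\bo e\in\dcl(\bo a\bo c)\cap\dcl(\bo b\bo c)$ and $\bo a\ind[a\omega]_{\bo c}\ \bo b$; but $\bo e\in\dcl(\bo a\bo c)$ forces $e(\omega)\in\dcl^{\cu M}(a(\omega)\vec c)$ almost surely, and for a ``generic'' solution $a(\omega)$ of $\varphi(u,\vec c)$ there is no reason this should hold---it depends delicately on the structure of the finite solution set. The paper sidesteps all of this with a much simpler counterexample: take $A=B=\{\bo b\}$ where $\bo b$ realizes the type of $a\in\acl^{\cu M}(C)\setminus\dcl^{\cu M}(C)$ over the constant tuple $D$ (your $\bo c$). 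Then $\bo b\ind[a\omega]_D\ \bo b$ holds because pointwise $b(\omega)\in\acl^{\cu M}(D(\omega))$, and $\bo b\ind[a\BB]_D\ \bo b$ holds because all events over $\bo b D$ are trivial; yet $\bo b\notin\dcl(D)=\acl(D)$ since $\bo b$ is not pointwise definable over $D$, so $\bo b\nind[a]_D\bo b$. You do identify exactly this mechanism in your final paragraph (the failure of pointwise definability is what breaks $\bo e\in\acl(\bo c)$), so the core insight is there---but you should drop the auxiliary $\bo a,\bo b$ entirely and use anti-reflexivity of $\ind[a]$ directly.
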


\begin{proof}
Suppose that (i) fails.  Then in $\cu M$ there is a finite set $C$ and an element $a\in\acl^{\cu M}(C)\setminus\dcl^{\cu M}(C)$.
By Fact \ref{f-neat} and saturation, there is  an element $\bo b$ and a finite set $D$ in $\cu K$ such that for each first order formula $\varphi(u,\vec v)$,
if $\cu M\models \varphi(a,C)$ then $\mu(\l\varphi(\bo b,D)\rr)=1$ in $\cu N$.  Therefore in $\cu N$ we have
$\bo b\ind[a\omega]_{D} \ \bo b\wedge \bo b\ind[a\BB]_{D} \ \bo b,$ but $\bo b\notin \acl(D).$   Then $\bo b\nind[a]_D \bo b$, so (ii) fails.

Now suppose (i) holds, and assume that $A\ind[a\omega]_C \ B\wedge A\ind[b\BB]_C \ B.$  We prove that $A\ind[a]_C B.$  By Remark \ref{r-acl=dcl},
it suffices to show that $\dcl(AC)\cap\dcl(BC)\subseteq\dcl(C).$  Let $\bo d\in\dcl(AC)\cap\dcl(BC).$  By Fact \ref{f-dcl3},
\begin{equation}  \label{eq a}
\bo d\in\dcl^\omega(AC),\quad \bo d\in\dcl^\omega(BC),
\end{equation}
and
$$\fo_{\BB}(\bo d AC)\subseteq \dcl_\BB(AC),\quad \fo_{\BB}(\bo d BC)\subseteq \dcl_\BB(BC).$$
By Fact \ref{f-separable},
$$\dcl_{\BB}(\bo d AC)\subseteq \dcl_\BB(AC),\quad \dcl_{\BB}(\bo d BC)\subseteq \dcl_\BB(BC).$$
Then
$$\dcl_{\BB}(\bo d C)\subseteq \dcl_\BB(AC)\cap \dcl_\BB(BC).$$
Since $A\ind[a\BB]_C \ B,$ we have
\begin{equation} \label{eq-b}
\dcl_{\BB}(\bo d C)\subseteq \dcl_\BB(C).
\end{equation}
We next show that
\begin{equation} \label{eq-c}
\bo d\in\dcl^\omega(C).
\end{equation}
By Fact \ref{f-dcl3}, it will then follow that $\bo d\in\dcl(C)$, as required.

By (\ref{eq a}),
there are countable sets $A_0\subseteq A, B_0\subseteq B, C_0\subseteq C$ such that
$$\mu(\l \bo d\in\dcl^{\cu M}(A_0 C_0)\rr)=\mu(\l\bo d\in\dcl^{\cu M}(B_0 C_0)\rr)=1.$$
Since $A\ind[a\omega]_C \ B$, there is a countable set $C_1\in[C_0,C]$ such that
$$\mu(\l A_0\ind[a]_{C_1} B_0\rr)=1.$$
Then
$$\mu(\l \dcl^{\cu M}(A_0 C_0)\cap\dcl^{\cu M}(B_0 C_0)\subseteq\acl^{\cu M}(C_1)\rr)=1,$$
so $\mu(\l\bo d\in\acl^{\cu M}(C_1)\rr)=1$, and hence $\bo d\in\acl^\omega(C)$.  By (i), $\acl^\omega(C)=\dcl^\omega(C)$, so (\ref{eq-c}) holds.
\end{proof}

\begin{thm}  \label{t-small-local-char}
Suppose $T$ has $\acl=\dcl$.  Then the relation $\ind[a]$ in $T^R$ has small local character.
\end{thm}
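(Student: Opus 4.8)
The plan is to combine the small local character of $\ind[a\omega] \ \wedge \ind[a\BB] \ $ (Proposition \ref{p-I-small}) with the implication $\ind[a\omega] \ \wedge \ind[a\BB] \ \Rightarrow \ind[a]$ that holds when $T$ has $\acl=\dcl$ (Proposition \ref{p-I-implies-a-omega}). The key point is that small local character is inherited by weaker relations — this is exactly Remark \ref{r-weaker}, which says that if $\ind[I]\Rightarrow\ind[J]$ and $\ind[I]$ has small local character, then so does $\ind[J]$.

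So the proof is essentially three lines. First I would invoke Proposition \ref{p-I-small} to get that $\ind[a\omega] \ \wedge \ind[a\BB] \ $ has small local character in $T^R$. Second, since $T$ has $\acl=\dcl$, Proposition \ref{p-I-implies-a-omega} gives $\ind[a\omega] \ \wedge \ind[a\BB] \ \Rightarrow \ind[a]$ in $T^R$. Third, apply Remark \ref{r-weaker} with $\ind[I] = \ind[a\omega] \ \wedge \ind[a\BB] \ $ and $\ind[J] = \ind[a]$ to conclude that $\ind[a]$ has small local character.

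There is essentially no obstacle here: all the real work has been done in the preceding propositions. The only thing one should double-check is that Remark \ref{r-weaker} applies to the conjunction relation $\ind[a\omega] \ \wedge \ind[a\BB] \ $ — but this is fine, since a conjunction of two ternary relations is again a ternary relation over $\cu N$ in the sense required, and the notion $\ind[I]\Rightarrow\ind[J]$ is just pointwise implication of the relations, which we have verified. One might also remark, for the reader's benefit, that by Fact \ref{f-countably-localchar}(i) this immediately yields that $\ind[a]$ in $T^R$ has local character with the smallest possible bound $\kappa(D)=(|D|+\aleph_0)^+$, and hence also countably local character.
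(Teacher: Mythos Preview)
Your proposal is correct and matches the paper's own proof essentially verbatim: invoke Proposition~\ref{p-I-small}, then use Proposition~\ref{p-I-implies-a-omega} together with the hypothesis $\acl=\dcl$ and Remark~\ref{r-weaker} to transfer small local character to $\ind[a]$. The additional sanity checks and the corollary via Fact~\ref{f-countably-localchar}(i) are fine but not needed.
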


\begin{proof}  By Proposition \ref{p-I-small}, $\ind[a\omega] \ \wedge \ind[a\BB] \ \ $ has small local character.
By Remark \ref{r-weaker}, Proposition \ref{p-I-implies-a-omega}, and the hypothesis that $T$ has $\acl=\dcl$,
it follows that $\ind[a]$ in $T^R$ has small local character.
 \ \ \end{proof}

Here is a summary of our results about algebraic independence in $T^R$:
For any $T$, algebraic independence in $T^R$ does not satisfy finite character and does not satisfy base monotonicity.
If $T$ has $\acl=\dcl$, then algebraic independence in $T^R$ satisfies all the axioms
for a strict countable independence relation except base monotonicity, and also satisfies finite character and small local character.

\section{Pointwise Algebraic Independence}

In the preceding sections we obtained results about the algebraic independence relation $\ind[a]$ in $T^R$ under the assumption that the underlying first
order theory $T$ has $\acl=\dcl$.  In the general case where $T$ is not assumed to have $\acl=\dcl$,
the pointwise algebraic independence relation $\ind[a\omega] \ \ $ may be an attractive alternative to the algebraic independence
relation $\ind[a]$ in $T^R$.  In this section we investigate the  properties of $\ind[a\omega] \ \ $ in $T^R$ when the underlying
first order theory $T$ is an arbitrary complete theory with models of cardinality $>1$.  We first recall some results from [AGK2].

\begin{fact}  \label{f-pointwise-axioms}  (Special case of Proposition 7.1.4 in [AGK2].)
In $T^R$, $\ind[a\omega] \ \ $ satisfies symmetry and all the axioms for a countable independence relation except perhaps base monotonicity and extension.
Also, if $\ind[a]$ in $T$ has base monotonicity, then so does $\ind[a\omega] \ $ in $T^R$.
\end{fact}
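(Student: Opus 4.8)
The plan is to obtain this as the specialization to $I=\ind[a]$ of a general transfer principle (Proposition 7.1.4 of [AGK2]): every basic axiom satisfied by a monotone ternary relation $I$ over $\cu M$ is inherited by the pointwise relation $\ind[I\omega]$ over $\cu N$, the only exceptions being that base monotonicity transfers exactly when $I$ has it and that extension may be lost. By the first order version of Proposition~\ref{p-alg-indep} ([Ad2, Proposition 1.5]), $\ind[a]$ in $\cu M$ satisfies invariance, monotonicity, transitivity, normality, finite character, small local character, symmetry and full existence, hence extension, and may fail only base monotonicity; so it remains to carry out each transfer. Throughout, the bridge between the two structures is Fact~\ref{f-pointwise-a}: for countable $A,B,C$ we have $A\ind[a\omega]_CB$ iff $A(\omega)\ind[a]_{C(\omega)}B(\omega)$ for almost every $\omega$, and the general case reduces to the countable one via the defining clause $(\forall^cA'\subseteq A)(\forall^cB'\subseteq B)(\forall^cC'\subseteq C)(\exists^cD\in[C',C])\,A'\ind[a]_DB'$ a.e.

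Several axioms then come cheaply. Monotonicity and countable character are immediate from the definition of $\ind[a\omega]$, which only refers to the countable subsets of $A,B,C$. Symmetry holds because that defining clause is visibly symmetric in $A$ and $B$, so it suffices that $A'\ind[a]_DB'$ a.e.\ iff $B'\ind[a]_DA'$ a.e., i.e.\ the symmetry of $\ind[a]$ in $\cu M$. Invariance follows from invariance of $\ind[a]$ in $\cu M$ together with the fact, established in [AGK2] en route to Fact~\ref{f-pointwise-a}, that each event $\l A_0\ind[a]_DB_0\rr$ with $A_0,B_0,D$ countable lies in the underlying $\sigma$-algebra and has a measure determined by the $\cu N$-type of the relevant tuple. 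Normality reduces, after replacing a countable subset of $AC$ by $A'C'$ for suitable countable $A'\subseteq A$, $C'\subseteq C$, to pointwise normality of $\ind[a]$ in $\cu M$. Finally, local character is already in hand: by Proposition~\ref{p-a-omega-small}, $\ind[a\omega]$ has small local character, and by Fact~\ref{f-countably-localchar}(i) this gives local character with the smallest bound.

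The real difficulty, and the step I expect to be the main obstacle, is transitivity. Given $C\in[D,B]$ with $B\ind[a\omega]_CA$ and $C\ind[a\omega]_DA$, and countable $B'\subseteq B$, $A'\subseteq A$, $D'\subseteq D$, I must produce a countable $E\in[D',D]$ with $B'\ind[a]_EA'$ a.e. Applying the two hypotheses to the given data does not line up, because the countable base supplied by $B\ind[a\omega]_CA$ lives in $C$ while that supplied by $C\ind[a\omega]_DA$ lives in $D$, and the second needs, as its first-coordinate input, whatever countable piece of $C$ the first produced; there is no finite way to close this loop. The remedy is a countable back-and-forth: build countable chains $D'=E_0\subseteq E_1\subseteq\cdots$ in $D$ and $F_0\subseteq F_1\subseteq\cdots$ in $C$ with $E_n\subseteq F_n$, alternately applying $B\ind[a\omega]_CA$ (with base $E_n$, resp.\ $F_{n-1}\cup E_n$) to make $B'\ind[a]_{F_n}A'$ a.e.\ and applying $C\ind[a\omega]_DA$ (with first coordinate $F_{n-1}$, base $E_{n-1}$) to make $F_{n-1}\ind[a]_{E_n}A'$ a.e. Setting $F^\infty=\bigcup_nF_n\subseteq C$ and $E=\bigcup_nE_n\in[D',D]$ with $E\subseteq F^\infty$, finite character of $\acl$ shows a monotone union of bases preserves algebraic independence, so a.e.\ we get $B'\ind[a]_{F^\infty}A'$ and $F^\infty\ind[a]_EA'$; and since $\ind[a]$ in $\cu M$ satisfies the strengthened transitivity ``if $Y\subseteq F$, $B'\ind[a]_FA'$ and $F\ind[a]_YA'$, then $B'\ind[a]_YA'$'' (a one-line check with $\acl$, not needing $F\subseteq B'$), this yields $B'\ind[a]_EA'$ a.e., hence $B\ind[a\omega]_DA$. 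This chain argument is essentially Lemma 4.1.4 of [AGK2]; alternatively one invokes Proposition 7.1.4 of [AGK2] directly.

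For the last assertion, assume $\ind[a]$ in $T$ has base monotonicity. Let $C\in[D,B]$ and $A\ind[a\omega]_DB$; given countable $A'\subseteq A$, $B'\subseteq B$, $C'\subseteq C$, pick a countable $D'\subseteq D$ and apply $A\ind[a\omega]_DB$ with $B'\cup C'$ in the second slot to obtain a countable $E'\in[D',D]$ with $A'\ind[a]_{E'}(B'\cup C')$ a.e. On the corresponding measure-one set, since $E'(\omega)\subseteq E'(\omega)\cup C'(\omega)\subseteq(B'\cup C')(\omega)$, base monotonicity of $\ind[a]$ in $\cu M$ gives $A'(\omega)\ind[a]_{E'(\omega)\cup C'(\omega)}(B'\cup C')(\omega)$, and then monotonicity gives $A'(\omega)\ind[a]_{E'(\omega)\cup C'(\omega)}B'(\omega)$; so $E:=E'\cup C'$ is a countable member of $[C',C]$ with $A'\ind[a]_EB'$ a.e. This completes the plan. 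For orientation, the converse direction of the equivalence and the failure of extension even when $\acl=\dcl$ are taken up in the remaining results; the obstruction to extension is precisely the clause $\exists^cD\in[C',C]$, which forbids enlarging $B$ while holding the base fixed.
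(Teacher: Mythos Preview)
The paper does not prove this statement; it is recorded as a Fact and attributed to Proposition~7.1.4 of [AGK2], with no argument given here. Your proposal is therefore not competing with a proof in the paper but is rather a reconstruction of the cited result, and as such it is essentially correct and follows the expected route: verify the axioms for $\ind[a]$ in $\cu M$ via [Ad2], then transfer each one to $\ind[a\omega]$ using Fact~\ref{f-pointwise-a} for countable sets and the $(\forall^c\cdots)(\exists^c D)$ clause for arbitrary small sets. Your handling of transitivity via the interleaved countable chains $E_n\subseteq F_n$ and the union argument is the right idea and matches what one expects from the general lemma in [AGK2]; the ``strengthened transitivity'' you invoke for $\ind[a]$ (not requiring $F^\infty\subseteq B'$) is indeed a one-line $\acl$ computation. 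The base-monotonicity transfer is also correct.

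Two minor remarks. First, your citation ``this chain argument is essentially Lemma~4.1.4 of [AGK2]'' points to the wrong lemma: in this paper Lemma~4.1.4 of [AGK2] is Fact~\ref{f-pointwise-a}, the countable characterization of $\ind[I\omega]$, not the transitivity argument; the latter is part of Proposition~7.1.4 itself. Second, your closing orientation paragraph is muddled: the Fact asserts only an implication for base monotonicity, not an equivalence, and the paper does not establish that extension fails for $\ind[a\omega]$ (it proves extension for countable sets in Theorem~\ref{t-aa-omega-existence} and leaves the general case as open questions). These are issues of framing rather than mathematics; the core argument is sound.
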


\begin{fact}  \label{f-pointwise-small}  (Corollary 7.2.5 in [AGK2].) In $T^R$, $\ind[a\omega] \ \ $ has small local character.
\end{fact}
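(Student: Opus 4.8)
The plan is to recognize that this Fact is exactly the statement of Proposition~\ref{p-a-omega-small} above, so the intended argument is the one already recorded there (with a reference to [AGK2, Corollary~7.2.5] as an alternative). Concretely, I would proceed in three short steps. First, invoke Lemma~\ref{l-ind[c]-implies-ind[a]}: in $T$ the covering relation implies algebraic independence, $\ind[c]\Rightarrow\ind[a]$. Applying this pointwise (i.e. at each $\omega\in\Omega$ to the sections $A(\omega),B(\omega),C(\omega)$) yields, for all countable $A,B,C\subseteq\cu K$, the inclusion of events $\l A\ind[c]_C B\rr\sqsubseteq\l A\ind[a]_C B\rr$. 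Second, feed this into the definition of pointwise $I$-independence together with Fact~\ref{f-pointwise-a} to conclude $\ind[c\omega]\ \Rightarrow\ \ind[a\omega]$ over $\cu N$. Third, since $\ind[c\omega]$ has small local character by Fact~\ref{f-covering-omega-small}, Remark~\ref{r-weaker} transfers small local character to the weaker relation $\ind[a\omega]$.

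The only point that needs a little care is the second step: the pointwise almost-sure inclusion $\l A\ind[c]_C B\rr\sqsubseteq\l A\ind[a]_C B\rr$ is stated for \emph{countable} parameter sets, whereas $\ind[a\omega]$ and $\ind[c\omega]$ are defined for arbitrary small sets. But this is precisely why the definition of $\ind[I\omega]$ quantifies only over countable subsets $A'\subseteq A$, $B'\subseteq B$, $C'\subseteq C$ and over countable $D\in[C',C]$: the implication for the countable witnesses immediately gives the implication for the full sets, using Fact~\ref{f-pointwise-a} to rewrite each clause $A'\ind[a]_D B'$ (resp.\ $A'\ind[c]_D B'$) as the almost-sure event $\mu(\l A'\ind[a]_D B'\rr)=1$ (resp.\ for $\ind[c]$). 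No genuine obstacle arises in this passage.

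Thus the substance of the result is entirely contained in Fact~\ref{f-covering-omega-small} (small local character of $\ind[c\omega]$) together with Lemma~\ref{l-ind[c]-implies-ind[a]}. If one insisted on not treating Fact~\ref{f-covering-omega-small} as a black box, the hard part would be its proof in [AGK2, Lemma~7.2.4]: given countable $A$, a small set $B$, and a countable $C_0\subseteq B$, one must build a countable $C\in[C_0,B]$ with $\mu(\l A(\omega)\ \text{is covered by}\ C(\omega)\ \text{in}\ B(\omega)\rr)=1$. This requires an $\omega_1$-length approximation inside the randomization — closing off, at each countable stage, the finitely many covering requirements coming from the countably many $L$-formulas and tuples, while keeping the measure of the "failure'' set at $0$ in the limit — and that measure-theoretic reflection/selection argument is the only nontrivial ingredient. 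For the present Fact, however, it suffices to cite it.
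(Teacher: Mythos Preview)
Your proposal is correct and matches the paper's own argument: the paper does not prove Fact~\ref{f-pointwise-small} separately (it merely cites [AGK2]), but the identical statement is proved earlier as Proposition~\ref{p-a-omega-small} using exactly the chain you describe --- Lemma~\ref{l-ind[c]-implies-ind[a]} gives $\ind[c]\Rightarrow\ind[a]$ pointwise, hence $\ind[c\omega]\Rightarrow\ind[a\omega]$, and then Fact~\ref{f-covering-omega-small} together with Remark~\ref{r-weaker} transfers small local character. Your remark about why the passage from countable to small sets is unproblematic is also on point.
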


\begin{df} A ternary relation $\ind[I]$ has the \emph{countable union property} if whenever
$A, B, C$ are countable, $C=\bigcup_n C_n$, and $C_n\subseteq C_{n+1}$ and $A\ind[I]_{C_n} B$
for each $n$, we have $A\ind[I]_C B$.
\end{df}

\begin{fact}  \label{f-omega-union}  (Special case of Proposition 7.1.6 in [AGK2].)
If the relation $\ind[a]$ in $T$ has monotonicity, finite character, and the countable union property, then the relation $\ind[a\omega] \ \ $ in $T^R$
has finite character.
\end{fact}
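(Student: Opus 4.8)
The plan is to derive finite character of $\ind[a\omega]$ in $T^R$ from the three hypothesised properties of $\ind[a]$ in $\cu M$, argued pointwise over $\Omega$, with the one delicate point being that the countable base sets produced along the way must be chosen in an increasing chain. Suppose $A_0\ind[a\omega]_C B$ for every finite $A_0\subseteq A$; we must show $A\ind[a\omega]_C B$. Unwinding the definition of pointwise algebraic independence, fix countable sets $A'\subseteq A$, $B'\subseteq B$, $C'\subseteq C$; it suffices to produce a countable $D\in[C',C]$ with $A'\ind[a\omega]_D B'$, equivalently, by Fact \ref{f-pointwise-a}, with $\mu(\l A'\ind[a]_D B'\rr)=1$. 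Enumerate $A'=\{\bo a_0,\bo a_1,\ldots\}$ and put $A'_n=\{\bo a_0,\ldots,\bo a_{n-1}\}$, so each $A'_n$ is a finite subset of $A$ and $A'=\bigcup_n A'_n$. I would construct countable sets $C'=D_0\subseteq D_1\subseteq D_2\subseteq\cdots\subseteq C$ by recursion: given $D_{n-1}$, apply the hypothesis $A'_n\ind[a\omega]_C B$ — unwinding its definition with the countable parameters $A'_n$, $B'$, $D_{n-1}$ — to get a countable $D_n\in[D_{n-1},C]$ with $A'_n\ind[a\omega]_{D_n} B'$, that is $\mu(\l A'_n\ind[a]_{D_n}B'\rr)=1$. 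Then $D:=\bigcup_n D_n$ is a countable subset of $C$ containing $C'$, so $D\in[C',C]$.

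The remaining task is to verify $\mu(\l A'\ind[a]_D B'\rr)=1$. Set $E=\bigcap_n\l A'_n\ind[a]_{D_n}B'\rr$, a measure-one event, and I would show $E\subseteq\l A'\ind[a]_D B'\rr$; since $\mu(E)=1$ and $(\Omega,\cu F,\mu)$ is complete, this gives $\mu(\l A'\ind[a]_D B'\rr)=1$. Fix $\omega\in E$. For each $k\ge n$, monotonicity of $\ind[a]$ in $\cu M$, applied to $A'_n(\omega)\subseteq A'_k(\omega)$ and $A'_k(\omega)\ind[a]_{D_k(\omega)}B'(\omega)$, yields $A'_n(\omega)\ind[a]_{D_k(\omega)}B'(\omega)$. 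Since the sets $D_k(\omega)$ $(k\ge n)$ increase with union $D(\omega)$, the countable union property of $\ind[a]$ gives $A'_n(\omega)\ind[a]_{D(\omega)}B'(\omega)$ for every $n$. Finally, every finite subset of $A'(\omega)=\bigcup_n A'_n(\omega)$ lies in some $A'_n(\omega)$, so monotonicity followed by finite character of $\ind[a]$ gives $A'(\omega)\ind[a]_{D(\omega)}B'(\omega)$, i.e.\ $\omega\in\l A'\ind[a]_D B'\rr$.

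I expect the only real subtlety to be the recursive, chained selection of the bases $D_n$. Choosing each $D_n$ independently from the hypothesis and then taking $D=\bigcup_n D_n$ would leave the pointwise argument stuck: one would have $A'_n(\omega)\ind[a]_{D_n(\omega)}B'(\omega)$ but would need it at the larger base $D(\omega)$, and enlarging the base requires base monotonicity, which is exactly the property $\ind[a]$ may fail to have. Arranging $D_n\in[D_{n-1},C]$ makes the $D_k(\omega)$ an increasing chain with union $D(\omega)$, which is precisely the setup the countable union property handles. The only other points to attend to are the measurability of the events in brackets above and the degenerate cases ($A'$ finite or empty), both routine — the former handled, as elsewhere in [AGK2], by completeness of the probability space.
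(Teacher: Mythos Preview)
The paper does not actually prove this statement; it is quoted as a special case of Proposition~7.1.6 in [AGK2], so there is no in-paper argument to compare against. That said, your proof is correct, and the chained construction of the bases $D_n$ together with the pointwise appeal to monotonicity, the countable union property, and finite character is exactly the natural route and almost certainly what the cited proposition in [AGK2] does (there for a general $\ind[I]$ rather than $\ind[a]$). Your diagnosis of the one genuine subtlety---that the $D_n$ must be chosen increasing so that the countable union property, rather than base monotonicity, carries the weight---is on point.
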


\begin{thm}  \label{t-finite-char}
In $T^R$, the relation $\ind[a\omega] \ \ $ has finite character.
\end{thm}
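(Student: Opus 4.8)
The plan is to deduce this from Fact \ref{f-omega-union}: it suffices to check that the algebraic independence relation $\ind[a]$ in the first order theory $T$ has monotonicity, finite character, and the countable union property. The first two are classical properties of algebraic independence in first order logic (they are the first-order instances of the reasoning behind Proposition \ref{p-alg-indep}, and are recorded in [Ad2, Proposition 1.5]), but since they feed into Fact \ref{f-omega-union} I would include the one-line verifications. For monotonicity, if $A\ind[a]_C B$ and $A'\subseteq A$, $B'\subseteq B$, then $\acl^{\cu M}(A'C)\cap\acl^{\cu M}(B'C)\subseteq\acl^{\cu M}(AC)\cap\acl^{\cu M}(BC)=\acl^{\cu M}(C)$, and the reverse inclusion is automatic, so $A'\ind[a]_C B'$. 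For finite character, suppose $A_0\ind[a]_C B$ for every finite $A_0\subseteq A$ and let $e\in\acl^{\cu M}(AC)\cap\acl^{\cu M}(BC)$; by finite character of algebraic closure in first order logic, $e\in\acl^{\cu M}(A_0C)$ for some finite $A_0\subseteq A$, whence $e\in\acl^{\cu M}(A_0C)\cap\acl^{\cu M}(BC)=\acl^{\cu M}(C)$, giving $A\ind[a]_C B$.

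The substantive point is the countable union property. Suppose $A,B,C$ are countable, $C=\bigcup_n C_n$ with $C_n\subseteq C_{n+1}$, and $A\ind[a]_{C_n} B$ for every $n$; I want $\acl^{\cu M}(AC)\cap\acl^{\cu M}(BC)=\acl^{\cu M}(C)$. Let $e$ lie in the left-hand side. By finite character of algebraic closure there are finite sets $A_0\subseteq A$, $B_0\subseteq B$, and $C'\subseteq C$ with $e\in\acl^{\cu M}(A_0C')$ and $e\in\acl^{\cu M}(B_0C')$ (taking $C'$ to absorb the finitely many parameters from $C$ needed on either side). Since the union $C=\bigcup_n C_n$ is increasing and $C'$ is finite, there is an $n$ with $C'\subseteq C_n$, so $e\in\acl^{\cu M}(AC_n)\cap\acl^{\cu M}(BC_n)$. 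Applying $A\ind[a]_{C_n} B$ yields $e\in\acl^{\cu M}(C_n)\subseteq\acl^{\cu M}(C)$; the inclusion $\acl^{\cu M}(C)\subseteq\acl^{\cu M}(AC)\cap\acl^{\cu M}(BC)$ is trivial. Hence $\ind[a]$ in $T$ has the countable union property, and Fact \ref{f-omega-union} then gives finite character of $\ind[a\omega]$ in $T^R$.

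I do not expect any serious obstacle. The only step requiring a moment of care is the countable union property, where one must exploit that $\bigcup_n C_n$ is a \emph{directed} (increasing) union so that a single finite subset of $C$ is already contained in some $C_n$; this is precisely the place where finite character of $\acl^{\cu M}$ is combined with the chain hypothesis. Everything else is routine unwinding of the definition of $\ind[a]$ together with standard properties of first order algebraic closure.
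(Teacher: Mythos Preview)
Your proposal is correct and follows essentially the same route as the paper: reduce to Fact~\ref{f-omega-union} by verifying that $\ind[a]$ in $T$ has monotonicity, finite character, and the countable union property, the last being checked via finite character of $\acl^{\cu M}$ together with the chain hypothesis. The paper's proof is slightly terser (it simply cites monotonicity and finite character as well-known and compresses the countable union argument), but the content is identical.
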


\begin{proof}  It is well-known that $\ind[a]$ in $T$ has monotonicity and finite character.  We show that $\ind[a]$ in $T$ has the countable union property.
Suppose $A, B, C$ are countable, $C=\bigcup_n C_n$, and $C_n\subseteq C_{n+1}$ and
$A\ind[I]_{C_n} B$ for each $n$.  Let $d\in\acl^{\cu M}(AC)\cap\acl^{\cu M}(BC)$.  Then for some $n$ we have $d\in\acl^{\cu M}(AC_n)\cap\acl^{\cu M}(BC_n)$.
 Since $A\ind[a]_{C_n} B$, $d\in\acl^{\cu M}(C_n)$, so $d\in\acl^{\cu M}(C)$.  Therefore $A\ind[a]_C B$, and hence $\ind[a]$ has the countable union property.
So by Fact \ref{f-omega-union}, $\ind[a\omega] \ \ $ has finite character.
\end{proof}

 $\l A\ind[I]_C B\rr\in\cu F$ for all countable $A,B,C\subseteq\cu K$.

\begin{lemma}  \label{l-a-measurable}  For all countable sets $A, B, C\subseteq\cu K$, the set $\l A\ind[a]_C B\rr$ belongs to $\cu F$, and thus is
measurable in the underlying probability space $(\Omega,\cu F,\mu)$.
\end{lemma}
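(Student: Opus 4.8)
The plan is to write the event $\l A\ind[a]_C B\rr$ explicitly as a set obtained from events of the form $\l\theta(\vec{\bo a})\rr$ (with $\theta$ a first-order $L$-formula and $\vec{\bo a}$ a tuple from $\cu K$) by countably many unions and intersections; each such event lies in $\cu F$ by Fact~\ref{f-neat}(3), and $\cu F$ is a $\sigma$-algebra, so the conclusion follows. First I would unwind the definition: since $\acl^{\cu M}(C(\omega))\subseteq\acl^{\cu M}(A(\omega)C(\omega))\cap\acl^{\cu M}(B(\omega)C(\omega))$ for every $\omega$, the event $\l A\ind[a]_C B\rr$ is the set of $\omega$ with $\acl^{\cu M}(A(\omega)C(\omega))\cap\acl^{\cu M}(B(\omega)C(\omega))\subseteq\acl^{\cu M}(C(\omega))$. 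Because $L$, $A$, $B$, $C$ are countable, I fix enumerations $(\varphi_i,\vec{\bo a}_i,\vec{\bo c}_i)_{i\in\BN}$, $(\psi_j,\vec{\bo b}_j,\vec{\bo c}'_j)_{j\in\BN}$, $(\chi_k,\vec{\bo d}_k)_{k\in\BN}$ of all triples consisting of an algebraical $L$-formula (with its parameter variables partitioned into an ``$A$-block'' and a ``$C$-block'', etc.) together with finite tuples of matching lengths drawn from $A$ and $C$ (respectively $B$ and $C$; respectively $C$). For each $\omega$ let $S_i(\omega),S'_j(\omega),T_k(\omega)\subseteq M$ be the solution sets in $\cu M$ of $\varphi_i,\psi_j,\chi_k$ with the corresponding parameters substituted at $\omega$; since $\cu M\models T$ and the formulas are algebraical, these solution sets are finite, with a bound depending only on the formula. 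By the definition of algebraic closure, $\acl^{\cu M}(A(\omega)C(\omega))=\bigcup_i S_i(\omega)$, and likewise for the other two closures; hence $\omega\in\l A\ind[a]_C B\rr$ if and only if $S_i(\omega)\cap S'_j(\omega)\subseteq\bigcup_k T_k(\omega)$ for all $i,j$.

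The crucial step is to replace the infinite union $\bigcup_k T_k(\omega)$ by a finite one. Since $S_i(\omega)$ is finite, $S_i(\omega)\cap S'_j(\omega)\subseteq\bigcup_k T_k(\omega)$ holds if and only if there is a \emph{finite} $F\subseteq\BN$ with $S_i(\omega)\cap S'_j(\omega)\subseteq\bigcup_{k\in F}T_k(\omega)$. Letting $\Phi_{i,j,F}$ be the first-order $L$-formula
$$(\forall u)\Bigl[\bigl(\varphi_i(u,\vec x,\vec z)\wedge\psi_j(u,\vec y,\vec w)\bigr)\longrightarrow\bigvee_{k\in F}\chi_k(u,\vec t_k)\Bigr]$$
(all displayed variable tuples taken pairwise disjoint, and $\bigvee_{\emptyset}:=\bot$), that last containment is exactly the statement $\omega\in\l\Phi_{i,j,F}(\vec{\bo a}_i,\vec{\bo c}_i,\vec{\bo b}_j,\vec{\bo c}'_j,(\vec{\bo d}_k)_{k\in F})\rr$. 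Therefore
$$\l A\ind[a]_C B\rr=\bigcap_{i\in\BN}\bigcap_{j\in\BN}\ \bigcup_{F\subseteq\BN\ \mathrm{finite}}\ \l\Phi_{i,j,F}(\vec{\bo a}_i,\vec{\bo c}_i,\vec{\bo b}_j,\vec{\bo c}'_j,(\vec{\bo d}_k)_{k\in F})\rr ,$$
and each displayed event lies in $\cu F$ by Fact~\ref{f-neat}(3). As there are only countably many pairs $(i,j)$ and countably many finite $F\subseteq\BN$, and $\cu F$ is a $\sigma$-algebra, it follows that $\l A\ind[a]_C B\rr\in\cu F$; being a member of $\cu F$, it is measurable in $(\Omega,\cu F,\mu)$.

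I expect the only genuine content to be the finite-union reduction in the second paragraph: it is what keeps everything inside ordinary first-order $L$-formulas and avoids an infinitary disjunction under a universal quantifier, and it uses exactly that the solution set of an algebraical formula is finite, with a uniform bound, in the model $\cu M$ of $T$. The rest is bookkeeping with the countable enumerations and the definition of algebraic closure. Finally I would note that replacing any $\bo a\in\cu K$ by a different representative in $\cu J$ alters each $\l\Phi_{i,j,F}(\ldots)\rr$, and hence $\l A\ind[a]_C B\rr$, only on a null set, which does not affect membership in $\cu F$.
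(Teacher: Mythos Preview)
Your proof is correct and follows essentially the same approach as the paper: both express $\l A\ind[a]_C B\rr$ as a countable Boolean combination of events of the form $\l\forall u[\varphi\wedge\psi\rightarrow\chi]\rr$ with algebraical $\varphi,\psi,\chi$ and parameters from $AC,BC,C$ respectively, and conclude by Fact~\ref{f-neat}(3) and closure of $\cu F$ under countable operations. The only cosmetic difference is that the paper writes a single $\chi_k$ in place of your finite disjunction $\bigvee_{k\in F}\chi_k$, which it can do because a finite disjunction of algebraical formulas is again algebraical and hence already occurs in the enumeration; your version simply makes this reduction explicit.
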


\begin{proof}  Let $\{\varphi_i(u,\vec x)\mid i\in\BN\}$, $\{\psi_j(u,\vec y)\mid j\in\BN\}$, and $\{\chi_k(u,\vec z)\mid k\in\BN\}$ enumerate
all algebraical formulas over the indicated variables.  Then the set $\l A\ind[a]_C B\rr$ is equal to
$$ \bigcap_{i\in\BN}\bigcap_{\vec a\subseteq AC} \bigcap_{j\in\BN}\bigcap_{\vec b\subseteq BC}\bigcup_{k\in\BN}\bigcup_{\vec c\subseteq C}
\l \forall u[\varphi_i(u,\vec {\bo a})\wedge\psi_j(u,\vec{\bo b})\Rightarrow \chi_k(u,\vec{\bo c})]\rr.$$
\end{proof}

\begin{thm}  \label{t-aa-omega-existence}
The  relation $\ind[a\omega] \ \ $ over $\cu N$ satisfies extension and  full existence for all countable sets $A, B, \widehat{B}, C$..
\end{thm}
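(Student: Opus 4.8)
The plan is to deduce extension from full existence, and to prove full existence for countable sets by the method of Theorem~\ref{t-fe}: realize a suitable type over $\cu N_{ABC}$ whose finite fragments are satisfied by pointwise use of full existence in $\cu M$ and gluing via Condition (5) of Fact~\ref{f-neat}, now tracking pointwise algebraic independence of the $\cu M$-reducts instead of global $\dcl_\BB$-equality, so that the hypothesis $\acl=\dcl$ is not needed. For the reduction: $\ind[a\omega]$ has invariance, monotonicity, transitivity, normality and symmetry by Fact~\ref{f-pointwise-axioms}, so the argument of Fact~\ref{f-fe-vs-ext}(i) applies; its only appeal to full existence produces a conjugate $\widehat B'\equiv_{BC}\widehat B$ with $\widehat B'\ind[a\omega]_{BC}A$, which for countable $A,B,\widehat B,C$ is an instance of full existence for countable sets. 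So it remains to show: for countable $A,B,C$ there is a countable $A'\equiv_C A$ with $A'\ind[a\omega]_C B$. By Fact~\ref{f-pointwise-a} and Lemma~\ref{l-a-measurable} this amounts to producing $A'\equiv_C A$ with $\mu\l A'\ind[a]_C B\rr=1$, i.e.\ with $\acl^{\cu M}(A'(\omega)C(\omega))\cap\acl^{\cu M}(B(\omega)C(\omega))=\acl^{\cu M}(C(\omega))$ for a.e.\ $\omega$.

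I would build $A'=\{\bo a'_n\}$ as a realization of a type $\Gamma$ over $\cu N_{ABC}$ in variables $\bo a'_n$. First, using Condition (5) of Fact~\ref{f-neat} repeatedly to enumerate, for each algebraical $L$-formula $\rho$ and tuple $\vec{\bo b}$ from $BC$, the finitely many roots of $\rho(\cdot,\vec{\bo b})$, list elements $\bo e_0,\bo e_1,\dots\in\cu K$ such that for a.e.\ $\omega$ the set $\{\bo e_m(\omega):m\in\BN\}$ contains $\acl^{\cu M}(B(\omega)C(\omega))$; for each $m$ let $\sa E_m$ be the event $\{\omega:\bo e_m(\omega)\in\acl^{\cu M}(B(\omega)C(\omega))\setminus\acl^{\cu M}(C(\omega))\}$, which lies in $\cu F$ (a countable Boolean combination of events $\l\rho(\bo e_m,\vec{\bo b})\rr$ and $\l\chi(\bo e_m,\vec{\bo c})\rr$), and form its characteristic function $\bo 1_{\sa E_m}\in\cu K$ via Fact~\ref{f-glue}, using the pair $\bo 0,\bo 1$ of Lemma~\ref{l-char-function}. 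Let $\Gamma$ consist of the conditions $\l\theta(\vec{\bo a}',\vec{\bo c})\rr\doteq\l\theta(\vec{\bo a},\vec{\bo c})\rr$ for all $L$-formulas $\theta$ and tuples $\vec{\bo a}\subseteq A$, $\vec{\bo c}\subseteq C$ (with $\vec{\bo a}'$ the matching variable tuple), together with the conditions $\sa E_m\sqcap\l\varphi(\bo e_m,\vec{\bo a}',\vec{\bo c})\rr\doteq\bot$ for every algebraical $\varphi$, every $m$, and every filling of the non-leading arguments of $\varphi$ by variables $\bo a'_n$ and constants from $C$. The first family forces $A'\equiv_C A$ exactly as in Theorem~\ref{t-fe}. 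For the second: if $A'$ realizes $\Gamma$ but on a non-null set of $\omega$ there is $e\in\bigl(\acl^{\cu M}(A'(\omega)C(\omega))\cap\acl^{\cu M}(B(\omega)C(\omega))\bigr)\setminus\acl^{\cu M}(C(\omega))$, then for a.e.\ such $\omega$ we have $e=\bo e_m(\omega)$ for some $m$, whence $\omega\in\sa E_m$, while $e$ being algebraic over $A'(\omega)C(\omega)$ gives $\varphi(\bo e_m(\omega),\vec{\bo a}'(\omega),\vec{\bo c}(\omega))$ for some algebraical $\varphi$ and tuples, contradicting the matching condition of $\Gamma$. Hence a realization $A'$ of $\Gamma$ satisfies $\mu\l A'\ind[a]_C B\rr=1$, and by Fact~\ref{f-pointwise-a} and Lemma~\ref{l-a-measurable}, $A'\ind[a\omega]_C B$.

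Finite satisfiability of $\Gamma$ is the analogue of Claim~1 in the proof of Theorem~\ref{t-fe}. Given a finite $\Gamma_0$ involving variables $\bo a'_{n_1},\dots,\bo a'_{n_r}$, finitely many parameters, finitely many formulas $\theta$, and finitely many triples $(\varphi,\bo e_m,\sa E_m)$, write $A_0=\{\bo a_{n_1},\dots,\bo a_{n_r}\}$; by full existence of $\ind[a]$ in $\cu M$, for each $\omega$ pick $G(\omega)=(g_{n_1}(\omega),\dots,g_{n_r}(\omega))$ with $\tp^{\cu M}(G(\omega)/C(\omega))=\tp^{\cu M}(A_0(\omega)/C(\omega))$ and $\acl^{\cu M}(G(\omega)C(\omega))\cap\acl^{\cu M}(B(\omega)C(\omega))=\acl^{\cu M}(C(\omega))$. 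Then whenever $\omega\in\sa E_m$ we get $\bo e_m(\omega)\notin\acl^{\cu M}(G(\omega)C(\omega))$, so no $\varphi$ from $\Gamma_0$ witnesses algebraicity of $\bo e_m(\omega)$ over $G(\omega)$ with constants from $C$. Consequently the first-order formula
$$\Psi(\vec u):=\bigwedge_{\theta}\bigl(\theta(\vec u,\vec{\bo c})\leftrightarrow\theta(\vec{\bo a},\vec{\bo c})\bigr)\wedge\bigwedge_{(\varphi,\bo e_m)}\bigl(\bo 1_{\sa E_m}=\bo 1\rightarrow\neg\varphi(\bo e_m,\vec u,\vec{\bo c})\bigr),$$
with parameters among the constants of $\cu N_{ABC}$, the $\bo e_m$'s and the $\bo 1_{\sa E_m}$'s, satisfies $\mu\l\exists\vec u\,\Psi(\vec u)\rr=1$, witnessed by $G$. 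By Condition (5) of Fact~\ref{f-neat} applied $r$ times there is $\vec{\bo g}\in\cu K^{r}$ with $\l\Psi(\vec{\bo g})\rr=\l\exists\vec u\,\Psi(\vec u)\rr$, and $\vec{\bo g}$ then satisfies $\Gamma_0$. Thus $\Gamma$ is finitely satisfiable, hence realized by saturation of $\cu N$, and its realization is the required $A'$.

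The delicate point — and the reason the statement is not a triviality — is that $\mu\l A'\ind[a]_C B\rr=1$ cannot be turned directly into a set of closed conditions on the $\bo a'_n$, because ``$u\in\acl^{\cu M}(C(\omega))$'' unfolds to an infinite disjunction. The remedy, as in the first-order proof that $\ind[a]$ has full existence, is to forbid instead that any pointwise-algebraic element of $BC$ lying outside $\acl^{\cu M}(C(\omega))$ become algebraic over $A'C$, which \emph{is} conjunctive; the work then goes into (i) representing all such pointwise-algebraic elements by countably many $\bo e_m\in\cu K$ with measurable defining events $\sa E_m$ — which rests on the finitary character of first-order algebraic closure together with Condition (5) of Fact~\ref{f-neat} — and (ii) the pointwise-plus-gluing verification that the finite fragments of $\Gamma$ stay satisfiable. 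I expect step (i), and in particular the bookkeeping that makes $\{\bo e_m(\omega)\}$ actually exhaust $\acl^{\cu M}(B(\omega)C(\omega))$ a.e.\ while keeping each $\sa E_m$ measurable, to be the main obstacle to carry out in full.
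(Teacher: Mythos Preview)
Your argument is correct, and it follows a genuinely different route from the paper's proof. Both proofs reduce extension to full existence for countable sets via Fact~\ref{f-fe-vs-ext}(i) and Fact~\ref{f-pointwise-axioms}, and both realize a countable type by saturation after checking finite satisfiability via pointwise full existence of $\ind[a]$ in $\cu M$ together with Condition~(5) of Fact~\ref{f-neat}. The difference lies in how the condition $\mu\l A'\ind[a]_C B\rr=1$ is encoded as a conjunctive type.

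You work on the \emph{element} side: you produce countably many $\bo e_m\in\cu K$ whose pointwise values exhaust $\acl^{\cu M}(B(\omega)C(\omega))$ almost everywhere, isolate the measurable events $\sa E_m$ on which $\bo e_m(\omega)$ is genuinely outside $\acl^{\cu M}(C(\omega))$, and then forbid each $\bo e_m$ from becoming algebraic over $A'C$ on $\sa E_m$. This mirrors the classical first-order proof of full existence for $\ind[a]$ and keeps the bookkeeping flat: one family of events $\{\sa E_m\}$, one family of auxiliary elements $\{\bo e_m\}$. The paper instead works on the \emph{formula} side: it enumerates algebraical formulas $\varphi_i,\psi_i,\chi_j$, and for each finite sequence $s\in\BN^{<\BN}$ defines an event $\sa E_s$ (the set of $\omega$ for which some $A'_0\equiv_{C(\omega)}A(\omega)$ satisfies the implication $\forall u[\varphi_i\wedge\psi_i\to\chi_{s(i)}]$ for all $i<|s|$), shows these are measurable, refines them to a tree of disjoint events $\sa F_s$, and then imposes the conditions $\sa F_s\sqsubseteq\l\forall u[\varphi_i(u,A',C)\wedge\psi_i(u,B,C)\to\chi_{s(i)}(u,C)]\rr$. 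Your approach avoids the tree refinement at the cost of introducing auxiliary random elements and their characteristic functions; the paper's avoids auxiliary elements at the cost of the $\BN^{<\BN}$-indexed partition. Both measurability checks are routine, and the step you flag as the ``main obstacle''---that the $\bo e_m$ really do cover $\acl^{\cu M}(B(\omega)C(\omega))$ a.e.---is straightforward once one iterates Condition~(5) on $\rho(u,\vec{\bo b})\wedge u\notin\{\bo e_0,\ldots,\bo e_{k-1}\}$ for each algebraical $\rho$ up to its bound.
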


\begin{proof}
We first prove  full existence for countable sets.  Let $A, B, C$ be countable subsets of $\cu K$.  By Fact \ref{f-event-aB}, the relation
$\ind[a\BB] \ \ $ over $\cu N$ has full existence.  Therefore we may assume that $ A\ind[a\BB]_C \  B.$  By Fact \ref{f-acl=dcl},
$$\dcl_\BB(AC)\cap\dcl_\BB(BC)=\dcl_\BB(C).$$
Since $\ind[a]$ has full existence in $\cu M$,
for each $\omega\in\Omega$ there exists a set $A'_0\subseteq M$ such that $A'_0\equiv_{C(\omega)} A(\omega)$ and $A'_0\ind[a]_{C(\omega)} \ B(\omega)$
in $\cu M$.

Let $\varphi_i(u,A,C)$, $\psi_i(u,B,C)$, and $\chi_i(u,C)$ be enumerations of all algebraical formulas
over the indicated sets (with repetitions) such that for each pair of algebraical formulas $\varphi(u,A,C)$ and $\psi(u,B,C)$ there
exists an $i$ such that $(\varphi_i,\psi_i)=(\varphi,\psi)$.  Whenever $\omega\in\Omega$, $A'_0\subseteq\cu M$, and $A'_0\ind[a]_{C(\omega)} \ B(\omega)$ in $\cu M$,
for each $i\in \BN$ there exists $j\in\BN$ such that
\begin{equation} \label{eq-ind}
 \cu M \models \forall u [\varphi_i(u,A'_0,C(\omega))\wedge\psi_i(u,B(\omega),C(\omega))\rightarrow\chi_j(u,C(\omega))].
 \end{equation}
Let $\BN^0=\{\emptyset\}$ and $\sa E_\emptyset=\Omega$.
For each $n>0$ and $n$-tuple $s=\langle s(0),\ldots,s(n-1)\rangle$ in $\BN^n$, let $\sa E_s$ be the set of all $\omega\in\Omega$ such that
for some set $A'_0\subseteq\cu M$, $A'_0\equiv_{C(\omega)} B(\omega)$ and (\ref{eq-ind}) holds whenever $i<n$ and $j=s(i)$.

Let $L'$ be the signature formed by adding to $L$ the constant symbols
$$\{ k_a, k_b, k_c \ : \ a\in A, b\in B, c\in C\}.$$
For each $\omega\in\Omega$, $(\cu M,A(\omega),B(\omega),C(\omega))$ will be the $L'$-structure where $k_a, k_b, k_c$ are interpreted by
$a(\omega), b(\omega), c(\omega)$.  Form $L''$ by adding to $L'$ countably many additional constant symbols $\{k'_a \, : \, a\in A\}$ that will be used for elements of
a countable subset $A'_0$ of $\cu M$.

Then for each $n>0$ and $s\in \BN^n$, there is a countable set of sentences
$\Gamma_s$ of $L''$ such that for each $\omega$, $\omega\in \sa E_s$ if and only if $\Gamma_s$ is satisfiable in $(\cu M,A(\omega),B(\omega),C(\omega))$.
Since $\cu M$ is $\aleph_1$-saturated, $\Gamma_s$ is satisfiable if and only if it is finitely satisfiable in $(\cu M,A(\omega),B(\omega),C(\omega))$.
It follows that the set $\sa E_s$ belongs to the $\sigma$-algebra $\cu F$.  Moreover, since $\ind[a]$ has full existence in $\cu M$, for each $n$ and $s\in \BN^n$ we have
$$\Omega\doteq\bigcup\{\sa E_t \colon t\in \BN^n\},\quad  \sa E_s \doteq\bigcup\{ \sa E_{sk}\colon k\in\BN\},$$
where $sk$ is the $(n+1)$-tuple formed by adding $k$ to the end of $s$.  We now cut down the sets $\sa E_s$ to sets $\sa F_s\in\cu F$ such that:
\begin{itemize}
\item[(a)] $\sa F_\emptyset=\Omega$;
\item[(b)] $\sa F_s\subseteq\sa E_s$ whenever $s\in \BN^n$;
\item[(c)] $\sa F_s\cap\sa F_t=\emptyset$ whenever $s,t\in \BN^n$ and $s\ne t$;
\item[(d)] $\sa F_s\doteq\bigcup\{\sa F_{sk}\colon k\in\BN\}$ whenever $s\in\BN^n$.
\end{itemize}
This can be done as follows.  Assuming $\sa F_s$ has been defined for each $s\in\BN^n$. we let
$$ \sa F_{sk}=\sa F_s\cap(\sa E_{sk}\setminus\bigcup_{j<k} \sa F_{sj}).$$
Now let $\theta_i(A,C)$ enumerate all first order sentences with constants for the elements of $AC$.  Let $\Sigma$ and $\Delta$ be the following countable sets of sentences of $(L'')^R$:
$$\Sigma=\{\l \theta_i(A',C)\rr\doteq\l\theta_i(A,C)\rr\colon i\in\BN\}.$$
$$\Delta=\{\sa F_s\sqsubseteq \l\forall u[\varphi_i(u,A',C))\wedge\psi_i(u, B, C))\rightarrow \chi_{s(i)}(u,C))]\rr\colon s\in\BN^{<\BN}, i<|s|\}.$$
It follows from Fact \ref{f-neat} (5) and conditions (a)--(d) above that $\Sigma\cup\Delta$ is finitely satisfiable in $\cu N_{ABC}$.  Then by saturation, there is a set $A'$
that satisfies $\Sigma\cup\Delta$ in $\cu N_{ABC}$.  Since $A'$ satisfies $\Sigma$, we have $A'\equiv_C A$.
The sentences $\Delta$ guarantee that $A'\ind[a\omega]_C \ B$.

By the proof of Fact \ref{f-fe-vs-ext} (1) (see the Appendix of [Ad1]), invariance, monotonicity, transitivity, normality,  symmetry,
and full existence for all countable sets implies extension for all countable sets.  Then by the preceding paragraphs and Fact \ref{f-pointwise-axioms},
$\ind[a\omega] \ \ $ satisfies  extension for all countable sets.
\end{proof}

\begin{question}  Does $\ind[a\omega] \ \ $ satisfy extension for countable $A,B,C$ and small $\widehat{B}$?
\end{question}

\begin{question}  Does $\ind[a\omega] \ \ $ satisfy full existence and/or extension?
\end{question}

We conclude by showing that  the relations $\ind[a]$ and $\ind[a\omega] \ \ $ are incomparable except in trivial cases.

\begin{prop}  \label{p-pointwise-algebraic-vs-algebraic}

\noindent\begin{enumerate}
\item[(i)] $\ind[a\omega] \ \ $ is not anti-reflexive.
\item[(ii)] $\ind[a\omega] \ \Rightarrow \ind[a]$ always fails in $\cu N$.
\item[(iii)] $\ind[a]\Rightarrow\ind[a\omega] \ \ $ holds in $\cu N$ if and only if the models of $T$ are finite.
\end{enumerate}
\end{prop}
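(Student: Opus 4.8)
The plan is to establish all three parts by exhibiting explicit witnesses in $\cu N$, relying heavily on the fact that $\ind[a\omega] \ $ is computed fiberwise (Fact \ref{f-pointwise-a}) and on the characterizations of $\dcl, \acl$ in $T^R$ from Section 3.

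For part (i), I would observe that $\ind[a\omega] \ $ fails anti-reflexivity for a cheap reason: whenever $\bo a \in \cu K$ is such that $\mu(\l \bo a = \bo a \rr) = 1$ (i.e.\ always), then trivially $\mu(\l \bo a \in \acl^{\cu M}(\emptyset) \text{ or } \bo a \ind[a]_\emptyset \bo a \rr)$; more precisely, since $a(\omega) \in \acl^{\cu M}(a(\omega))$ pointwise, we have $\mu(\l \bo a \ind[a]_\emptyset \bo a \rr) = 1$ vacuously only if $a(\omega) \in \acl^{\cu M}(\emptyset)$, which need not hold. So the right approach is: $\bo a \ind[a\omega]_\emptyset \bo a$ holds by definition iff $\mu(\l \acl^{\cu M}(\bo a) \cap \acl^{\cu M}(\bo a) = \acl^{\cu M}(\emptyset)\rr) = 1$, i.e.\ iff $\mu(\l \bo a \in \acl^{\cu M}(\emptyset)\rr) = 1$, i.e.\ iff $\bo a \in \acl^\omega(\emptyset)$. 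But anti-reflexivity would require $\bo a \in \acl^{\cu N}(\emptyset) = \dcl^{\cu N}(\emptyset)$, and by Corollary \ref{c-pointwise-alg-def} the inclusion $\acl(\emptyset) \subseteq \acl^\omega(\emptyset)$ is strict in general — e.g.\ take any $\bo a$ with $a(\omega)$ ranging over a finite $\acl^{\cu M}(\emptyset)$-set nontrivially, using that models of $T$ have size $>1$ and $\cu N$ is saturated. This gives $\bo a \ind[a\omega]_\emptyset \bo a$ with $\bo a \notin \acl^{\cu N}(\emptyset)$.

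For part (ii), I want to show $\ind[a\omega] \ \not\Rightarrow \ind[a]$ for every $T$. The cleanest witness reuses the construction in Proposition \ref{p-no-finite-character}: take $\bo b = \bo 1_{\sa B}$ and the algebra generated by an increasing sequence $\cu A$, with $\bo 1_{\cu A}$ as before. There $\bo 1_{\cu A} \nind[a]_Z \bo b$ was shown, yet the obstruction came entirely from the event sort ($\dcl_\BB(\bo b \bo 1_{\cu A} Z) = \sigma(\cu A)$ forces $\bo b$ into the intersection), not from any pointwise-algebraicity failure. I would check that $\bo 1_{\cu A} \ind[a\omega]_Z \bo b$ holds by verifying, for all countable subsets, that $\mu(\l \bo 1_{\cu A_0} \ind[a]_{Z(\omega) \cup D} \bo b \rr) = 1$: pointwise, $0(\omega), 1(\omega)$ generate only $\acl^{\cu M}$ of a fixed finite set and the indicator functions take values in $\{0,1\}$, so fiberwise there is no algebraic interaction. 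Alternatively — and perhaps more robustly — I would produce a bespoke pair: using Fact \ref{f-neat} and saturation, find $\bo b$ and finite $D$ with $\bo b \in \acl^{\cu N}(\bo b) \cap \acl^{\cu N}(D\bo b) \setminus \acl^{\cu N}(D)$ arising purely from an event-sort definability fact while $\bo b \ind[a\omega]_D \bo b$ holds. The key point to nail down is that $\ind[a\omega] \ $ genuinely ignores the event sort, which is immediate from Fact \ref{f-pointwise-a} since $\l A \ind[a]_C B \rr$ only involves first-order algebraic formulas over $\cu M$.

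For part (iii), one direction is easy: if all models of $T$ are finite, then $M$ is finite, so for any $\bo a \in \cu K$ and any $A$, $a(\omega) \in M = \acl^{\cu M}(\emptyset) \subseteq \acl^{\cu M}(A(\omega))$ always, whence $\acl^{\cu M}(A(\omega)) = M$ for all $\omega$ and every $\ind[a\omega]$-instance holds trivially — so $\ind[a] \Rightarrow \ind[a\omega] \ $ (indeed anything implies $\ind[a\omega] \ $). For the converse, assume $T$ has an infinite model, so $\cu M$ is infinite. I would pick $\bo a \in \cu K$ with $a$ an injective-on-a-positive-measure-set function into $M$ (available by atomlessness of $\mu$ and saturation, e.g.\ realizing types that pin $a(\omega)$ to distinct non-algebraic elements over $\emptyset$ on a positive-measure event). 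Then $\bo a \ind[a]_\emptyset \bo a$ can be arranged to fail or — better — I realize we want an instance of $\ind[a]$ that is NOT an instance of $\ind[a\omega]$. So instead: take $\bo a \in \acl^\omega(\emptyset) \setminus \acl^{\cu N}(\emptyset)$ as in part (i) (this needs $|M| > 1$, which holds) — then $\bo a \ind[a\omega]_\emptyset \bo a$ but to get a FAILURE of the implication I need $\bo a' \ind[a]_\emptyset \bo a''$ without pointwise independence; the right move is to take $\bo a$ with $a(\omega)$ non-algebraic over $\emptyset$ on a positive-measure set $\sa E$, set $\bo a' = \bo a$, and check $\bo a \ind[a]_\emptyset \bo a$ fails iff $\bo a \notin \acl^{\cu N}(\emptyset)$ — but then $\bo a \nind[a]_\emptyset \bo a$, useless. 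The correct witness: let $\bo a, \bo b$ be such that on $\sa E$ of positive measure $b(\omega) \in \acl^{\cu M}(a(\omega)) \setminus \{a(\omega)\}$ (possible since $\cu M$ is infinite with size $>1$ only if $T$ has algebraicity — if not, $\acl = \dcl$ in $T$ and then $a(\omega)$ always determines $b(\omega)$, and one instead uses that $\dcl^\omega \neq \dcl^{\cu N}$). I expect this converse to be the \textbf{main obstacle}: it splits into the case where $T$ has genuine algebraicity (use an $\acl$-but-not-$\dcl$ pair fiberwise, glued via Fact \ref{f-glue} so that globally $\ind[a]$ holds while $\ind[a\omega] \ $ fails on the positive-measure event) and the case where $T$ has $\acl = \dcl$ but infinite models (use two elements $\bo a, \bo b$ with $\dcl^{\cu N}(\bo a) = \dcl^{\cu N}(\bo b) = \dcl^{\cu N}(\emptyset)$ globally — arranged by a gluing that mixes two independent "random constants" — so $\bo a \ind[a]_\emptyset \bo b$, yet fiberwise $a(\omega), b(\omega)$ are equal on a positive-measure set and distinct elsewhere, breaking $\mu(\l \bo a \ind[a]_\emptyset \bo b \rr) = 1$). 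I would carry out the algebraicity case explicitly and reduce the $\acl=\dcl$ case to it or handle it by the two-constants construction, checking measurability of the relevant events via Lemma \ref{l-a-measurable}.
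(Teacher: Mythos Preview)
Your approach to (i) has a gap: working over the empty base requires $|\acl^{\cu M}(\emptyset)|\ge 2$, which can fail (e.g.\ $T$ the theory of an infinite pure set, where $\acl^{\cu M}(\emptyset)=\emptyset$ and hence $\acl^\omega(\emptyset)=\acl(\emptyset)$ in $\cu N$). The paper fixes this by working over the pair $Z=\{\bo 0,\bo 1\}$ from Lemma~\ref{l-char-function}: take any event $\sa D$ with $\mu(\sa D)=1/2$ and set $\bo a=\bo 1_{\sa D}$. Then pointwise $a(\omega)\in\{0(\omega),1(\omega)\}\subseteq\acl^{\cu M}(Z(\omega))$, so $\bo a\ind[a\omega]_Z\ \bo a$, while $\sa D\notin\dcl_\BB(Z)=\{\top,\bot\}$ gives $\bo a\notin\acl(Z)$. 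This single example also dispatches (ii) immediately, so your detour through the finite-character construction---though it does work---is unnecessary.

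The serious gap is in the hard direction of (iii). Your case split on whether $T$ has $\acl=\dcl$ is not needed, and neither branch is carried through: in the $\acl=\dcl$ branch the condition ``$\dcl^{\cu N}(\bo a)=\dcl^{\cu N}(\emptyset)$'' would force $\bo a\in\dcl(\emptyset)$, collapsing the example, and the other branch presupposes nontrivial algebraicity in $T$. The paper's uniform construction is this: using full existence for $\ind[a]$ in $\cu M$, choose $0,1,a,b\in M$ with $0\ne 1$, $a\notin\acl^{\cu M}(01)$, $\tp(a/\acl^{\cu M}(01))=\tp(b/\acl^{\cu M}(01))$, and $a\ind[a]_{01}b$. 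Embed $M$ into $\cu K$ as ``constant functions'' $x\mapsto\ti x$; the crucial point is that $\acl^{\cu N}(\ti a)=\dcl^{\cu N}(\ti a)=\fo(\ti a)\subseteq\ti M$, i.e.\ definable closure of a constant consists only of constants. Now glue: on an event $\sa E$ of measure $1/2$ let $\bo c$ agree with $\ti 1$, $\bo d$ with $\ti b$; on $\neg\sa E$ let $\bo c$ agree with $\ti 0$, $\bo d$ with $\ti a$. Then $\ti a\nind[a\omega]_{\ti 0\ti 1}\ \bo c\bo d$ since $d(\omega)=\ti a(\omega)\notin\acl^{\cu M}(01)$ on $\neg\sa E$. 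But $\ti a\ind[a]_{\ti 0\ti 1}\bo c\bo d$: any $\bo x\in\acl(\ti a)\cap\acl(\bo c\bo d)$ is a constant $\ti z$ with $z\in\dcl^{\cu M}(a)$, and by pointwise definability over $\bo c\bo d$ on $\sa E$ also $z\in\dcl^{\cu M}(b)$, whence $z\in\acl^{\cu M}(01)$ by $a\ind[a]_{01}b$. You are missing this ``constants have constant definable closure'' mechanism, which is what lets the $\sa E$-half certify global $\ind[a]$ while the $\neg\sa E$-half breaks $\ind[a\omega]$.
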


\begin{proof}  (i) and (ii) Let $Z=\{\bo 0, \bo 1\}$ be as in Lemma \ref{l-char-function}.  Let $\sa D$ be a set in $\cu F$
of measure $\mu(\sa D)=1/2$, and let $\bo 1_{\sa D}$ agree with $\bo 1$ on $\sa D$ and agree with $\bo 0$ on $\neg \sa D$.  
Then $\bo 1_{\sa D}\ind[a\omega]_Z \ \bo 1_{\sa D}$, but $\bo 1_{\sa D}\notin\acl(Z).$  Therefore $\bo 1_{\sa D}\nind[a]_Z \bo 1_{\sa D}$
and $\ind[a]$ is not anti-reflexive.

(iii)  If $\cu M$ is finite, then $\acl^{\cu M}(\emptyset)=M$, so $A\ind[a]_C B$ always holds in $\cu M$.  Therefore $A\ind[a\omega]_C \ B \ $ always holds in $\cu N$,
and hence $\ind[a]\Rightarrow\ind[a\omega] \ \ $ holds in $\cu N$.

For the other direction, assume $\cu M$ is infinite.  By saturation of $\cu M$, there exist elements $0,1,a,b\in M$ such that
$$ 0\ne 1,\quad a\notin\acl^{\cu M}(01), \quad\tp(a/\acl^{\cu M}(01))=\tp(b/\acl^{\cu M}(01)), \quad a\ind[a]_{01} \, b.$$
By a routine transfinite induction using Fact \ref{f-neat} and the saturation of $\cu N$, there is a mapping $a\mapsto\ti a$ from $M$ into $\cu K$ such that
for each tuple $a_0,a_1,\ldots$ in $M$ and formula $\varphi(v_0,v_1,\ldots)$ of $L$, if $\cu M\models \varphi(a_0,a_1,\ldots)$ then
$\mu(\l\varphi(\ti a_0,\ti a_1,\ldots)\rr)=1$ in $\cu N$.  Let $\ti M=\{\ti a\mid a\in M\}.$

To simplify notation, suppose first that $T$ already has a constant symbol for each element of $\acl(01)$.  Then
$\acl^{\cu M}(01)=\acl^{\cu M}(\emptyset)$, so
$$ 0\ne 1, \quad a\notin\acl^{\cu M}(\emptyset),\quad\tp(a)=\tp(b), \quad a\ind[a]_{\emptyset} \, b \quad\mbox{ in } \cu M,$$
$$ \mu(\l {\ti 0}\ne {\ti 1}\rr)=1,\quad {\ti a}\notin\dcl(\emptyset),\quad\tp({\ti a})=\tp({\ti b}), \quad {\ti a}\ind[a]_{\emptyset} \, {\ti b} \quad\mbox{ in } \cu N.$$
By Results \ref{f-acl=dcl} and \ref{f-separable}, for each $A\subseteq \ti M$,
$$\acl(A)=\dcl(A)=\cl(\fo(A))=\fo(A)\subseteq \ti M.$$

Let $\sa E\in\cu E$ be an event of measure $\mu(\sa E)=1/2$.
Let $ \bo c$ agree with $\ti 1$ on $\sa E$ and $\ti 0$ on $\neg\sa E.$ Let $\bo d$ agree with $\ti a$ on $\neg \sa E$ and with $\ti b$ on $\sa E$
(see the figure).
\begin{center}
\setlength{\unitlength}{1.5mm}
\begin{picture}(80,20)
\put(10,0){\line(0,1){20}}
\put(30,0){\line(0,1){20}}
\put(50,0){\line(0,1){20}}
\put(70,0){\line(0,1){20}}
\put(10,0){\line(1,1){20}}
\put(50,0){\line(1,1){20}}

\put(0,3){\makebox(0,0){$\neg\sa E$}}
\put(0,17){\makebox(0,0){$\sa E$}}
\put(9,10){\makebox(0,0){$\ti 0$}}
\put(29,10){\makebox(0,0){$\ti 1$}}
\put(49,10){\makebox(0,0){$\ti a$}}
\put(69,10){\makebox(0,0){$\ti b$}}
\put(18,10){\makebox(0,0){$\bo c$}}
\put(58,10){\makebox(0,0){$\bo d$}}
\end{picture}
\end{center}

\emph{Claim 1}: $\ti a\ind[a]_{\emptyset} \bo{cd} $ in $\cu N$.

\emph{Proof of Claim 1}:  Suppose $ x\in\acl({\ti a})\cap\acl(\bo{cd})$ in $\cu N$.  Then $x\in \dcl(\ti a)$,
so $x=\ti z$ for some $z\in\dcl^{\cu M}(a)$.
Moreover, $x\in\dcl(\bo c\bo d)$, so $x\in\dcl^\omega(\bo c\bo d)$, and hence $x(\omega)\in\dcl^{\cu M}(1b)=\dcl^{\cu M}(b)$ for all $\omega\in\sa E$.
Therefore $z\in\dcl^{\cu M}(b)$.  Since ${\ti a}\ind[a]_{\emptyset} \, {\ti b}$ in $\cu N$, we have
$x\in\acl(\ti a)\cap\acl(\ti b)=\acl(\emptyset)$.
\medskip

\emph{Claim 2}: $\ti a\nind[a\omega]_{\emptyset} \ \bo{cd} $ in $\cu N$.

\emph{Proof of Claim 2}:
For all $\omega\in\neg\sa E$ we have $\bo d(\omega)=\ti a(\omega)$, so
$$\ti a(\omega)\in\acl^{\cu M}(\ti a(\omega))\cap \acl^{\cu M}(\bo{cd}(\omega))\setminus \acl^{\cu M}(\emptyset),$$
and hence $\omega\notin\l \ti a\ind[a]_\emptyset \bo c\bo d\rr$.
Therefore $\mu(\l \ti a\ind[a]_\emptyset\bo c\bo d\rr)\le 1/2$, so $\ti a\nind[a\omega]_{\emptyset} \ \bo{cd} $.

By Claims 1 and 2, $\ind[a]\Rightarrow\ind[a\omega] \ \ $ fails in $\cu N$.

We now turn to the general case where $T$ need not have a constant symbol for each element of $\acl(01)$.
Our argument above shows that $\ti a\ind[a]_{\ti 0 \ti 1} \bo{cd}$ but $\ti a\nind[a\omega]_{\ti 0 \ti 1} \ \bo{cd}$ in $\cu N$, so
$\ind[a]\Rightarrow\ind[a\omega] \ \ $ still fails in $\cu N$.
\end{proof}

\section*{References}


\vspace{2mm}

[Ad1]  Hans Adler. Explanation of Independence.  PH. D. Thesis, Freiburg, AxXiv:0511616 (2005).

[Ad2]  Hans Adler.  A Geometric Introduction to Forking and Thorn-forking.  J. Math. Logic 9 (2009), 1-21.

[AGK1]  Uri Andrews, Isaac Goldbring, and H. Jerome Keisler.  Definable Closure in Randomizations.
To appear, Annals of Pure and Applied Logic.  Available online at www.math.wisc.edu/$\sim$Keisler.

[AGK2]  Uri Andrews, Isaac Goldbring, and H. Jerome Keisler.   Randomizing o-minimal Theories.  Submitted.  Available online at www.math.wisc.edu/$\sim$Keisler.

[Be]  Ita\"i{} Ben Yaacov.  On Theories of Random Variables.  Israel J. Math 194 (2013), 957-1012.

[BBHU]  Ita\"i{} Ben Yaacov, Alexander Berenstein,
C. Ward Henson and Alexander Usvyatsov. Model Theory for Metric Structures.  In Model Theory with Applications to Algebra and Analysis, vol. 2,
London Math. Society Lecture Note Series, vol. 350 (2008), 315-427.

[BK] Ita\"i{} Ben Yaacov and H. Jerome Keisler. Randomizations of Models as Metric Structures.
Confluentes Mathematici 1 (2009), pp. 197-223.

[BU] Ita\"i{} Ben Yaacov and Alexander Usvyatsov. Continuous first order logic and local stability. Transactions of the American
Mathematical Society 362 (2010), no. 10, 5213-5259.

[CK]  C.C.Chang and H. Jerome Keisler.  Model Theory.  Dover 2012.

[EG]  Clifton Ealy and Isaac Goldbring.  Thorn-Forking in Continuous Logic.  Journal of Symbolic Logic 77 (2012), 63-93.

%
%




[GL]  Rami Grossberg and Olivier Lessman.  Dependence Relation in Pregeometries. Algebra Universalis 44 (2000), 199-216.


[Ke] H. Jerome Keisler.  Randomizing a Model.  Advances in Math 143 (1999), 124-158.




\end{document}